\documentclass{amsart}
%
\usepackage{amsmath}
\usepackage{amssymb}
\usepackage{amsthm}
\usepackage{dsfont}
\usepackage{lineno}
\usepackage{subfigure}
\usepackage{graphicx}
\usepackage{multirow}
\usepackage{color}
\usepackage{xcolor}
\usepackage{xspace}
\usepackage{pdfsync}
\usepackage{hyperref} 
\usepackage{algorithmicx}
\usepackage{algpseudocode}
\usepackage{algorithm}
\usepackage{mathtools}
\usepackage{enumitem}
\usepackage[normalem]{ulem} 

\graphicspath{{Figures/}}



\newcommand{\bq}{\begin{equation}}
\newcommand{\eq}{\end{equation}}

\algnewcommand{\LineComment}[1]{\State \(\triangleright\) #1}

\newtheorem{theorem}{Theorem}

\theoremstyle{lemma}
\newtheorem{lemma}[theorem]{Lemma}

\newtheorem{corollary}[theorem]{Corollary}

\newtheorem{definition}[theorem]{Definition}

\newtheorem*{remarkun}{Remark}

\theoremstyle{remark}

\makeatletter
\newcommand\appendix@section[1]{%
\refstepcounter{section}%
\orig@section*{Appendix \@Alph\c@section: #1}%
}
\let\orig@section\section
\g@addto@macro\appendix{\let\section\appendix@section}
\makeatother



\begin{document}

\title[A Volumetric Approach to Monge's Optimal Transport on Surfaces]{A Volumetric Approach to Monge's Optimal Transport on Surfaces}

\author{Richard Tsai}
\address{Department of Mathematics, University of Texas at Austin, Austin, TX, 78712}
\email{ytsai@math.utexas.edu}

\author{Axel G. R. Turnquist}
\address{Department of Mathematics, University of Texas at Austin, Austin, TX, 78712}
\email{agrt@utexas.edu}

\begin{abstract}
We propose a volumetric formulation for computing the Optimal Transport problem defined on surfaces in $\mathbb{R}^3$, found in disciplines like optics, computer graphics, and computational methodologies. Instead of directly tackling the original problem on the surface, we define a new  Optimal Transport problem on a thin tubular region, $T_{\epsilon}$, adjacent to the surface. This extension offers enhanced flexibility and simplicity for numerical discretization on Cartesian grids. 
The Optimal Transport mapping and potential function computed on $T_{\epsilon}$ are consistent with the original problem on surfaces. We demonstrate that, with the proposed volumetric approach, it is possible to use simple and straightforward numerical methods to solve  Optimal Transport for $\Gamma = \mathbb{S}^2$ and the $2$-torus.
\end{abstract}

\date{\today}    
\maketitle

\section{Introduction}\label{sec:introduction}

We consider computational Optimal Transport problems on smooth hypersurfaces in $\mathbb{R}^3$, with the metric induced by the Euclidean distance in $\mathbb{R}^3$.
Our objective is to derive an Optimal Transport problem and solve the corresponding Monge-Ampère-like Optimal Transport partial differential equations in a small neighborhood around the hypersurface. This volumetric approach brings more flexibility in dealing with surfaces and covariant derivatives.

In recent years, there has been much interest in finding solving Optimal Transport problems on Riemannian manifolds, mostly motivated from applications. These applications can roughly be divided into two categories, the first where the Optimal Transport problem is derived from first principles, and the second where Optimal Transport is used in an \textit{ad hoc} way as a powerful tool, usually in geometric and statistical analysis.

In freeform optics, a typical goal is to solve for the shape of reflectors or lenses that take a source light intensity to a desired target intensity pattern. The Optimal Transport partial differential equations arise as a consequence of Snell's law, the optical setup, and the conservation of light intensity. These formulations are inverse problems in which the potential function of Optimal Transport is directly related to the shapes of lenses or reflectors that redirect source light intensities to required target intensities, see~\cite{YadavThesis, RomijnThesis, RomijnSphere}, which include examples involving Optimal Transport PDE and other example whose formulations are Generated Jacobian Equations. In Section~\ref{sec:examples}, we will perform one example computation for the cost function arising in the reflector antenna problem, see~\cite{Wang_Reflector} and~\cite{Wang_Reflector2}, which is perhaps the most well-known example of such freeform optics problems with an Optimal Transport formulation.

A completely different application arises due to the fact that the solution to the Optimal Transport PDEs, called the potential function, is the Fr\'{e}chet gradient of the Wasserstein distance; see~\cite{santambrogio}. 
The potential function, therefore, has applications in non-parametric methods in inverse problems, where the Wasserstein distance is used as a ``fit" for data, see~\cite{gangbao}, for example.

In the adaptive mesh community, Optimal Transport has been used as a convenient tool for finding a mapping that redistributes mesh node density to a desired target density. The first such methods in the moving mesh methods community were proposed in~\cite{Weller_OTonSphere}. It is also used for the more general problem of diffeomorphic density matching on the sphere, where other approaches such as Optimal Information Transport can be used, see~\cite{modin}. 

In the statistics community, Optimal Transport has been extensively employed in computing the distance, or interpolating between, probability distributions on manifolds, and also in sampling, since solving the Optimal Transport problem allows one to compute a pushforward mapping for measures. In these situations, using Optimal Transport is not the only available computational tool, but has shown to be useful in these communities for its regularity and metric properties, even when source and target probability measures are not smooth or bounded away from zero. 


Many methods in the last ten or so years have been proposed for computing Optimal Transport related quantities on manifolds, such as the Wasserstein distance. Some of the computations were motivated by applications in computer graphics. In~\cite{solomon}, for example, the authors computed the Wasserstein-1 distance on manifolds in order to obtain the geodesic distance on complicated shapes, using a finite-element discretization. In~\cite{solomon2}, the authors used the Benamou-Brenier formulation of Optimal Transport, a continuous ``fluid-mechanics" formulation (as opposed to the static formulation presented here), to compute the Optimal Transport distance, interpolation, mapping, and potential functions on a triangulated approximation of the manifold. However, straightforward implementation using the Brenier-Benamou formulation may suffer from slow convergence. Various authors have successfully employed semi-discrete Optimal Transport methods to Optimal Transport computations on surfaces, where the source measure is a sum of Dirac delta measures and the target measure is continuous. The resulting mapping of such semi-discrete Optimal Transport problems is related to a weighted power diagram of the locations of the source Dirac delta measures. In~\cite{paraboloids}, the authors constructed power diagrams on the sphere to solve the reflector antenna problem, with cost function $c(x,y) = -\log(1-x \cdot y)$. In~\cite{sphericalOT}, the authors utilized a semi-discrete Optimal Transport computation to redistribute the local area of a mesh on the sphere, using the cost function $c(x,y) = \log(x \cdot y)$. In~\cite{simplexsoup}, the authors designed an algorithm to compute the distance between surfaces in $\mathbb{R}^3$ and point clouds, using the cost function $c(x,y) = \left\Vert x - y \right\Vert^2$, where $\left\Vert \cdot \right\Vert$ denotes the Euclidean distance.  

We now briefly review methods for solving Optimal Transport problems on surfaces.
One common approach to developing numerical methods for surface problems starts with triangularization of the manifold. Recently, in the paper~\cite{osher_mfg}, mean-field games were discretized and solved on manifolds using triangularization, of which the Benamou-Brenier formulation of Optimal Transport (using the squared geodesic cost function) is a subcase. One of the great challenges of applying traditional finite element methods to Optimal Transport PDE is the fact that they are not in divergence form. Thus, the analysis becomes very challenging, but, nevertheless, considerable work has been done for this in the Euclidean case with the Monge-Amp\`{e}re PDE, see, for example, the work done in~\cite{Brenner}. It is also not simple to design higher-order schemes, in contrast to simple finite-differences on a Cartesian grid where it is very simple to design high-order discretizations.

There is another general approach to approximating PDE on manifolds, which is done by locally approximating both the manifold and the function to be computed via polynomials using a moving least squares method, originally proposed in~\cite{zhao}. These polynomial approximations are computed in a computational neighborhood. Thus far, these methods have not been applied to solving Optimal Transport PDE on the sphere.


The closest point method was originally proposed in~\cite{ruuth} (see also \cite{macdonald2010}), where the solution of certain PDE are extended to be constant via a closest point map in a small neighborhood of the manifold and all derivatives are then computed using finite differences on a Cartesian grid. However, for our purposes, using the closest point operator for the determinant of a Hessian is rather complicated. Furthermore, simply extending the source and target density functions will not necessarily lead to an Optimal Transport problem on the extended domain and extending the cost function via a closest point extension without introducing a penalty in the normal directions will lead to a degenerate PDE. We have decided instead to extend the source and target density functions and compensating using Jacobian term so they remain density functions and extend the cost function with a penalty term. As we will show in Section~\ref{sec:extension}, these choices will naturally lead to a solution which is constant with respect to the closest point map. Also, we will find in Section~\ref{sec:extension} that our re-formulation leads to a natural new Optimal Transport problem in a tubular neighborhood with natural zero-Neumann boundary conditions which may be of independent interest for the Optimal Transport community.


In this article, instead of solving directly the Monge problem of Optimal Transport defined on $\Gamma$, we propose solving the equivalent Optimal Transport problem on a narrowband $T_{\epsilon}$ around $\Gamma$
for a special class of probability measures in $T_\epsilon$ that is constructed from probability measures on $\Gamma$, and with a class of cost functions that is derived from cost functions on $\Gamma$.
Similar to \cite{martin2020equivalent}, 
 our approach in this paper is to reformulate the variational formulation of the problem on the manifold, which is done in Section~\ref{sec:extension}. 
 It relies on the fact that the extension presented in Section~\ref{sec:extension} is itself an Optimal Transport problem, and so the usual techniques for the PDE formulation of Optimal Transport are used to formulate the PDE on the tubular neighborhood $T_{\epsilon}$. 
We will demonstrate, however, that solving this new Optimal Transport problem in $T_\epsilon$ will not require that we take thickness of the narrowband to zero on the theoretical level. It is a good idea in computation, however, to take the thickness of the narrowband to zero so that the computational complexity of the scheme scales well.
This new Optimal Transport problem is set up carefully with a cost function that penalizes mass transport in the normal direction to the manifold $\Gamma$. 
Because the method in this manuscript allows for great flexibility in the choice of cost function, it can also be employed to solve the reflector antenna problem, which involves finding the shape of a reflector given a source directional light intensity and a desired target directional light intensity. Some methods, such as those developed for the reflector antenna and rely on a direct discretization of the Optimal Transport PDE, include~\cite{Oliker_nearfield, Oliker_SQM, Doskolovich_farfield, GlimmOliker_SingleReflector, Wu_lensdesign, Brix_MAOptics, RomijnSphere, HT_OTonSphere3}. However, these methods (with the exception of~\cite{HT_OTonSphere3}) have been designed solely for the reflector antenna problem; that is, they are restricted to the cost function $c(\mathbf{x},\mathbf{y}) = -\log \left\Vert \mathbf{x} - \mathbf{y} \right\Vert$.

The PDE method proposed in this manuscript can be contrasted with the wide-stencil monotone scheme, that has convergence guarantees, developed in~\cite{HT_OTonSphere, HT_OTonSphere2}, where discretization of the second-directional derivatives was performed on local tangent planes. The Cartesian grid proposed here is much simpler, which makes the discretization of the derivatives much simpler. The greatest benefits of the current proposed scheme over monotone methods are that a wider variety of difficult computational examples are possible to compute in a short amount of time with accelerated convergence techniques, see Algorithm~\ref{alg:solver} which shows that the current implementation uses an accelerated single-step Jacobi method. A possible slowdown that might be expected from extending the discretization to a third dimension is counteracted by the efficiency of the discretization (which does not require computing a large number of derivatives in a search radius, as is done in the monotone scheme in~\cite{HT_OTonSphere2}) and the good performance of the accelerated solvers for more difficult computational examples that allow for much larger time step sizes in practice in the solvers than are typically used in monotone schemes.

It has been shown in~\cite{HL_ThreeDimensions} how to construct wide-stencil monotone finite-difference discretizations, for regions in $\mathbb{R}^3$, of the Optimal Transport PDE on $T_{\epsilon}$. However, we believe that the convergence guarantees will be outweighed by the computational challenges of requiring a large number of discretization points to resolve $T_{\epsilon}$ especially for relatively small $\epsilon$. Also, in~\cite{HL_ThreeDimensions}, it was shown, for technical reasons, that discretization points had to remain a certain distance away from the boundary. We argue that in a region like $T_{\epsilon}$ where most points are close to the boundary this is too restrictive of a choice. The work in~\cite{HL_ThreeDimensions} was, however, for more general Optimal Transport problems on regions in $\mathbb{R}^3$ and it may be possible to construct monotone discretizations in $T_{\epsilon}$ by exploiting the symmetry of our Optimal Transport problem in $T_{\epsilon}$, due to the fact that the solution is constant in the normal direction, but we defer a detailed discussion of this point to future work.

The method proposed in this manuscript is a direct discretization of the full Optimal Transport PDE using a grid that is generated from a Cartesian cube of evenly spaced points, with computational stencils formed from the nearest surrounding points. This can be contrasted with the wide-stencil schemes in~\cite{HT_OTonSphere2} and the geometric methods in~\cite{Weller_OTonSphere}, one of the earliest methods proposed for solving the Optimal Transport problem on the sphere with squared geodesic cost. Although the computational methods in~\cite{Weller_OTonSphere} perform well, many properties of the discretizations were informed by trial and error.

In much of the applied Optimal Transport literature, the fastest method known for computing an approximation of the Optimal Transport distance is achieved by entropically regularizing the Optimal Transport problem and then using the Sinkhorn algorithm, originally proposed in~\cite{cuturi}. If one wishes to compute an approximation of the distance between two probability measures, then Sinkhorn is the current state of the art. However, it is unclear from the transference plane one obtains from the entropically regularized problem how to extract the Optimal Transport mapping and the potential function. Nevertheless, one can entropically regularize our extended Optimal Transport problem on $T_{\epsilon}$ and run the Sinkhorn algorithm to efficiently compute an approximation of the Wasserstein distance between two probability distributions. For our proposed extension, the Wasserstein distance (between two probability distributions) for the Optimal Transport problem on $\Gamma$  will be equal to the Wasserstein distance between the extended probability distributions on $T_{\epsilon}$. Moreover, using the divergence as a stopping criterion defined in~\cite{cuturi}, after a brief investigation, we found that the Sinkhorn algorithm requires approximately the same number of iterations to reach a given tolerance (of the divergence) for the Optimal Transport problem on $\Gamma$ as for the Optimal Transport problem on $T_{\epsilon}$.



In Section~\ref{sec:background}, we review the relevant background for the PDE formulation of Optimal Transport on manifolds. We then introduce the Monge problem of Optimal Transport and then characterize the minimizer as a mass-preserving map that arises from a $c$-convex potential function. In Section~\ref{sec:extension}, we set up the new Optimal Transport problem carefully and prove how the map from of the original Optimal Transport problem on $\Gamma$ can be extracted from the map from the new Optimal Transport problem on $T_{\epsilon}$. The PDE formulation of the Optimal Transport problem on $T_{\epsilon}$ also naturally comes with Neumann boundary conditions on $T_{\epsilon}$. In Section~\ref{sec:examples}, we detail how we construct a discretization of the Optimal Transport PDE on $T_{\epsilon}$ by using a Cartesian grid, and then show how we solve the resulting system of nonlinear equations. Instead of discretizing the zero-Neumann conditions, we will instead use the fact that the solution of the PDE on $T_{\epsilon}$ is constant in the direction normal to $\Gamma$ to extrapolate. Then, we run sample computations on the sphere and on the $2$-torus for different cost functions and run some tests where we vary $\epsilon$ and our discretization parameter $h$. In Section~\ref{sec:conclusion}, we recap how the reformulation has allowed us to design a simple discretization of the Optimal Transport PDE.

\section{Background}\label{sec:background}
Throughout this paper, unless otherwise stated, we assume that $\Gamma\subset\mathbb{R}^3$ is a closed, smooth, and simply connected surface. Let $dS(\mathbf{x})$ denote the surface area element at $\mathbf{x}\in\Gamma$, induced from $\mathbb{R}^3$.
We consider probability measures of the form
\( d\mu(\mathbf{x}) = \rho(\mathbf{x})dS(\mathbf{x}) \). 
The mapping
\( \boldsymbol{\xi}:\Gamma \rightarrow \Gamma \)
is said to push forward the probability measure $\mu$ onto $\nu$ if $\nu=\mu\circ \boldsymbol{\xi}^{-1}$.
This action is denoted by \(\boldsymbol{\xi}_{\#} \mu = \nu \).


Given ``source" and ``target" probability measures, $\mu$ and $\nu$, and a cost function $c: \Gamma \times \Gamma \rightarrow \mathbb{R}$, the Monge problem on $\Gamma$ is to find a map $\boldsymbol{\xi}$, 
satisfying $\boldsymbol{\xi}_{\#}\mu = \nu$, that minimizes the cost functional
\begin{equation}\label{eq:orig_cost_fncl}
\mathcal{C}(\boldsymbol{\xi}) := \int_{\Gamma} c(\mathbf{x},\boldsymbol{\xi}(\mathbf{x})) d\mu(\mathbf{x}).
\end{equation}
The existence of a minimizer for this optimization problem is guaranteed when 
the probability measures admit densities on $\Gamma$ and $c$ is lower-semicontinuous; see~\cite{santambrogio}. 

We will consider probability measures admitting smooth density functions bounded away from zero:
\begin{equation}\label{eq:Theta_Gamma}
    \Theta_{\Gamma} := \left\{ \rho(\mathbf{x}) \in C^{\infty}: \int_{\Gamma}\rho dS = 1, \ \rho(\mathbf{x})>0, \forall \mathbf{x} \in \Gamma \right\}.
\end{equation}

Assuming that the cost function $c$ satisfies the so-called MTW conditions~\cite{MTW}, Monge's problem on $\Gamma$ has a unique smooth solution $\mathbf{m}$, called the Optimal Transport mapping:
\begin{equation}\label{eq:OT}
\mathbf{m} = \text{argmin}_{\boldsymbol{\xi}_{\#}\mu = \nu} 
\mathcal{C}(\boldsymbol{\xi}).
\end{equation}

In~\cite{mccann}, it was shown that when the cost function is the squared geodesic cost $c(x,y) = \frac{1}{2}d_{\Gamma}(x,y)^2$, the Optimal Transport mapping takes the simple form
\begin{equation}
    \mathbf{m}(\mathbf{x})=\exp_\mathbf{x}(\nabla u(\mathbf{x})),
\end{equation}
where $u$ is a special scalar function, called the potential function, and $\exp_\mathbf{x}(\nabla u(\mathbf{x}))$, the exponential map,  denotes the point on $\Gamma$
connected to the geodesic emanating from $\mathbf{x}$ in the direction of $\nabla u(\mathbf{x})$ for a length equal to $|\nabla u(\mathbf{x})|$ (assuming $\Gamma$ is such that such a geodesic exists and is unique). 

The uniqueness of minimizer $\mathbf{m}$ of Equation~\eqref{eq:OT} can be characterized by the Optimall Transport problem's potential function being \emph{c-convex}, with $c$ associated with the cost function of the problem.
\begin{definition}
The $c$-transform of a function $u:\Gamma \rightarrow \mathbb{R}$, which is denoted by $u^{c}$ is defined as:
\begin{equation}
    u^{c}(\mathbf{y}) = \sup_{\mathbf{x} \in \Gamma} \left( -c(\mathbf{x},\mathbf{y}) - u(\mathbf{x}) \right).
\end{equation}
\end{definition}
\begin{definition}\label{def:cconvex}
A function $u$ is $c$-convex if at each point $\mathbf{x} \in \Gamma$, there exists a $\mathbf{y} \in \Gamma$ such that 
\begin{equation}
\begin{cases}
-u^{c}(\mathbf{y}) - c(\mathbf{x},\mathbf{y}) = u(\mathbf{x}), \\
-u^{c}(\mathbf{y}) - c(\mathbf{x}', \mathbf{y}) \leq u(\mathbf{x}'), \ \forall \mathbf{x}' \in \Gamma,
\end{cases}
\end{equation}
where $u^{c}(\mathbf{y})$ is the $c$-transform of $u$.
\end{definition}



Let $u\in C^1(\Gamma;\mathbb{R})$ and $c$-convex, we define implicitly a mapping $\tilde{\mathbf{m}}$ as the solution to the equation:
\begin{equation}\label{eq:relation}
\nabla_{\Gamma} u (\mathbf{x})=-\nabla_{\mathbf{x}, \Gamma} c(\mathbf{x},\tilde{\mathbf{m}}),~~~\forall\mathbf{x}\in\Gamma,
\end{equation}
where the gradients $\nabla_{\Gamma}$ are taken with respect to the metric on $\Gamma$, see~\cite{LoeperReg}. 
If such a mapping $\tilde{\mathbf{m}}$ also satisfies $\tilde{\mathbf{m}}_{\#}\mu = \nu$, then it is exactly the unique solution of the Optimal Transport problem in Equation~\eqref{eq:OT}.

The preceding discussion is summarized in Theorem 2.7 from~\cite{LoeperReg}:
\begin{theorem}\label{thm:uniqueness}
The Monge problem in Equation~\eqref{eq:OT} with smooth cost function $c(\mathbf{x},\mathbf{y})$ satisfying the MTW conditions (see~\cite{MTW}) and source and target probability measures $\mu$ and $\nu$, respectively, where $\mu$ and $\nu$ have density functions $f,g\in \Theta_{\Gamma}$, respectively, has a solution which is a mapping $\mathbf{m}$ iff $\mathbf{m}$ satisfies both $\mathbf{m}_{\#} \mu = \nu$ and is uniquely solvable via Equation~\eqref{eq:relation}, where $u$ is a $c$-convex function.
\end{theorem}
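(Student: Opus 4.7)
The plan is to prove the two implications separately, following the classical Brenier--McCann--Loeper framework adapted to the curved setting of $\Gamma$, using Kantorovich duality for existence of a potential and the Ma--Trudinger--Wang regularity theory to upgrade this potential to a smooth $c$-convex function.

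For the direction that a Monge minimizer $\mathbf{m}$ must admit the $c$-convex potential representation, I would first pass to the Kantorovich relaxation over probability measures on $\Gamma\times\Gamma$ with marginals $\mu$ and $\nu$; this relaxed problem admits a minimizing plan by weak-$\ast$ compactness since $c$ is continuous and $\Gamma$ is compact. Kantorovich duality then yields a $c$-convex function $u$ and its $c$-transform $u^c$ satisfying
\begin{equation*}
u(\mathbf{x}) + u^c(\mathbf{y}) + c(\mathbf{x},\mathbf{y}) \geq 0
\end{equation*}
pointwise on $\Gamma\times\Gamma$, with equality on the support of the optimal plan. The MTW assumption together with $f,g\in\Theta_\Gamma$ allows one to invoke Loeper's interior regularity theorem to conclude $u\in C^2(\Gamma)$, so that the supporting set for the dual inequality becomes the graph of a continuous map. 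Differentiating the equality in $\mathbf{x}$ recovers Equation~\eqref{eq:relation}, and the MTW twist condition (a consequence of (A1) in~\cite{MTW}) makes this equation uniquely solvable for $\mathbf{m}(\mathbf{x})\in\Gamma$. The pushforward condition $\mathbf{m}_{\#}\mu=\nu$ is then inherited from the marginals of the optimal plan.

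For the converse, assume $\tilde{\mathbf{m}}$ is implicitly defined by Equation~\eqref{eq:relation} from a $c$-convex $u$ and satisfies $\tilde{\mathbf{m}}_{\#}\mu=\nu$. I would use the $c$-convexity of $u$ in the form of Definition~\ref{def:cconvex}: for any measurable competitor $\boldsymbol{\xi}$ with $\boldsymbol{\xi}_{\#}\mu=\nu$,
\begin{equation*}
-u^{c}(\boldsymbol{\xi}(\mathbf{x})) - c(\mathbf{x},\boldsymbol{\xi}(\mathbf{x})) \leq u(\mathbf{x}) = -u^{c}(\tilde{\mathbf{m}}(\mathbf{x})) - c(\mathbf{x},\tilde{\mathbf{m}}(\mathbf{x})),
\end{equation*}
where the right-hand equality encodes that $\tilde{\mathbf{m}}(\mathbf{x})$ realizes the $c$-convex representation at $\mathbf{x}$, again by the MTW twist. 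Integrating against $\mu$ and using the two pushforward identities to rewrite $\int u^{c}(\boldsymbol{\xi}(\mathbf{x}))\,d\mu = \int u^{c}\,d\nu = \int u^{c}(\tilde{\mathbf{m}}(\mathbf{x}))\,d\mu$ cancels the $u^c$ contributions and gives $\mathcal{C}(\tilde{\mathbf{m}})\leq \mathcal{C}(\boldsymbol{\xi})$. Uniqueness then follows by observing that under MTW the above inequality is strict off the graph of $\tilde{\mathbf{m}}$, so any other minimizer must coincide with $\tilde{\mathbf{m}}$ $\mu$-a.e.

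The main obstacle is the regularity step in the forward direction: upgrading the abstract $c$-convex dual potential produced by Kantorovich duality into a genuine $C^2$ function so that Equation~\eqref{eq:relation} holds pointwise on $\Gamma$ and not merely $\mu$-almost everywhere. This is the heart of the Ma--Trudinger--Wang/Loeper program, where positivity of the MTW cross-curvature tensor is used to rule out the formation of singularities in the $c$-convex envelope and to propagate initial interior regularity through the manifold. Once this regularity is in hand, the remaining arguments are essentially formal manipulations of the $c$-convex duality.
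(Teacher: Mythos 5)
The paper does not prove this theorem itself; immediately before the statement it says ``The preceding discussion is summarized in Theorem 2.7 from~\cite{LoeperReg}'' and simply cites Loeper's result, offering only a one-line remark afterward about uniqueness of $u$ up to a constant. So there is no in-paper proof to compare against, and the relevant question is whether your reconstruction is sound and consistent with what the citation presupposes.

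Your argument is the standard Brenier--McCann--Loeper route and is essentially correct: Kantorovich duality plus compactness of $\Gamma$ produces a $c$-convex potential; MTW regularity theory with $f,g\in\Theta_\Gamma$ upgrades $u$ to $C^2$ so Equation~\eqref{eq:relation} holds pointwise; the twist condition pins down $\mathbf{m}(\mathbf{x})$ uniquely; and for the converse, the $c$-convexity inequality integrated against $\mu$, together with the change of variables $\int u^c(\boldsymbol{\xi}(\mathbf{x}))\,d\mu=\int u^c\,d\nu$ valid for any admissible competitor, gives optimality of $\tilde{\mathbf{m}}$. Two small points worth tightening: (i) the twist condition \emph{is} MTW assumption (A1), not a consequence of it, and (ii) in the converse direction, strictness of the inequality off the graph of $\tilde{\mathbf{m}}$ is best argued from the fact that under (A1) the $c$-subdifferential of a $c$-convex function is single-valued $\mu$-a.e.\ (since $\mu$ has a density), rather than appealed to informally. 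Neither affects the substance; your proof matches the framework the paper is implicitly invoking via the Loeper citation.
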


\begin{remarkun}
Such a $c$-convex potential function $u$ will be unique up to an additive constant. The gradient $\nabla_{\Gamma}u$, however, is unique.
\end{remarkun}

Assuming the MTW conditions for the cost function $c$, and source and target probability measures in $\Theta_\Gamma$,  $u$ and $\mathbf{m}$ are smooth classical solutions of the following equations:
\begin{equation}\label{eq:OTGamma1}
    \det(D_{\Gamma}^2 u(\mathbf{x}) + D^2_{\mathbf{x}\mathbf{x}, \Gamma}c(\mathbf{x},\mathbf{y})) = \left\vert \det D^2_{\mathbf{x}\mathbf{y}, \Gamma} c(\mathbf{x},\mathbf{m}(\mathbf{x})) \right\vert f(\mathbf{x})/ g(\mathbf{m}(\mathbf{x})),
\end{equation}
\begin{equation}\label{eq:OTGamma2}
    \nabla_{\Gamma} u(\mathbf{x}) = -\nabla_{\mathbf{x}, \Gamma} c(\mathbf{x},\mathbf{m}(\mathbf{x})).
\end{equation}
Here the derivatives $D_{\Gamma}$ are taken on the surface with respect to the induced metric on $\Gamma$.
We will refer to \eqref{eq:OTGamma1}-\eqref{eq:OTGamma2} as the Optimal Transport PDEs.

In the next Section, we will derive a special extension of the optimal transport problem \eqref{eq:OT} defined on $\Gamma$, for densities in \eqref{eq:Theta_Gamma}, to one defined in a tubular neighborhood (narrowband) $T_{\epsilon}$ of $\Gamma$ for a class of probability measures judiciously extended from
\eqref{eq:Theta_Gamma}. The plan is to solve, in $T_\epsilon$, the Optimal Transport PDEs \eqref{eq:OTExtension}-\eqref{eq:bc}, corresponding to the extended optimal transport problem. The PDEs in $T_{\epsilon}$ will be defined with
partial derivatives of the potential functions with respect to the standard basis in the ambient Euclidean space.
This approach will allow one to work with surface problems without the need to work with surface parameterization and to enjoy the benefit of choosing a mature numerical method for discretizing the Monge-Ampère-like Optimal Transport PDEs.

\section{Volumetric Extension of the Optimal Transport Problem}\label{sec:extension}
In this section, we define a special Optimal Transport problem on a tubular neighborhood of $\Gamma$ which is equivalent in a suitable sense to the 
given Optimal Transport problem defined on a closed smooth surface $\Gamma \subset \mathbb{R}^3$.
The Optimal Transport mapping from this extended Optimal Transport problem will be shown to have a natural connection with the Optimal Transport mapping on $\Gamma$. 
Working in a tubular neighborhood of the surface allows for flexible meshing and use of standard discretization methods for the differential and integral operators in the extended Euclidean domain.
The proposed approach follows the strategy developed in \cite{tsai} and \cite{martin2020equivalent}. 

We formulate the Optimal Transport mapping for the Monge Optimal Transport problem on $T_{\epsilon}$ as 
\begin{equation}\label{eq:OT2}
\overline{\mathbf{m}} = \text{argmin}_{\boldsymbol{\xi}_{\#}\overline{\mu} = \overline{\nu}} \int_{T_{\epsilon}} c_{\sigma}\left(\mathbf{z}',\boldsymbol{\xi}(\mathbf{z}') \right) d \overline{\mu}(\mathbf{z}'),
\end{equation}
for a special class of cost function $c_{\sigma}: T_\epsilon\times T_\epsilon\mapsto \mathbb{R}^+$ and
a special class of probabilities $\overline{\mu}$ and $\overline{\nu}$ with density functions $\overline{f}$ and $\overline{g}$. 

Given $f, g \in \Theta_{\Gamma}$ that are source and target densities and $c$ a cost function on $\Gamma$ in the original Optimal transport problem, we will present a particular way of extending $f, g$ and $c$ to $\overline{f}, \overline{g} \in \Theta_{T_{\epsilon}}$ and $c_{\sigma}$ in Section~\ref{sec:densities} and Section~\ref{sec:costs}, respectively.
In Section~\ref{sec:maintheorem}, we will show that the Optimal Transport problem in Equation~\eqref{eq:OT2} is ``equivalent" to Equation~\eqref{eq:OT} in a specific sense which will be made clear in Theorem~\ref{thm:maintheorem}.



With the judiciously extended $\overline{f}$, $\overline{g}$ and $c_{\sigma}$, our goal is to solve numerically (up to a constant) the following PDE for the pair $(v, \overline{\mathbf{m}})$: 

\begin{equation}\label{eq:OTExtension}
\begin{aligned}
 \det\left( D^2 v(\mathbf{z}) + D^2_{\mathbf{z}\mathbf{z}} c_{\sigma}(\mathbf{z}, \boldsymbol{\xi}) \right) &= \left\vert \det D^2_{\mathbf{z} \boldsymbol{\xi}} c_{\sigma}(\mathbf{z},\boldsymbol{\xi})\right\vert \overline{f}(\mathbf{z}) / \overline{g} (\boldsymbol{\xi}), \\
 \nabla v(\mathbf{z}) &= -\nabla_{\mathbf{z}} c_{\sigma}(\mathbf{z}, \boldsymbol{\xi}),\text{ for }{\boldsymbol{\xi} = \overline{\mathbf{m}}(\mathbf{z})}, \mathbf{z} \in T_{\epsilon},
 \end{aligned}
\end{equation}
with the Neumann boundary condition
\begin{equation}\label{eq:bc}
    \frac{\partial v(\mathbf{z})}{\partial \mathbf{n}} = 0, \ \ \ \mathbf{z} \in \partial T_{\epsilon}.
\end{equation}

In fact, we will find in Theorem~\ref{sec:maintheorem} that the potential function $v(\mathbf{z})$ satisfies
\begin{equation}
    \frac{\partial v(\mathbf{z})}{\partial \mathbf{n}} = 0, \ \ \ \mathbf{z} \in T_{\epsilon},
\end{equation}
that is, potential function is constant in the normal direction. We will exploit in our numerical solver, although we emphasize that one could impose the Neumann boundary condition in Equation~\eqref{eq:bc} in a numerical implementation.

We remark that all Optimal Transport problems posed on bounded subsets of Euclidean space have a natural global condition, known as the second boundary value problem, see~\cite{Urbas}. The second boundary condition is a global constraint that can be formulated as a global Neumann-type condition; see~\cite{HamfeldtBVP2}, which can be shown to reduce to Equation~\eqref{eq:bc} on the boundary.

\subsection{The general setup}\label{sec:geometricsetup}

 
With a predetermined orientation, we define the signed distance function to $\Gamma$:
\begin{equation}\label{eq:signed-distance-to-Gamma}
    y = \phi_\Gamma(\mathbf{z}):= \text{sgn}(\mathbf{z})\min_{\mathbf{x}\in\Gamma} ||\mathbf{z}-\mathbf{x}||,
\end{equation}
where $\text{sgn}$ corresponds to the orientation. Typically, one choose $\text{sgn}(\mathbf{z})<0$  for $\mathbf{z}$ in the bounded region enclosed by $\Gamma$. 
We denote the $y$-level set of $\phi_{\Gamma}$ by $\Gamma_\eta$; i.e.,
$$\Gamma_{y} = \left\{\mathbf{z} \in \mathbb{R}^3: \phi_{\Gamma}(\mathbf{z}) = y \right\}.$$

Let $\mathcal{C}_\Gamma$ denote the set of points in $\mathbb{R}^3$ which are equidistant to at least two distinct points on $\Gamma$. The reach of $\Gamma$ is an extrinsic property defined as
\begin{equation}\label{eq:reach}
\tau_\Gamma := \inf_{{\mathbf{x}}\in \Gamma,\, {\mathbf{z}}\in\mathcal{C}_\Gamma} ||{\mathbf{x}}-\mathbf{z}||.
\end{equation}
Note that by definition, $\tau_\Gamma \le 1/{\kappa}$ where $\kappa$ is the absolute value of the largest eigenvalue of the second fundamental form over all points in $\Gamma$. Furthermore, if $\Gamma$ is a $C^2$ surface in $\mathbb{R}^3$, its reach is bounded below from 0, see \cite{Federer}. The reach $\tau_{\Gamma}$ 
can be estimated from the distance function $\phi_\Gamma$, in particular from information on the skeleton, also known as the medial axis, of $\Gamma$; see e.g. \cite{kimmel1995skeletonization}\cite{Aamari/19-EJS1551}.

We will work with the tubular neighborhood $T_{\epsilon} \subset \mathbb{R}^3$ of $\Gamma$:
\begin{equation}\label{def:T_epsilon}
T_{\epsilon} := \left\{ \mathbf{z} \in \mathbb{R}^3: \left\Vert \mathbf{z}- \mathbf{p} \right\Vert < \epsilon < \tau_\Gamma, \mathbf{p} \in \Gamma \right\}.
\end{equation}
See Figure~\ref{fig:neighborhood}, for a schematic depiction.
In $T_\epsilon$, the closest point mapping to $\Gamma$
\begin{equation}\label{eq:closestpoint}
    P_{\Gamma}(\mathbf{z}) := \text{argmin}_{\boldsymbol{x} \in \Gamma} ||\mathbf{z}-\mathbf{x}||,
\end{equation}
and the normal vector field
 $
 \mathbf{n}(\mathbf{z}) :=\nabla\phi_\Gamma(\mathbf{z})
 $
 are well-defined; furthermore,  
 $$\mathbf{n}(\mathbf{z})=\mathbf{n}(P_\Gamma\mathbf{z})~~\text{and}~~\phi_\Gamma(\mathbf{z})=\mathbf{n}(\mathbf{z}) \cdot (\mathbf{z} - P_\Gamma{\mathbf{z}}).$$

\begin{figure}[htp]
	\centering
	\includegraphics[width=0.5\textwidth]{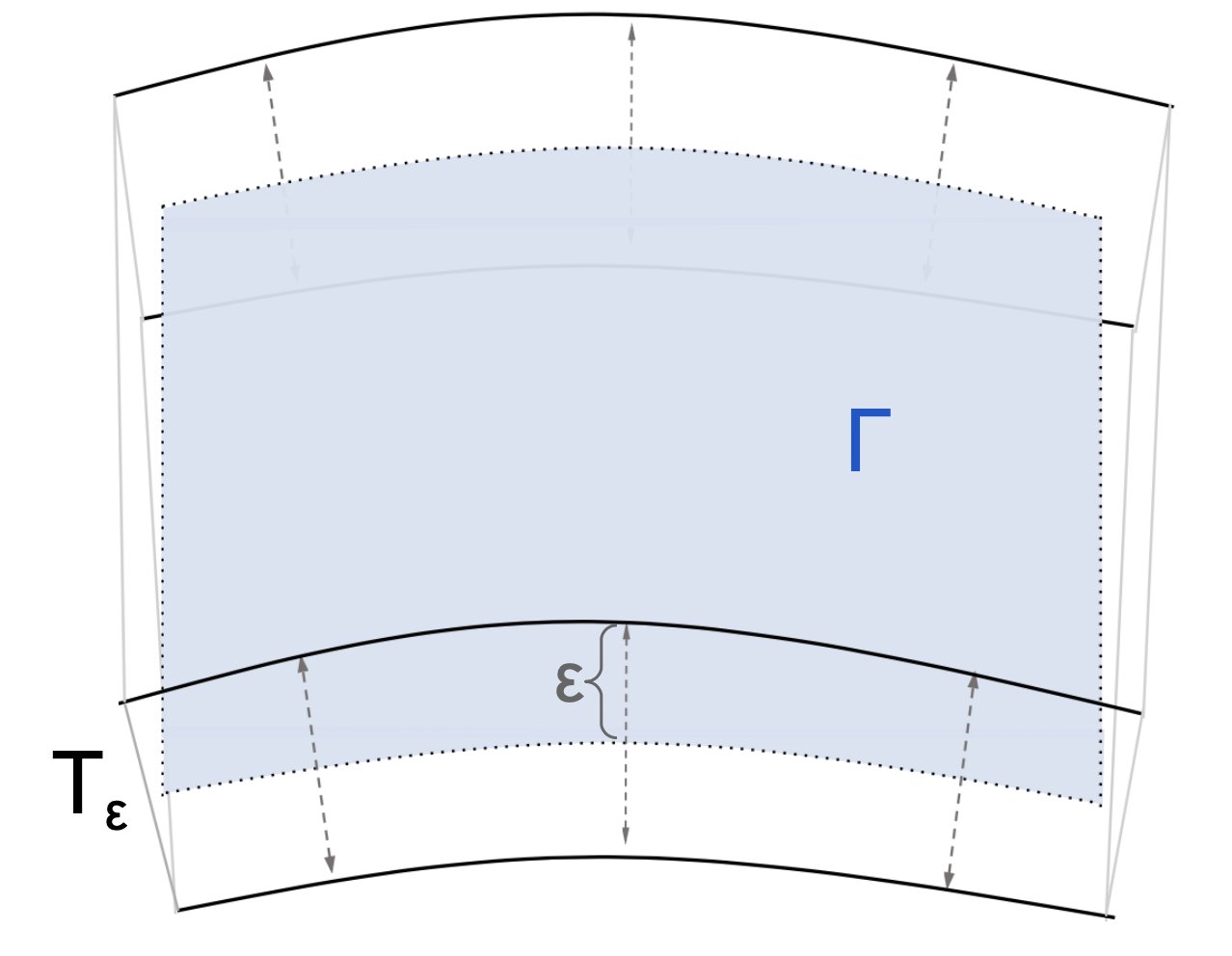}
	\caption{ New Optimal Transport problems  are defined in $T_\epsilon$ to have solutions to the Optimal Transport problems on $\Gamma$.
 }\label{fig:neighborhood}
\end{figure}
 
\subsection{Extension of Surface Densities}\label{sec:densities}

Let $\rho\in \Theta_{\Gamma}$, we can rewrite the integration of $f$ on $\Gamma$ to one on $\Gamma_\eta\subset T_\epsilon$ 
as follows:
\begin{equation}\label{eq:int_Gamma=int_Gamma_y}
    \int_{\Gamma}\rho(\mathbf{x})dS = \int_{\Gamma_{\eta}}\rho(P_{\Gamma}\mathbf{z}')J(\mathbf{z}')dS,
\end{equation}
where $J(\mathbf{z}')$
accounts for the change of variables in the integrations. In fact,
\begin{equation}\label{def:J}
  J(\mathbf{z}')=(1-\kappa_1 \phi_\Gamma)(1-\kappa_2 \phi_\Gamma)(\mathbf{z}')=\sigma_1(\mathbf{z}') \sigma_2(\mathbf{z}'),  
\end{equation}
where $\kappa_1,\kappa_2$ are the two  principal curvatures of $\Gamma_\eta$ at $\mathbf{z}^\prime$ whose signs are determined by the surface orientation,  and
$\sigma_1, \sigma_2$ are the two largest singular values of the Jacobian matrix of $ P_{\Gamma}$.
See e.g. \cite{kublik2016integration}.

Furthermore, by the coarea formula,
\begin{align*}
    1 = \int_{\Gamma}f(\mathbf{x})dS = \frac{1}{2\epsilon} \int_{-\epsilon}^\epsilon\int_{\Gamma_{y}}f(P_{\Gamma}\mathbf{z}')J(\mathbf{z}')dSd\eta
    &= \frac{1}{2\epsilon}\int_{T_\epsilon} f(P_{\Gamma}\mathbf{z}) J(\mathbf{z}) d\mathbf{z}.
\end{align*}

Therefore, our extended Optimal Transport problem is formulated with the class of density function defined below:
\begin{equation}\label{eq:densities}
    \Theta_{T_\epsilon}:= \left\{\frac{1}{2\epsilon}\rho(P_\Gamma\mathbf{z})J(\mathbf{z}): \rho\in\Theta_\Gamma \right\}.
\end{equation}

Therefore, any density in $\Theta_{T_\epsilon}$ is strictly positive and smooth (The Jacobian $J$ is smooth and strictly positive since we stay within the reach of $\Gamma$.)


\subsection{Extension of Cost Function to $T_\epsilon$}\label{sec:costs}

Let $c$ be the cost function defined on $\Gamma$. 
We define the extended cost function for any two points in $T_\epsilon$ by adding an additional cost to the difference in their distance to $\Gamma$: 

 \begin{equation}\label{extended-cost}
     c_{\sigma}(\mathbf{z}_1,\mathbf{z}_2) = c(P_\Gamma  \mathbf{z}_1, P_\Gamma  \mathbf{z}_2) + \frac{\sigma}{2} (\phi_\Gamma(\mathbf{z}_1)- \phi_\Gamma(\mathbf{z}_2))^2,~~~\text{for some}~\sigma>0.
 \end{equation}
We will see that the particular choice of $\sigma$ will not affect the analysis.

\subsection{Equivalence of the Two Optimal Transport Problems}\label{sec:maintheorem}

In this section, we establish that the Optimal Transport problem defined via our extensions of the density functions and the cost function in Sections~\ref{sec:densities} and~\ref{sec:costs} leads to an Optimal Transport mapping $\overline{\mathbf{m}}$ that moves mass only along each level set of the distance function to $\Gamma$. Moreover, $\overline{\mathbf{m}}$ can be used to find $\mathbf{m}$, the mapping from the Optimal Transport problem on $\Gamma$. 

\begin{theorem}\label{thm:maintheorem}
The solution, $\overline{\mathbf{m}}$, to the new Optimal Transport problem presented in Equation~\eqref{eq:OT2} defined with the densities in $\Theta_{T_\epsilon}$ and cost function in Equation~\eqref{extended-cost}, satisfies
\begin{equation}\label{eq:Pm_eps=m(P)}
    P_{\Gamma} \overline{\mathbf{m}}(\mathbf{z}) = \mathbf{m}(P_{\Gamma}\mathbf{z}),~~~\forall \mathbf{z}\in T_\epsilon,
\end{equation}
and
\begin{equation}\label{eq:phi_z=phim_z}
    \phi_\Gamma(\mathbf{z}) = \phi_\Gamma(\overline{\mathbf{m}}(\mathbf{z})),~~~\forall \mathbf{z}\in T_\epsilon.
\end{equation}
\end{theorem}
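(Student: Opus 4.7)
The plan is to sandwich the optimal value of \eqref{eq:OT2} between the optimal value $\mathcal{C}(\mathbf{m})$ of the original problem \eqref{eq:OT} and itself, using (i) a lower bound obtained by projecting any admissible competitor down to $\Gamma$ via $P_\Gamma$ and (ii) a matching upper bound provided by an explicit admissible extension of $\mathbf{m}$. Equality in this sandwich will then force the two structural identities \eqref{eq:Pm_eps=m(P)} and \eqref{eq:phi_z=phim_z}.

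For the lower bound, I first observe that $(P_\Gamma)_{\#}\overline{\mu} = \mu$ and $(P_\Gamma)_{\#}\overline{\nu} = \nu$, which follows from applying the coarea formula along the level sets of $\phi_\Gamma$ and then invoking the change-of-variables identity \eqref{eq:int_Gamma=int_Gamma_y} together with the normalization $1/(2\epsilon)$ baked into $\Theta_{T_\epsilon}$. Given any admissible $\xi$ for \eqref{eq:OT2}, the pushed-forward plan $\tilde{\pi} := (P_\Gamma \times P_\Gamma)_{\#} (\mathrm{Id},\xi)_{\#}\overline{\mu}$ on $\Gamma\times\Gamma$ has marginals $\mu$ and $\nu$, so
\[
\int_{T_\epsilon} c_\sigma(\mathbf{z},\xi(\mathbf{z}))\, d\overline{\mu}(\mathbf{z}) \;\ge\; \int_{\Gamma\times\Gamma} c\, d\tilde{\pi} \;\ge\; \mathcal{C}(\mathbf{m}),
\]
where the first inequality discards the nonnegative normal-penalty term in \eqref{extended-cost} and the second uses optimality of $\mathbf{m}$ for \eqref{eq:OT}.

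For the matching upper bound I would take the explicit competitor
\[
\overline{\mathbf{m}}_0(\mathbf{z}) := \mathbf{m}(P_\Gamma \mathbf{z}) + \phi_\Gamma(\mathbf{z})\,\mathbf{n}(\mathbf{m}(P_\Gamma \mathbf{z})),
\]
which lies in $T_\epsilon$ since $|\phi_\Gamma|<\epsilon<\tau_\Gamma$ and is smooth. By construction $P_\Gamma \overline{\mathbf{m}}_0 = \mathbf{m}\circ P_\Gamma$ and $\phi_\Gamma\circ \overline{\mathbf{m}}_0 = \phi_\Gamma$, so the normal penalty vanishes identically and the $c$-part of the cost collapses to $\mathcal{C}(\mathbf{m})$ after the same coarea computation. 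Admissibility $(\overline{\mathbf{m}}_0)_{\#}\overline{\mu}=\overline{\nu}$ follows by slicing: on each $\Gamma_y$, $\overline{\mathbf{m}}_0$ reduces under the diffeomorphism $\mathbf{x}\mapsto\mathbf{x}+y\mathbf{n}(\mathbf{x})$ to the surface map $\mathbf{m}$, and the uniform-in-$y$ structure of densities in $\Theta_{T_\epsilon}$ together with $\mathbf{m}_{\#}\mu=\nu$ gives the required identity on every level set.

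Applying the sandwich to the optimal $\overline{\mathbf{m}}$ of \eqref{eq:OT2} collapses both inequalities: the normal penalty must integrate to zero, yielding \eqref{eq:phi_z=phim_z} $\overline{\mu}$-a.e., and the associated projected plan attains $\mathcal{C}(\mathbf{m})$, so it is optimal for \eqref{eq:OT} and, by uniqueness of $\mathbf{m}$ in Theorem~\ref{thm:uniqueness}, must coincide with the graph measure $(\mathrm{Id},\mathbf{m})_{\#}\mu$, giving \eqref{eq:Pm_eps=m(P)} $\overline{\mu}$-a.e.; smoothness of the solutions under MTW upgrades both identities to pointwise statements on $T_\epsilon$. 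The main obstacle I foresee is this last uniqueness step: one must leverage the fact that under MTW conditions with densities in $\Theta_\Gamma$, the unique optimal Kantorovich plan on $\Gamma\times\Gamma$ is concentrated on the graph of $\mathbf{m}$ --- precisely the content backing Theorem~\ref{thm:uniqueness}.
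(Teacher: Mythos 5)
Your proposal is correct but proceeds by a genuinely different route from the paper. The paper's proof constructs the candidate pair $(\tilde v,\tilde{\mathbf m})$ explicitly, with $\tilde v = u\circ P_\Gamma$ and $\tilde{\mathbf m}$ the level-set-preserving extension of $\mathbf m$, checks mass preservation via the coarea formula, decomposes the gradient relation $\nabla\tilde v = -\nabla_{\mathbf z}c_\sigma(\mathbf z,\tilde{\mathbf m})$ into tangential and normal components, and then closes the argument by proving $c_\sigma$-convexity of $\tilde v$ (Lemma~\ref{lem:cconvex}) so that the uniqueness characterization of Theorem~\ref{thm:uniqueness} forces $\tilde{\mathbf m}=\overline{\mathbf m}$. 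You instead run a Kantorovich sandwich: push admissible plans on $T_\epsilon\times T_\epsilon$ forward by $P_\Gamma\times P_\Gamma$ to get couplings of $(\mu,\nu)$, drop the nonnegative normal penalty to obtain the lower bound $\mathcal C(\mathbf m)$, and match it with the explicit extension $\overline{\mathbf m}_0$; tightness then forces the penalty to vanish $\overline\mu$-a.e.\ (giving \eqref{eq:phi_z=phim_z}) and forces the projected plan to coincide with the graph of $\mathbf m$ by uniqueness on $\Gamma$ (giving \eqref{eq:Pm_eps=m(P)}). The ingredients you share with the paper are the coarea-based admissibility of the extended map and the appeal to uniqueness of the \emph{surface} problem; the ingredient you avoid is the direct verification of $c_\sigma$-convexity. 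Your route is arguably cleaner as a proof of the two identities and yields the Wasserstein-distance equality (the paper's Corollary) as a byproduct, whereas the paper's route has the advantage of also producing the potential function $\tilde v = u\circ P_\Gamma$ and its $c_\sigma$-convexity, which is exactly what the subsequent numerical discretization rests on. One small caveat: your final upgrade from ``$\overline\mu$-a.e.'' to ``pointwise'' invokes MTW smoothness for the extended cost $c_\sigma$, which neither you nor the paper verify; what you actually need, and can get more cheaply, is only continuity of $\overline{\mathbf m}$ together with the fact that $\overline\mu$ has a density bounded below, or else you can simply note that $\overline{\mathbf m}_0$ is itself an admissible minimizer and invoke uniqueness of $\overline{\mathbf m}$ (implicit in the theorem statement) to conclude $\overline{\mathbf m}=\overline{\mathbf m}_0$ exactly.
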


This theorem then implies that the Optimal Transport cost for the Optimal Transport problem on $\Gamma$ is equal to the Optimal Transport cost for the extended Optimal Transport problem on $T_{\epsilon}$. Thus, the Wasserstein distance, for example, on $\Gamma$ can be computed via the extended Optimal Transport problem on $T_{\epsilon}$.
\begin{corollary}
We have
\begin{equation}
\min_{\boldsymbol{\xi}_{\#}\overline{\mu} = \overline{\nu}} \int_{T_{\epsilon}} c_{\sigma}(\mathbf{z}, \boldsymbol{\xi}(\mathbf{z})) \overline{f}(\mathbf{z})d\mathbf{z} = \min_{\boldsymbol{\theta}_{\#}\mu = \nu} \int_{\Gamma}c(\mathbf{x}, \boldsymbol{\theta}(\mathbf{x})) f(\mathbf{x})dS(\mathbf{x}).
\end{equation}
\end{corollary}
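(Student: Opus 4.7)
The plan is to evaluate the extended cost functional at the optimizer $\overline{\mathbf{m}}$ guaranteed by Theorem~\ref{thm:maintheorem} and show it reduces exactly to the original cost functional evaluated at $\mathbf{m}$. The two conclusions of Theorem~\ref{thm:maintheorem} do almost all of the work: the distance-preserving identity $\phi_\Gamma(\mathbf{z})=\phi_\Gamma(\overline{\mathbf{m}}(\mathbf{z}))$ annihilates the penalty term in the extended cost~\eqref{extended-cost}, while the projection identity $P_\Gamma \overline{\mathbf{m}}(\mathbf{z})=\mathbf{m}(P_\Gamma \mathbf{z})$ rewrites the surface part of $c_\sigma$ in terms of $\mathbf{m}$ composed with $P_\Gamma$.

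First, I would substitute $\boldsymbol{\xi}=\overline{\mathbf{m}}$ into the left-hand side and use Theorem~\ref{thm:maintheorem} to obtain
\begin{equation*}
\int_{T_\epsilon} c_\sigma(\mathbf{z},\overline{\mathbf{m}}(\mathbf{z}))\,\overline{f}(\mathbf{z})\,d\mathbf{z}
= \int_{T_\epsilon} c(P_\Gamma \mathbf{z},\mathbf{m}(P_\Gamma \mathbf{z}))\,\overline{f}(\mathbf{z})\,d\mathbf{z},
\end{equation*}
since the penalty term $\tfrac{\sigma}{2}(\phi_\Gamma(\mathbf{z})-\phi_\Gamma(\overline{\mathbf{m}}(\mathbf{z})))^2$ vanishes pointwise.

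Next, I would unfold $\overline{f}(\mathbf{z})=\tfrac{1}{2\epsilon}f(P_\Gamma \mathbf{z})J(\mathbf{z})$ from~\eqref{eq:densities} and apply the coarea formula exactly as in Section~\ref{sec:densities}, slicing $T_\epsilon$ into the level sets $\Gamma_y$ for $y\in(-\epsilon,\epsilon)$. Since $P_\Gamma \mathbf{z}'$ is constant along the normal line through a surface point, the integrand on each slice depends only on $P_\Gamma \mathbf{z}'$. The identity~\eqref{eq:int_Gamma=int_Gamma_y} (with the integrand $\rho(\mathbf{x})$ replaced by the bounded measurable function $\mathbf{x}\mapsto c(\mathbf{x},\mathbf{m}(\mathbf{x}))f(\mathbf{x})$, which is the only ingredient of that change-of-variables identity) then gives
\begin{equation*}
\int_{\Gamma_y} c(P_\Gamma \mathbf{z}',\mathbf{m}(P_\Gamma \mathbf{z}'))\,f(P_\Gamma \mathbf{z}')\,J(\mathbf{z}')\,dS
= \int_{\Gamma} c(\mathbf{x},\mathbf{m}(\mathbf{x}))\,f(\mathbf{x})\,dS(\mathbf{x}),
\end{equation*}
which is independent of $y$. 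Integrating over $y\in(-\epsilon,\epsilon)$ and dividing by $2\epsilon$ cancels the prefactor and produces the right-hand side of the corollary.

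The only potential subtlety — and what I would identify as the main obstacle — is the justification that both sides really attain their minima at $\overline{\mathbf{m}}$ and $\mathbf{m}$ respectively, and that no other competitor on $T_\epsilon$ can do better than $\overline{\mathbf{m}}$. Uniqueness on the surface side is covered by Theorem~\ref{thm:uniqueness} under the standing MTW assumption, and uniqueness of $\overline{\mathbf{m}}$ on $T_\epsilon$ is precisely what Theorem~\ref{thm:maintheorem} delivers. Hence the chain of equalities above relates the two \emph{minima}, not merely two particular values, and the corollary follows.
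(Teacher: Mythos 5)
Your proposal is correct and follows essentially the same route as the paper's own proof: evaluate the extended functional at the optimizer $\overline{\mathbf{m}}$, use the two conclusions of Theorem~\ref{thm:maintheorem} to kill the penalty term and to rewrite the surface cost as $c(P_\Gamma \mathbf{z}, \mathbf{m}(P_\Gamma \mathbf{z}))$, then apply the coarea formula and the change-of-variables identity to collapse the tubular integral onto $\Gamma$ with the $1/(2\epsilon)$ prefactor cancelling. Your closing remark on why the values compared are genuinely minima is a reasonable clarification, but it is the same argument; no new ideas are introduced or required.
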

\begin{proof}

\begin{align*}
\min_{\boldsymbol{\xi}_{\#}\overline{\mu} = \overline{\nu}} \int_{T_{\epsilon}} c_{\sigma}(\mathbf{z}, \boldsymbol{\xi}(\mathbf{z})) \overline{f}(\mathbf{z})d\mathbf{z} &= \int_{T_{\epsilon}} c_{\sigma}(\mathbf{z}, \overline{\mathbf{m}}(\mathbf{z})) \overline{f}(\mathbf{z})d\mathbf{z} \\
&=\int_{T_{\epsilon}} c(P_{\Gamma}\mathbf{z},P_{\Gamma}\overline{\mathbf{m}}(\mathbf{z}))\overline{f}(\mathbf{z})d\mathbf{z} \\
&=\frac{1}{2 \epsilon} \int_{-\epsilon}^{\epsilon} \int_{\Gamma_\eta} c(P_{\Gamma}\mathbf{z}, P_{\Gamma}\overline{\mathbf{m}}(\mathbf{z})) f(\mathbf{x})J(\mathbf{z})dS(\mathbf{z})d\eta \\
&=\frac{1}{2 \epsilon}\int_{-\epsilon}^{\epsilon} \int_{\Gamma}c(\mathbf{x}, \mathbf{m}(\mathbf{x})) f(\mathbf{x})dS(\mathbf{x})d\eta \\
&= \int_{\Gamma}c(\mathbf{x}, \mathbf{m}(\mathbf{x})) f(\mathbf{x})dS(\mathbf{x}) \\
&= \min_{\boldsymbol{\theta}_{\#}\mu = \nu} \int_{\Gamma}c(\mathbf{x}, \boldsymbol{\theta}(\mathbf{x})) f(\mathbf{x})dS(\mathbf{x}).
\end{align*}

\end{proof}

\subsubsection*{Mass preservation}

Since $\epsilon<\tau_\Gamma$ and $\Gamma$ is compact and closed, the projection $P_\Gamma$ is bijective between $\Gamma$ and $\Gamma_\eta$ for any $\eta\in(-\epsilon,\epsilon).$
We define the inverse map 
\begin{equation}
    P_{\Gamma,\Gamma_\eta}^{-1}(\mathbf{z}):=\mathbf{z}+\eta\mathbf{n}(\mathbf{z}),~~~\text{for}~\mathbf{z}\in\Gamma,
\end{equation}
where $\mathbf{n}(\mathbf{z})=\nabla\phi_\Gamma(\mathbf{z})$ is the outward normal vector of $\Gamma$ at $\mathbf{z}$. 
This vector remains the same for all $\mathbf{z}'\in T_\epsilon$ satisfying $P_\Gamma\mathbf{z}'=\mathbf{z}.$ Thus we see that $P_\Gamma\circ P^{-1}_{\Gamma,\Gamma_\eta}(\mathbf{z})=\mathbf{z}$ and $P^{-1}_{\Gamma,\Gamma_\eta}\circ P_\Gamma(\mathbf{z}')=\mathbf{z}'$ for $\mathbf{z}'\in\Gamma_\eta.$
Therefore, we can associate any mapping $\boldsymbol{\xi}:\Gamma\mapsto\Gamma$ with $\boldsymbol{\xi}\circ P_\Gamma:\Gamma_\eta\mapsto\Gamma_\eta$ and vice versa.
So we define
\begin{equation}\label{eq:extension_by_restriction}
    \boldsymbol{\xi}_\epsilon(\mathbf{z}) := P_{\Gamma,\Gamma_\eta}^{-1}\circ\boldsymbol{\xi}\circ P_\Gamma(\mathbf{z}),~~~\phi_\Gamma(\mathbf{z})=\eta,~~\text{and}~\eta\in(-\epsilon,\epsilon).
\end{equation}
Equivalently, 
\begin{equation}\label{eq:extension_by_restriction2} P_{\Gamma}\boldsymbol{\xi}_\epsilon(\mathbf{z}')=\boldsymbol{\xi}(P_\Gamma \mathbf{z}'),~~~\mathbf{z}\in T_\epsilon,
\end{equation}
and
\begin{equation}\label{eq:extension_by_restriction3} \phi_{\Gamma}(\boldsymbol{\xi}_\epsilon(\mathbf{z}'))=\phi_\Gamma(\boldsymbol{\xi}_\epsilon(\mathbf{z}')),~~~\mathbf{z}\in T_\epsilon.
\end{equation}
In particular, suppose $\mathbf{m}$ is the solution 
to the Optimal transport problem in Equation~\eqref{eq:OT} on $\Gamma$. 
Then, we will show that
\begin{equation}
 \tilde{\mathbf{m}}(\mathbf{z}) := (P_\Gamma\mathbf{z}+\phi_\Gamma(\mathbf{z})\mathbf{n}(\mathbf{z}))~\mathbf{m}(P_\Gamma\mathbf{z})  
\end{equation}
is a solution to the extended Optimal Transport problem in Equation~\eqref{eq:OT2}.

The first step is to show that if $\boldsymbol{\xi}\#\mu = \nu$ with $\mu=fdS$ and $\nu=gdS$, then
 $\boldsymbol{\xi}_\epsilon \mu =\nu$, with $\mu = f_\epsilon d\mathbf{x}$, and $\nu = g_\epsilon d\mathbf{x}$. 
 But this is another exercise on the coarea formula. Let $E\subset T_\epsilon$, we have

\begin{align*}
    \int_E \overline{g}(\mathbf{x}) d\mathbf{x} 
    &=\int_{-\epsilon}^\epsilon \int_{E\cap \Gamma_\eta } g(P_\Gamma\mathbf{z}') J(\mathbf{z}')~dSd\eta \\
    &= \int_{-\epsilon}^\epsilon \int_{P_\Gamma(E\cap\Gamma_\eta)} g(\mathbf{z})~dS d\eta\\
    &= \int_{-\epsilon}^\epsilon \int_{\boldsymbol{\xi}^{-1}(P_\Gamma(E\cap\Gamma_\eta))} f(\mathbf{z})~dS d\eta\\
    &= \int_{-\epsilon}^{\epsilon} \int_{P^{-1}_{\Gamma, \Gamma_\eta} (\boldsymbol{\xi}^{-1}(P_{\Gamma}(E \cap \Gamma_\eta)))} f(P_{\Gamma} \mathbf{z})J(\mathbf{z})~dS d\eta \\
    &=  \int_{-\epsilon}^\epsilon \int_{\boldsymbol{\xi}_\epsilon^{-1}(E\cap \Gamma_\eta) } f(P_\Gamma \mathbf{z}')J(\mathbf{z}')~dSd\eta
    = \int_{\boldsymbol{\xi}_\epsilon^{-1}(E)} \overline{f}(\mathbf{x}) d\mathbf{x},
\end{align*}
where the penultimate equality follows since $P_{\Gamma, \Gamma_\eta}^{-1} \circ \boldsymbol{\xi}^{-1} \circ P_{\Gamma} \circ P^{-1}_{\Gamma, \Gamma_\eta} \circ \boldsymbol{\xi} \circ P_{\Gamma} = P_{\Gamma, \Gamma_\eta}^{-1} \circ \boldsymbol{\xi}^{-1} \circ \boldsymbol{\xi} \circ P_{\Gamma} = P_{\Gamma, \Gamma_\eta}^{-1} \circ P_{\Gamma} = \text{Id}$, we then know that $P^{-1}_{\Gamma, \Gamma_\eta} (\boldsymbol{\xi}^{-1}(P_{\Gamma}(E \cap \Gamma_\eta))) = \boldsymbol{\xi}_{\epsilon}^{-1}(E \cap \Gamma_\eta)$.


Comparing Equation~\eqref{eq:OT2} with Equation~\eqref{extended-cost}, Equation~\eqref{eq:int_Gamma=int_Gamma_y}, and Equation~\eqref{eq:densities}, we see that the transport cost over all $\boldsymbol{\xi}_\epsilon$ is minimized for $\boldsymbol{\xi}_\epsilon = \tilde{\mathbf{m}}$:
\begin{equation}\label{eq:equivalent_integrals}
\begin{aligned}
        &\int_{T_{\epsilon}} c_{\sigma}\left(\mathbf{z}',\boldsymbol{\xi}_\epsilon(\mathbf{z}') \right) \overline{f}(\mathbf{z}') d\mathbf{z}' \\
    &=\int_{-\epsilon}^\epsilon \int_{\Gamma_\eta} c(P_\Gamma  \mathbf{z}', P_\Gamma \boldsymbol{\xi}_\epsilon(\mathbf{z}') ) f(P_{\Gamma}\mathbf{z}')J(\mathbf{z}')dSd\eta \\
    &= 2\epsilon \int_\Gamma c(\mathbf{z}, \boldsymbol{\xi}(\mathbf{z})) f(\mathbf{z}) dS\\
    &\ge 2\epsilon \int_\Gamma c(\mathbf{z}, \boldsymbol{m}(\mathbf{z})) f(\mathbf{z}) dS = \int_{T_{\epsilon}} c_{\sigma}\left(\mathbf{z}',\tilde{\boldsymbol{m}}_\epsilon(\mathbf{z}') \right) \overline{f}(\mathbf{z}') d\mathbf{z}',
\end{aligned}
\end{equation}


Following the above construction, we see that Equation~\eqref{eq:Pm_eps=m(P)} holds for $\tilde{\mathbf{m}}$.
Now consider $\mathbf{z}_y\in \Gamma_{y}$.  
By construction, $\tilde{\mathbf{m}}(\mathbf{z}_y)\in\Gamma_\eta,$ implying Equation~\eqref{eq:phi_z=phim_z}.  
$\tilde{\mathbf{m}}(\mathbf{z})$  does not move mass in the normal direction of $\Gamma$. 

For our extended problem, since the source and target densities are supported on $\mathbb{R}^3$, the Optimal Mapping $\overline{\mathbf{m}}$ and potential function $v$ satisfy:
\begin{equation}\label{eq:mappingEuclidean}
\nabla v(\mathbf{z}) = -\nabla_{\mathbf{z}} c_{\sigma}(\mathbf{z},\boldsymbol{\xi}), 
\end{equation}
for $\boldsymbol{\xi}=\overline{\mathbf{m}}(\mathbf{z})$, where we emphasize that $\nabla$ is the Euclidean gradient and $v$ is $c_{\sigma}$-convex. 
The relation defined in Equation~\eqref{eq:mappingEuclidean} applied to $\tilde{\mathbf{m}}$ and separated into one defined on surfaces that are equidistant to $\Gamma$ becomes
\begin{equation}\label{eq:pde-on-Gamma_eta}
    (I-\mathbf{n}\otimes\mathbf{n})\nabla \tilde{v} (\mathbf{z}) = -(I-\mathbf{n}\otimes\mathbf{n}) \nabla_{\mathbf{z}} c \left(P_\Gamma \mathbf{z}, \mathbf{m} \right),
\end{equation}
and one in the normal direction of the surface,
\begin{equation}\label{eq:normal-derivative-u_epsilon}
    \nabla \tilde{v} (\mathbf{z}) \cdot \mathbf{n} = \sigma \left(\phi_\Gamma(\tilde{\mathbf{m}}(\mathbf{z})) - \phi_\Gamma(\mathbf{z}) \right). 
\end{equation}
Here $\mathbf{n}$ is the normal of $\Gamma$ at $P_\Gamma\mathbf{z}$.
Notice that for $\mathbf{z}\in\Gamma$, Equation~\eqref{eq:pde-on-Gamma_eta} is Equation~\eqref{eq:relation}. This means that the restriction of $v$ on $\Gamma$ solves Equation~\eqref{eq:relation}.

Using
Equation~\eqref{eq:phi_z=phim_z} in Equation~\eqref{eq:normal-derivative-u_epsilon}, we obtain
\begin{equation}\label{eq:indepsystem2}
    \frac{\partial \tilde{v}}{\partial \mathbf{n}}(\mathbf{z}) = 0,~~~\mathbf{z}\in T_\epsilon.
\end{equation}
Then, by Equation~\eqref{eq:pde-on-Gamma_eta} 
$$\tilde{v}(\mathbf{z}) = u(P_{\Gamma}\mathbf{z})+C,$$ for some constant $C$ and $P_{\Gamma} \overline{\mathbf{m}}(\mathbf{z}) = \mathbf{m}(P_{\Gamma}\mathbf{z})$, which is the solution pair from the Optimal Transport problem on $\Gamma$.

Now, in order to finish the proof of Theorem~\ref{thm:maintheorem}, it remains to show that $\tilde{v}$ is $\bar{c}$-convex. Once this is shown, then by Theorem~\ref{thm:uniqueness} there is a unique solution of the Optimal Transport problem in Equation~\eqref{eq:OT2}, then we have shown, in fact, $\tilde{\mathbf{m}}=\overline{\mathbf{m}}$ satisfying $\phi_{\Gamma}(\overline{\mathbf{m}}(\mathbf{z})) = \phi_{\Gamma}(\mathbf{z})$ and $P_{\Gamma} \overline{\mathbf{m}}(\mathbf{z}) = \mathbf{m}(P_{\Gamma}\mathbf{z})$.
 
\subsubsection*{$c_{\sigma}$-Convexity of the Potential Function}

\begin{lemma}\label{lem:cconvex}
$\tilde{v}(\mathbf{z}) = u(P_{\Gamma}\mathbf{z})$ is $\bar{c}$-convex.
\begin{proof}
Recall the definition of $c$-convexity from Definition~\ref{def:cconvex}: $u$ is $c$-convex since it is the potential function for the Optimal Transport problem on $\Gamma$. This means that for all $\mathbf{x} \in \Gamma$, there exists a point $\mathbf{\tilde{x}} \in \Gamma$ and the $c$-transform of $u$ satisfies $u^{c}(\mathbf{\tilde{x}})$ such that:
\begin{align}\label{eq:xdir}
-u^{c}(\mathbf{\tilde{x}}) - c(\mathbf{x},\mathbf{\tilde{x}}) &= u(\mathbf{x}), \\
-u^{c}(\mathbf{\tilde{x}}) - c(\mathbf{x}', \mathbf{\tilde{x}}) &\leq u(\mathbf{x}'), \ \forall \mathbf{x}' \in \Gamma.\label{eq:xdir2}
\end{align}

Now, fix $\mathbf{x} \in \Gamma$ and fix a $\mathbf{z} \in T_{\epsilon}$ such that $P_{\Gamma}\mathbf{z} = \mathbf{x}$. From Equation~\eqref{eq:xdir}, given $\mathbf{x} \in \Gamma$, we have an $\tilde{\mathbf{x}} \in \Gamma$ such that Equation~\eqref{eq:xdir} holds. Choose $\tilde{\mathbf{z}}$ such that $P_{\Gamma}\tilde{\mathbf{x}} = \tilde{\mathbf{x}}$ and $ \phi_{\Gamma}\tilde{\mathbf{z}} = \phi_{\Gamma} \mathbf{z} $. By the definition of the $c_{\sigma}$-transform in $T_{\epsilon}$:
\begin{equation}
\tilde{v}^{c_{\sigma}}(\mathbf{\tilde{z}}) = \sup_{\boldsymbol{\xi}} \left(-c_{\sigma}(\boldsymbol{\xi}, \mathbf{\tilde{z}} ) - \tilde{v}(\boldsymbol{\xi}) - C \right),
\end{equation}

\begin{equation}
= \sup_{P_{\Gamma}\boldsymbol{
\xi, \phi_{\Gamma}\boldsymbol{\xi}
}} \left( -c(P_{\Gamma} \boldsymbol{\xi}, \mathbf{\tilde{x}}) - \frac{\sigma}{2} (\phi_{\Gamma} \boldsymbol{\xi}-\phi_{\Gamma} \tilde{\mathbf{z}})^2 - u(P_{\Gamma} \boldsymbol{\xi}) - C \right),
\end{equation}
we can immediately take the supremum over $\phi_{\Gamma} \boldsymbol{\xi}$ by choosing $\phi_{\Gamma} \boldsymbol{\xi} = \phi_{\Gamma} \tilde{\mathbf{z}}$, since the other terms do not depend on $\phi_{\Gamma} \boldsymbol{\xi}$. Thus,
\begin{equation}
\tilde{v}^{c_{\sigma}}(\mathbf{\tilde{z}}) = \sup_{P_{\Gamma}\boldsymbol{\xi}} \left(-c(P_{\Gamma}\boldsymbol{\xi}, \mathbf{\tilde{x}} ) - u(P_{\Gamma}\boldsymbol{\xi})-C \right) = u^{c}(\mathbf{\tilde{x}}) - C.
\end{equation}

Thus, we get:
\begin{equation}
-\tilde{v}^{c_{\sigma}}(\mathbf{\tilde{z}}) - c_{\sigma}(\mathbf{z},\mathbf{\tilde{z}}) = -u^{c}(\mathbf{\tilde{x}}) +C- c(P_{\Gamma} \mathbf{z}, \mathbf{\tilde{x}}) = u(P_{\Gamma}\mathbf{z})+C = \tilde{v}(\mathbf{z}),
\end{equation}
by Equation~\eqref{eq:xdir}. Now, for any $\mathbf{z}' \in T_{\epsilon}$. Then,
\begin{multline}
-\tilde{v}^{c_{\sigma}}(\mathbf{\tilde{z}}) - c_{\sigma}(\mathbf{z}', \mathbf{\tilde{z}}) = -u^{c}(\mathbf{\tilde{x}})+C -\frac{\sigma}{2}(\phi_{\Gamma} \mathbf{z}'- \phi_{\Gamma} \tilde{\mathbf{z}})^2- c(P_{\Gamma}\mathbf{z}', P_{\Gamma} \tilde{\mathbf{z}}) \leq \\
-u^{c}(\mathbf{\tilde{x}})+C - c(P_{\Gamma} \mathbf{z}', P_{\Gamma} \tilde{\mathbf{z}}) \leq u(P_{\Gamma} \mathbf{z}')+C = \tilde{v}(\mathbf{z}'), \ \forall \mathbf{z}',
\end{multline}
by Equation~\eqref{eq:xdir2}. Thus, $\tilde{v}$ is $c_{\sigma}$-convex.

\end{proof}
\end{lemma}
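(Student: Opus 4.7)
The plan is to transfer the $c$-convexity of $u$ on $\Gamma$ to $c_\sigma$-convexity of $\tilde{v}$ on $T_\epsilon$ by exploiting two structural features of the extended setup: $\tilde{v}(\mathbf{z}) = u(P_\Gamma \mathbf{z})$ is invariant along normal fibers, and the additive penalty $\frac{\sigma}{2}(\phi_\Gamma \mathbf{z}_1 - \phi_\Gamma \mathbf{z}_2)^2$ in $c_\sigma$ is nonnegative and vanishes exactly when the two arguments share the same $\phi_\Gamma$ level.

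Given $\mathbf{z} \in T_\epsilon$, I would first apply the $c$-convexity of $u$ at $\mathbf{x} := P_\Gamma \mathbf{z}$ to obtain a pairing point $\tilde{\mathbf{x}} \in \Gamma$ satisfying Definition~\ref{def:cconvex}, and then define the candidate $c_\sigma$-pair $\tilde{\mathbf{z}} \in T_\epsilon$ as the unique point with $P_\Gamma \tilde{\mathbf{z}} = \tilde{\mathbf{x}}$ and $\phi_\Gamma \tilde{\mathbf{z}} = \phi_\Gamma \mathbf{z}$. This lift is well-defined because $P_\Gamma$ is a bijection between $\Gamma$ and each level set $\Gamma_\eta$ inside the reach. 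Next I would compute $\tilde{v}^{c_\sigma}(\tilde{\mathbf{z}})$ directly: splitting the supremum variable $\boldsymbol{\xi}$ into its $P_\Gamma$ and $\phi_\Gamma$ components, the terms $u(P_\Gamma \boldsymbol{\xi})$ and $c(P_\Gamma \boldsymbol{\xi}, \tilde{\mathbf{x}})$ are independent of $\phi_\Gamma \boldsymbol{\xi}$, so the inner optimization in the normal coordinate is trivially solved by $\phi_\Gamma \boldsymbol{\xi} = \phi_\Gamma \tilde{\mathbf{z}}$, zeroing the penalty. The residual outer supremum over $P_\Gamma \boldsymbol{\xi} \in \Gamma$ is precisely $u^c(\tilde{\mathbf{x}})$, so $\tilde{v}^{c_\sigma}(\tilde{\mathbf{z}}) = u^c(\tilde{\mathbf{x}})$ modulo an additive constant inherited from the definition of $\tilde{v}$.

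With this identification, both $c_\sigma$-convexity conditions fall out. The equality $-\tilde{v}^{c_\sigma}(\tilde{\mathbf{z}}) - c_\sigma(\mathbf{z},\tilde{\mathbf{z}}) = \tilde{v}(\mathbf{z})$ inherits from the equality for $u$ at $\mathbf{x}$ because the penalty $\frac{\sigma}{2}(\phi_\Gamma \mathbf{z} - \phi_\Gamma \tilde{\mathbf{z}})^2$ vanishes by our choice of $\tilde{\mathbf{z}}$, and the inequality at arbitrary $\mathbf{z}' \in T_\epsilon$ reduces to the inequality for $u$ at $P_\Gamma \mathbf{z}'$ after discarding the nonnegative penalty $\frac{\sigma}{2}(\phi_\Gamma \mathbf{z}' - \phi_\Gamma \tilde{\mathbf{z}})^2$. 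The only delicate point, which is more a load-bearing observation than an obstacle, is the separable computation of the $c_\sigma$-transform in the second step: it relies on both the product structure of $c_\sigma$ from Section~\ref{sec:costs} and the normal-fiber invariance of $\tilde{v}$, and if either were perturbed by a cross term between $P_\Gamma \boldsymbol{\xi}$ and $\phi_\Gamma \boldsymbol{\xi}$ the reduction to $u^c$ would fail.
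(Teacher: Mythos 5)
Your proposal is correct and follows essentially the same route as the paper's proof: lift the $c$-convexity pairing point $\tilde{\mathbf{x}}$ to the same $\phi_\Gamma$-level as $\mathbf{z}$, observe that the $c_\sigma$-transform of $\tilde{v}$ separates so that the normal-coordinate supremum zeroes the penalty and the tangential supremum reduces to $u^c(\tilde{\mathbf{x}})$, then obtain the equality from the vanishing penalty and the inequality by discarding the nonnegative penalty. No substantive differences from the paper's argument.
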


\subsection{Example: Optimal Transport on the Unit Sphere}\label{sec:sphere}

In our numerical implementation in Section~\ref{sec:examples}, we have found it fruitful to utilize explicit expressions for the Optimal Transport mapping in terms of the Euclidean gradient of the potential function. We have also found it useful on the sphere to simply use explicit expressions for the mixed Hessian terms in the PDE~\eqref{eq:OTExtension}. Expressions for the mixed Hessian are much more difficult to derive on other surfaces, so in those cases the simple discretization presented in Section~\ref{sec:examples} is used instead. In practice, it is perfectly reasonable to eschew these analytical derivations and simply discretize the PDE system~\eqref{eq:OTExtension} directly.

Let $\Gamma$ be the unit sphere in $\mathbb{R}^3$. 
Although we will be using the usual spherical polar coordinates, $\mathbf{z}= (r \cos \phi \sin \theta, r \sin \phi \sin \theta, r \cos \theta),$ for $0\le\phi<2\pi$, $0\le\theta\le \pi$, on the sphere we will not need to derive many quantities that depend on the coordinates $\theta$ and $\phi$. One minor exception is
the surface area element $dS=r\sin(\theta)d\phi d\theta$ which will be useful for integrating the source and target densities $f$ and $g$ when they are explicitly known. Some of the important quantities that do not depend on $\theta$ and $\phi$ are:
$$
\phi_\Gamma( \mathbf{z})= r-1,~P_\Gamma \mathbf{z} = \frac{ \mathbf{z}}{|| \mathbf{z}||},~J( \mathbf{z})= 1-\frac{\phi_\Gamma( \mathbf{z})}{1+ \phi_\Gamma(\mathbf{z})}=\frac{1}{r^2}.
$$
Thus densities in $\Theta_{T_\epsilon}$ take the form
$$\overline{\rho}(r, \phi,\theta)= \frac{\rho(\phi,\theta)}{2\epsilon r^2}.$$

\subsubsection*{Case 1: $c(\mathbf{x},\mathbf{y}) = \frac{1}{2}d_{\mathbb{S}^2}(\mathbf{x},\mathbf{y})$ is the the squared geodesic distance between $\mathbf{x}$ and $\mathbf{y}$ on the unit sphere.} 
The extended cost function on the unit sphere has the explicit form $c(\mathbf{x}, \mathbf{y}) = \frac{1}{2}\arccos(\mathbf{x} \cdot \mathbf{y})^2$, and consequently we have:
\begin{equation}
    c_{\sigma}(\mathbf{z}, \boldsymbol{\xi}) = \frac{\sigma}{2}\left(||\mathbf{z}||-||\boldsymbol{\xi}||\right)^2 + \frac{1}{2}\arccos \left( \frac{\mathbf{z}}{||\mathbf{z}||} \cdot \frac{\boldsymbol{\xi}}{||\boldsymbol{\xi}||} \right)^2,
\end{equation}

Once we have chosen a cost function, we can compute the Optimal Transport mapping. The Optimal Transport mapping $\overline{\mathbf{m}}(\mathbf{z})$ can be found by solving Equations~\eqref{eq:pde-on-Gamma_eta}and~\eqref{eq:normal-derivative-u_epsilon}. Thus, we find:
\begin{equation}\label{eq:msq1}
    \phi_{\Gamma}(\overline{\mathbf{m}}(\mathbf{z})) = \phi_{\Gamma}(\mathbf{z}) + \frac{1}{\sigma}\nabla v(\mathbf{z}) \cdot \mathbf{n},
\end{equation}
and
\begin{equation}\label{eq:msq2}
    P_{\Gamma}(\overline{\mathbf{m}}(\mathbf{z})) = \text{exp}_{P_{\Gamma}(\mathbf{z})} \left( (1+\phi_{\Gamma}(\mathbf{z})) \left( I - \mathbf{n} \otimes \mathbf{n} \right) \nabla v(\mathbf{z}) \right),
\end{equation}
which states that the projection of $\nabla v$ onto the tangent plane is multiplied by the factor $1+\phi_{\Gamma}(\mathbf{z})$ and then transported via the exponential map on $\mathbb{S}^2$ starting at the point $P_{\Gamma}(\mathbf{z})$ in the direction of the projection of $\nabla v$ onto the tangent plane at $P_{\Gamma}(\mathbf{z})$. This leads to the expression:
\begin{equation}\label{eq:msq3}
    \overline{\mathbf{m}}(\mathbf{z}) = P_{\Gamma}(\overline{\mathbf{m}}(\mathbf{z})) + \phi_{\Gamma} \left(\overline{\mathbf{m}}(\mathbf{z}) \right)\mathbf{n}.
\end{equation}


The mixed Hessian term in $T_{\epsilon}$ can be derived by using the mixed Hessian term for the squared geodesic cost on the unit sphere~\cite{HT_OTonSphere2},~\cite{HT_OTonSphere3}:
\begin{equation}\label{eq:mHsqgd}
    \left\vert \det D^2_{\mathbf{x}\mathbf{y}, \mathbb{S}^2} \left( \frac{1}{2}d_{\mathbb{S}^2}(\mathbf{x}, \mathbf{y})^2 \right) \right\vert = \frac{\left\Vert \nabla_{\mathbb{S}^2} u(\mathbf{x}) \right\Vert}{\sin \left(\left\Vert \nabla_{\mathbb{S}^2} u(\mathbf{x}) \right\Vert \right)}.
\end{equation}
Denoting $\nabla_{\Gamma_{\eta}} v(\mathbf{z}) = (I-\mathbf{n} \otimes \mathbf{n})\nabla v(\mathbf{z})$, the corresponding mixed Hessian for the extended problem is:
\begin{equation}\label{eq:mHfullsqgd}
    \left\vert \det D^2_{\mathbf{z} \boldsymbol{\xi}} c_{\sigma}(\mathbf{z}, \boldsymbol{\xi}) \right\vert = \sigma \frac{\left\Vert (1+\phi_{\Gamma}(\mathbf{z})) \nabla_{\Gamma_{\eta}} v(\mathbf{z}) \right\Vert}{(1+\phi_{\Gamma}(\mathbf{z}))^2 \sin \left(\left\Vert (1+\phi_{\Gamma}(\mathbf{z})) \nabla_{\Gamma_{\eta}} v(\mathbf{z}) \right\Vert \right)},
\end{equation}
for $\boldsymbol{\xi} = \overline{\mathbf{m}}(\mathbf{z})=\mathbf{m}(P_\Gamma\mathbf{z})$.

\subsubsection*{Case 2:  $c(\mathbf{x},\mathbf{y}) = -\log(1 - \mathbf{x} \cdot \mathbf{y})$, the logarithmic cost function appearing in the reflector antenna problem.} 
We have
\begin{equation}
    c_{\sigma}(\mathbf{z}, \boldsymbol{\xi}) = \frac{\sigma}{2}\left(||\mathbf{z}||-||\boldsymbol{\xi}||\right)^2 - \log \left( 1 - \frac{\mathbf{z}}{||\mathbf{z}||} \cdot \frac{\boldsymbol{\xi}}{||\boldsymbol{\xi}||} \right)^2.
\end{equation}

The Optimal Transport mapping $\overline{\mathbf{m}}(\mathbf{z})$ can be found by solving Equations~\eqref{eq:pde-on-Gamma_eta}and~\eqref{eq:normal-derivative-u_epsilon}. As in the squared geodesic cost case, we find:
\begin{equation}\label{eq:lc1}
    \phi_{\Gamma}(\overline{\mathbf{m}}(\mathbf{z})) = \phi_{\Gamma}(\mathbf{z}) + \frac{1}{\sigma}\nabla v(\mathbf{z}) \cdot \mathbf{n},
\end{equation}
and the optimal transport mapping satisfies ~\cite{HT_OTonSphere3}:
\begin{multline}\label{eq:lc2}
    P_{\Gamma}\left(\overline{\mathbf{m}}(\mathbf{z})\right) = P_{\Gamma}\mathbf{z} \frac{\left\Vert (1+\phi_{\Gamma}(\mathbf{z})) \nabla_{\Gamma_{\eta}} v(\mathbf{z}) \right\Vert^2 - 1}{\left\Vert (1+\phi_{\Gamma}(\mathbf{z})) \nabla_{\Gamma_{\eta}} v(\mathbf{z}) \right\Vert^2 + 1} \\ - (1+\phi_{\Gamma}(\mathbf{z})) \nabla_{\Gamma_{\eta}} v(\mathbf{z}) \frac{2}{\left\Vert (1+\phi_{\Gamma}(\mathbf{z})) \nabla_{\Gamma_{\eta}} v(\mathbf{z}) \right\Vert^2 + 1}.
\end{multline}

As with the squared geodesic cost function, this leads to the expression:
\begin{equation}\label{eq:lc3}
    \overline{\mathbf{m}}(\mathbf{z}) = P_{\Gamma}(\overline{\mathbf{m}}(\mathbf{z})) + \phi_{\Gamma} \left(\overline{\mathbf{m}}(\mathbf{z}) \right)\mathbf{n}.
\end{equation}

In this case, the mixed Hessian term for the logarithmic cost on the sphere is:
\begin{equation}\label{eq:mHlogcost}
    \left\vert \det D^2_{\mathbf{x}\mathbf{y}, \mathbb{S}^2} \left( -\log(1 - \mathbf{x} \cdot \mathbf{y}) \right) \right\vert = \frac{\left(\left\Vert \nabla_{\mathbb{S}^2} u(\mathbf{x}) \right\Vert^2+1 \right)^2}{4},
\end{equation}
for $\boldsymbol{\xi} = \overline{\mathbf{m}}(\mathbf{z})$. The corresponding mixed Hessian for the extended problem on $T_{\epsilon}$ is:
\begin{equation}\label{eq:mHfulllogcost}
    \left\vert \det D^2_{\mathbf{z} \boldsymbol{\xi}} c_{\sigma}(\mathbf{z}, \boldsymbol{\xi}) \right\vert = \frac{\sigma \left( \left\Vert (1+\phi_{\Gamma}(\mathbf{z})) \nabla_{\Gamma_{\eta}} v(\mathbf{z}) \right\Vert^2+1 \right)^2}{4(1+\phi_{\Gamma}(\mathbf{z}))^2},
\end{equation}
for $\boldsymbol{\xi} = \overline{\mathbf{m}}(\mathbf{z})$.


\subsection{Optimal Transport on Surfaces with Boundary}\label{sec:boundary}

Here we outline how the proposed volumetric approach may be generalized to surfaces with smooth boundaries. 
Let $\Gamma$ be the boundary of a subset $\Omega\subset\mathbb{R}^3$. Let $\Gamma_1$ and $\Gamma_2$ be two open subsets of $\Gamma$ such that 
$\partial\Gamma_1 =\partial\Gamma_2$ is a closed smooth curve and 
\[
\Gamma= \overline{\Gamma_1\cup\Gamma_2}.
\] 
Take $\Gamma$ for a sphere for instance, $\Gamma_1$ and $\Gamma_2$ can be respectively the northern and the southern hemisphere, and $\partial\Gamma_1=\partial\Gamma_2$ is the equator.

Consider the following partitioning of $T_\epsilon$:
\begin{equation*}
    T_\epsilon:= \{ \mathbf{x}\in\mathbb{R}^3: \min_{\mathbf{z}\in\Gamma_1} |\mathbf{x}-\mathbf{z}|<\epsilon\}= T_\epsilon^1\cup Q_\epsilon,
\end{equation*}
where
\begin{equation*}
    Q_\epsilon = \{ \mathbf{z}\in T_\epsilon: P_\Gamma\mathbf{z} \in\partial\Gamma_1  \}.
\end{equation*}

Starting from the optimization problem involving the source and target densities 
$f, g: \Gamma_1\mapsto \mathbb{R}$ satisfying
\[
\text{supp}(f)\subseteq\text{supp}(g)=\Gamma_1,
\]
we will formally arrive at the same optimal transport PDEs defined in \eqref{eq:OTExtension}, but posed in $T_\epsilon^1$ instead of $T_\epsilon$, and with zero Neumann boundary imposed on $\partial T_\epsilon^1$.
Notice that the essential ingredients used in the extension, $P_{\Gamma_1}\equiv P_\Gamma$ and $J_{\Gamma_1}\equiv J_\Gamma$ in $T_\epsilon^1$, so the extended optimal transport problem can be defined without further complication. 

Following the theory developed in \cite{Urbas}\cite{HamfeldtBVP2}, we will then impose the zero Neumann condition on $\partial T_\epsilon^1$. Of course on $\partial T^1_\epsilon\cap\partial T_\epsilon$, we can apply a similar procedure described in \eqref{interp closure}. On the face $\partial  T^1_\epsilon\cap \partial Q_\epsilon$, we can adopt the standard finite difference approach to enforce Neumann condition; e.g. \cite{bedrossian2010second, hellrung2012second}.

\begin{figure}
    \includegraphics[width=0.8\textwidth]{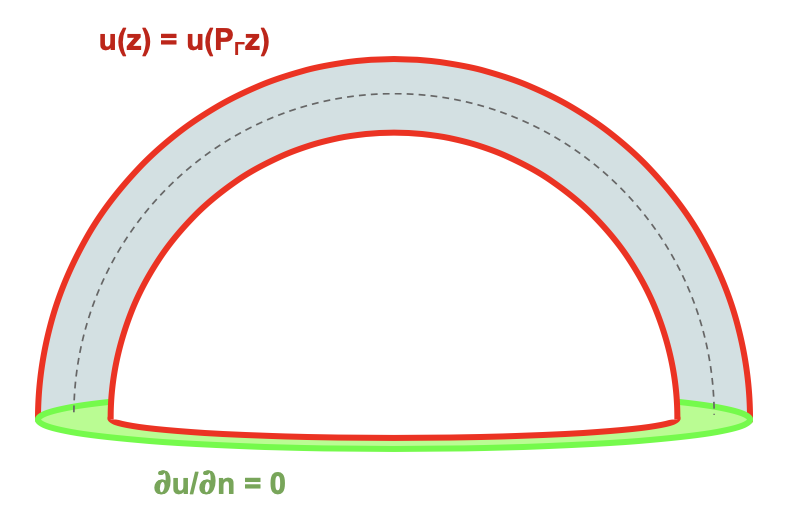}
	\caption{Different boundary conditions are applied for different parts of the boundary. For $\partial T_{\epsilon}^{1} \cap \partial T_{\epsilon}$, we apply the boundary condition $u(\mathbf{z}) = u(P_{\Gamma}\mathbf{z})$ (depicted in red). For the rest of the boundary, we extend the natural zero-Neumann condition on the boundary of $\Gamma$ to zero-Neumann conditions on the corresponding part of $\partial T_{\epsilon}^{1} \cap \partial Q_{\epsilon}$ (depicted in green).}\label{fig:boundary}
\end{figure}


\section{Computational Examples}\label{sec:examples}
The key innovation in reformulating the Optimal Transport problem of $\Gamma$ to one on $T_{\epsilon}$ is to allow for a wide selection of numerical discretizations for the PDE~\eqref{eq:OTExtension} defined on $T_{\epsilon}$. In this section, we present a simple method using uniform Cartesian grids and demonstrate some computational results using the method.

We will discretize \eqref{eq:OTExtension} and compute most of the needed geometric information about $\Gamma$ on the part of a uniform Cartesian grid lying within

\begin{equation*}
\mathcal{T}_{\epsilon}^h = h\mathbb{Z}^3\cap T_\epsilon,
\end{equation*}
which will consist of interior points, where we perform a full PDE solve using the PDE~\eqref{eq:OTExtension} and boundary points, where we exploit the fact that the potential function is \textit{a priori} known to be constant in the normal direction. 

The grid is assumed to resolve the geometry of $\Gamma$, meaning that the spacing $h$ is smaller than the reach of $\Gamma$.  
We will solve the discretized system by the following iterative scheme: for all $\mathbf{z}\in \mathcal{T}_\epsilon^h$,
\begin{equation}\label{eq:momentum-method}
\begin{aligned}
    v^n_E(\mathbf{z}) &:=v^n(\mathbf{z})+\gamma_n\left(v^n(\mathbf{z})-v^{n-1}(\mathbf{z})\right),\\
    v^{n+1}(\mathbf{z}) &:= v^n_{E} -\Delta t F_h(v^n_{E}, \mathbf{z}, \mathcal{N}_h(\mathbf{z}));
\end{aligned}    
\end{equation}

Here $F_h$ is a finite difference discretization of \eqref{eq:OTExtension}-\eqref{eq:bc} and $\mathcal{N}_h(\mathbf{z})$ is the set of grid nodes around $\mathbf{z}$ forming the finite difference stencil for the discretization, and $\gamma_n$ are small positive numbers, chosen to accelerate the scheme.
In Section~\ref{sec:geometry-proc} and Section~\ref{sec:descriptionofscheme}, we will describe the ingredients in $G_h$ used in the numerical experiments in detail.

We will be demonstrating some computational results for the unit sphere, the northern hemisphere of the unit sphere, and for a $2$-torus in $\mathbb{R}^3$.
On the unit sphere, we will employ the squared geodesic cost function $c(\mathbf{x},\mathbf{y}) = \frac{1}{2}d_{\mathbb{S}^2}(\mathbf{x},\mathbf{y})^2$ as well as the logarithmic cost function $c(\mathbf{x}, \mathbf{y}) = -\log(1-\mathbf{x} \cdot \mathbf{y})$ arising in the reflector antenna problem, see~\cite{Wang_Reflector, Wang_Reflector2}. To make the reflector antenna computation more physically realistic, we also perform a computation on the northern hemisphere of the unit sphere using the logarithmic cost function. Finally, to demonstrate the versatility of the extension method over other surfaces, we perform a computation on a torus $\Gamma \subset \mathbb{R}^3$ using the Euclidean cost $c(x,y) = \frac{1}{2}\left\Vert x - y \right\Vert^2$ on the base manifold $\Gamma$. We stress here that we do not use the standard Euclidean cost in $\mathbb{R}^3$, but construct the full extended cost $c_{\sigma}$ using $c(x,y) = \frac{1}{2}\left\Vert x - y \right\Vert^2$ defined for $x, y \in \Gamma$ and add the $\sigma$-penalty term as explained in Section~\ref{sec:costs}.


\subsection{Geometry processing}\label{sec:geometry-proc}
For surfaces defined by level set functions, the computation of the closest point mapping $P_{\Gamma}$ can be done via standard level set methods, see~\cite{ahmed, chengtsai, sethian, zhangzhaoqian}. For surfaces defined by parametrized patches, methods such as those developed in~\cite{tsaidistance} can be used in combination with a KD-tree. As shown in~\cite{tsai}, 
the quantity $J(z) = \sigma_1(z) \sigma_2(z)$, where $\sigma_1$ and $\sigma_2$ are the two largest singular values of $D P_{\Gamma}$, which can be computed using finite differences.

We recognize that, in contrast to working directly with parameterized surfaces, 
the reach of $\Gamma$ imposes an important limitation to the mesh size used to discretize $T_\epsilon,$ independent of the smoothness of the quantities that we wish to compute in $T_\epsilon.$ One may resort to adaptive meshing techniques, e.g. \cite{min2007geometric, min2007second}, for surfaces with small reach.

\subsection{Discretization of the PDEs}\label{sec:descriptionofscheme}

For any function $f: T_{\epsilon} \rightarrow \mathbb{R}$, we use the following standard centered-difference discretization $\mathcal{D}^h_{1}f(\mathbf{x}_i)$, $\mathcal{D}^h_{2}f(\mathbf{x}_i)$, and $\mathcal{D}^h_{3}f(\mathbf{x}_i)$ for the first-order derivatives $f_x, f_y, f_z$ at a point $\mathbf{x}_i$, respectively. Letting $\mathbf{e}_{1}, \mathbf{e}_{2}, \mathbf{e}_{3}$ denote $(1,0,0), (0,1,0)$, and $(0,0,1)$, respectively, we compute our first-order derivatives as follows:
\begin{equation*}
\mathcal{D}^h_{j} f(\mathbf{x}_i) = \frac{f(\mathbf{x}_i+h\mathbf{e}_{j})-f(\mathbf{x}_i-h\mathbf{e}_{j})}{2h}.
\end{equation*}
Correspondingly, $\nabla f$ is approximated by 
\begin{equation}\label{eq:gradient2}
\nabla^h f(\mathbf{x}_i) = \mathcal{D}^h_{1} f(\mathbf{x}_i) \mathbf{e}_1 + \mathcal{D}^h_{2} f(\mathbf{x}_i) \mathbf{e}_2 +\mathcal{D}^h_{3} f(\mathbf{x}_i) \mathbf{e}_3.
\end{equation}
The second-order derivatives are computed by:
\begin{equation*}
\mathcal{D}^{h}_{jj} f(\mathbf{x}_i) = \frac{f(\mathbf{x}_i+h\mathbf{e}_{j})-2f(\mathbf{x}_i)+f(\mathbf{x}_i-h\mathbf{e}_{j})}{h^2},
\end{equation*}
and the following discretization for mixed second-order derivatives:
\begin{multline*}
\mathcal{D}^{h}_{jk} f(\mathbf{x}_i) = \frac{f(\mathbf{x}_i+h(\mathbf{e}_{j}+\mathbf{e}_{k}))+ f(\mathbf{x}_i-h(\mathbf{e}_{j}+\mathbf{e}_{k}))}{h^2} - \\
\frac{f(\mathbf{x}_i+h(-\mathbf{e}_{j}+\mathbf{e}_{k}))+f(\mathbf{x}_i+h(\mathbf{e}_{j}-\mathbf{e}_{k}))}{h^2}.
\end{multline*}
These second-order derivatives are used in the computation of terms  in the determinant of a Hessian matrix:
\begin{equation*}
    \det D^2 F = F_{xx}\left( F_{yy}F_{zz}-F_{yz}^2 \right) - F_{xy}\left( F_{xy}F_{zz}-F_{xz}F_{yz} \right) + F_{xz} \left(F_{xy}F_{yz} - F_{xz}F_{yy} \right).
\end{equation*}
We will use $\det D^2_h F(\mathbf{z})$ to denote the resulting finite difference approximation of $\det D^2 F(\mathbf{z})$.

These finite differences collectively define 
the computational neighborhood $\mathcal{N}^h(\mathbf{x}_i)$ for each grid node 
$\mathbf{x}_i\in \mathcal{T}_\epsilon^h$.
Hence $\mathcal{N}^h(\mathbf{x}_i)$ contains 
 $18$ grid nodes which are no more than a distance $\sqrt{2}h$ from $\mathbf{x}_i$.

The system of PDEs~\eqref{eq:OTExtension} represents four equations in four unknowns: $v$, and the coordinates of $\overline{\mathbf{m}}$. A first possible method would be direct discretization of these four equations. However, since we will be able to solve for the Optimal Transport mapping in terms of the potential function on the base manifold, we opt for the expedient of simply devising a numerical discretization of the first equation in the system of PDEs~\eqref{eq:OTExtension} and deriving an approximation of the mapping which is inspired by Theorem~\ref{thm:maintheorem}. More specifically, we assume that for the given surface $\Gamma$ and cost function $c$, we can solve for the mapping on the base manifold in terms of the potential function on the base manifold in Equation~\eqref{eq:OTGamma2}. Then, for our discretization of the PDE system~\eqref{eq:OTExtension}, we will simply employ Equation~\eqref{eq:Pm_eps=m(P)} to construct the tangential coordinate of the mapping, by using interpolated values of the discrete gradient of the potential function on the base manifold, and then account for how far the mapping goes in the normal direction via the normal derivative using Equation~\eqref{eq:normal-derivative-u_epsilon}.

From Equation~\eqref{eq:OTGamma2}, let $\boldsymbol{\alpha}$ denote the map that satisfies $\mathbf{m} = \boldsymbol{\alpha}(\mathbf{p})$, where $\mathbf{p} = \nabla_{\Gamma}u(\mathbf{x})$. That is, suppose for any desired cost function on $\Gamma$, we have analytically derived an expression for the Optimal Transport mapping $\mathbf{m}$ on $\Gamma$ as a function of the gradient of the potential function $\nabla u(\mathbf{x})$ on $\Gamma$. 

We introduce a trilinear interpolation operator $\mathcal{I}_h$:  $\mathcal{I}_h[v(\mathbf{z})]$ denotes the interpolated values of $v$ at the point $\mathbf{z} \in \Gamma$ using the grid nodes surrounding $\mathbf{z}$.
Of course, if $\mathbf{z}$ is on a grid node, no interpolation is needed.
We will be first using the trilinear operator to approximate the Optimal Transport mapping by performing a trilinear interpolation of the projection of the gradient on a local tangent plane computed at the point $P_{\Gamma}\mathbf{x}_i$ and then solving Equation~\eqref{eq:pde-on-Gamma_eta} for the mapping, for which we assume that we have analytical expressions.

The projection of the finite difference gradient in the normal direction is obtained via the equation
\begin{equation}\label{eq:gradienttangent}
    \mathcal{D}_{\mathbf{n}}^{h} v(\mathbf{x}_i):= \left(\nabla^h v\cdot\mathbf{n}(\mathbf{x}_i)\right)\mathbf{n}(\mathbf{x}_i) \equiv \mathbf{n}\otimes\mathbf{n}(\mathbf{x}_i)  \nabla^h v(\mathbf{x}_i)
\end{equation}
and the projection of the gradient onto the local tangent plane via

\begin{equation}
\mathcal{D}_{\mathbf{n}^{\perp}}^h v(\mathbf{x}_i) = \left(I-\mathbf{n}\otimes\mathbf{n}(\mathbf{x})\right)\nabla^h v(\mathbf{x})
\end{equation}

Then, we will compute the following interpolation of the gradient:
\begin{equation}
    \overline{\mathcal{D}}^{h}_{\mathbf{n}^{\perp}} v(\mathbf{x}_i) := \mathcal{I}_{h} \left[ \mathcal{D}^{h}_{\mathbf{n}^{\perp}} v(P_{\Gamma} \mathbf{x}_i) \right].
\end{equation}

We then define the approximation of the Optimal Transport mapping on $\mathcal{T}_\epsilon^h$ as follows:
\begin{equation}
    \mathbf{m}^h(\mathbf{x}_i) := \boldsymbol{\alpha} \left( \overline{\mathcal{D}}^{h}_{\mathbf{n}^{\perp}} v(\mathbf{x}_i) \right) + \mathbf{n} \left( \boldsymbol{\alpha} \left( \overline{\mathcal{D}}^{h}_{\mathbf{n}^{\perp}} v(\mathbf{x}_i) \right) \right) \left( \phi(\mathbf{x}_i) + \frac{\nabla^{h}v(\mathbf{x}_i) \cdot \mathbf{n}(\mathbf{x}_i)}{\sigma} \right),
\end{equation}
which is constructed so that in the tangential coordinates the mapping is approximated by the Optimal Transport mapping on $\Gamma$ and in the normal coordinates, the mapping simply changes by the normal derivative of the potential function divided by $\sigma$.

Now that we have an approximation of the Optimal Transport mapping, the remainder of the discretization is rather simple. We exploit the fact that in Euclidean space, for all the cost functions $c_{\sigma}(\mathbf{z}_1, \mathbf{z}_2)$ considered in this manuscript, a Euclidean derivative with respect to $\mathbf{z}_1$ is the same as a Euclidean derivative with respect to $\mathbf{z}_2$, with a minus sign.

In this way, we can fully discretize the PDE~\eqref{eq:OTExtension} on $\mathcal{T}_\epsilon^h$. For a point $\mathbf{z}^\prime \in T_{\epsilon}$, define $$U_{\mathbf{\mathbf{z}^\prime}}(\mathbf{z}) := v(\mathbf{z}) + c_{\sigma}(\mathbf{z}, \mathbf{z}^\prime),$$
and
\begin{equation}\label{eq:discretization}
    F^h_0(v(\mathbf{x}_i)) := \det D^2_h U_{\mathbf{m}(\mathbf{x}_{i})}(\mathbf{x}_i)- \left\vert \det D^2_{h} c_{\sigma} (\mathbf{x}_i, \mathbf{m}(\mathbf{x}_i)) \right\vert f(\mathbf{x}_i)/g(\mathbf{m}(\mathbf{x}_i)).
\end{equation}

The finite dimensional system of equations defined above is not closed, because parts of the finite difference stencil for those grid nodes close to $\partial T_\epsilon$ may lie outside of $T_\epsilon$. 
\begin{definition}
Let $\mathbf{x}_i\in h\mathbb{Z}^3$, 
$\mathcal{N}_h(\mathbf{x}_i)\subset h\mathbb{Z}^3$ denotes the set of grid nodes in the finite difference stencil used above.  
Points in 
$$ \mathcal{B}_h:= \{ \mathbf{y}_j\in \mathcal{N}_h(\mathbf{x}_i): \mathbf{x}_i\in T_{\epsilon}\cap h\mathbb{Z}^3,\mathbf{y}_j \notin T_\epsilon  \}$$
 are called boundary points. 
\end{definition}
A two-dimensional schematic showing how the computational grid is chosen, as well as a cross-section of computational points showing the boundary points and interior points, is shown in Figure~\ref{fig:grid}.


To obtained a closed system of equations, we will apply suitable closure to the nodes in $\mathcal{B}_h$. The closure condition need to be consistent to
the boundary condition in Equation~\eqref{eq:bc}. Since the solution $v$ of Equation~\eqref{eq:OTExtension} is \textit{a priori} constant in the normal direction, we elect to enforce the condition 
\begin{equation}\label{interp closure}
    v(\mathbf{y}_j):= \mathcal{I}_h[ v(P_\Gamma \mathbf{y}_j)], ~~~\mathbf{y}_j \in \mathcal{N}_{h}(\mathbf{x}_i)\cap \mathcal{B}_h.
\end{equation}



\begin{figure}
	\subfigure[]{\includegraphics[width=0.3\textwidth]{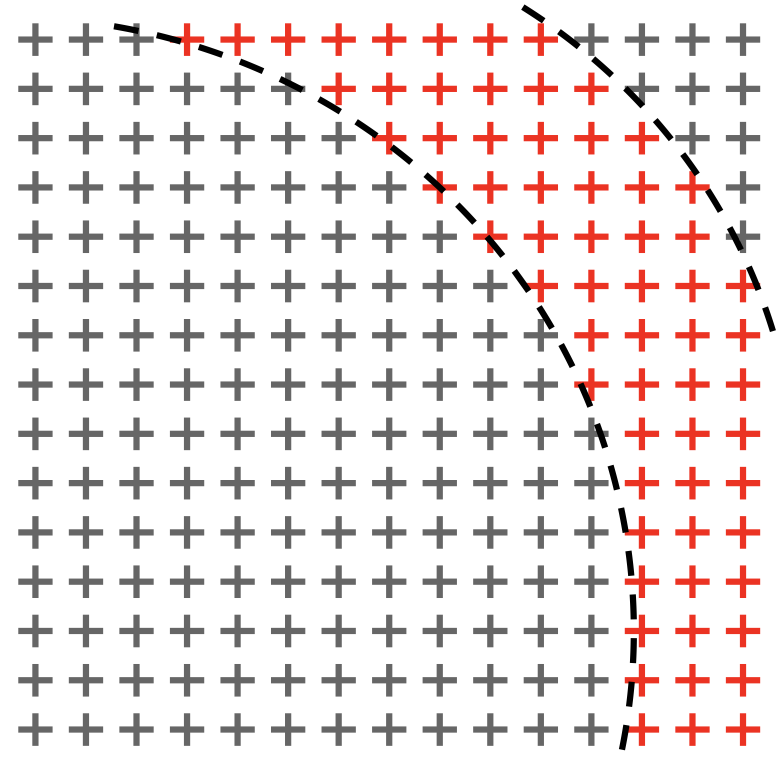}\label{fig:cloud}}
    \subfigure[]{\includegraphics[width=0.6\textwidth]{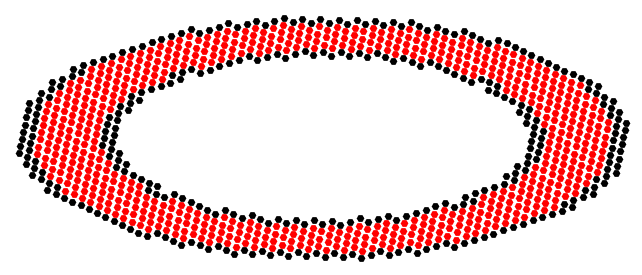}\label{fig:flat2}}
	\caption{
 (a) A schematic of the grid nodes in $\mathcal{T}^{h}_{\epsilon}$ (red); 
     (b) A horizontal cross-section of the computational grid nodes for 
     for the unit sphere,  
     showing the interior computational points 
     in red and the boundary points $\mathcal{B}^h$ in black.}\label{fig:grid}
\end{figure}

We choose to solve the discrete system~\eqref{eq:discretization}-\eqref{interp closure} via Algorithm~\ref{alg:solver}. It 
can be regarded as an accelerated gradient descent method as proposed in~\cite{SchaefferHou}, where we perform extrapolation with the choice of $\gamma(n) = n/(n+n_{0})$, for $n_0\geq 10$, as advocated. 
In practice, we observed that Algorithm~\ref{alg:solver}  without acceleration (setting $\gamma(n) = 0$) is much less efficient.

The iterations in Algorithm~\ref{alg:solver} terminate when the residual reaches a value below a desired tolerance. 
As can be seen from Equation~\eqref{eq:OTExtension}, the solution $v$ is unique up to a constant, since the PDE just depends on the derivatives of $v$. To settle the constant, once the iterations in either Algorithm~\ref{alg:solver} terminate, we find the minimum value of $u^h_K$ and define $v$ as $u^h_K$ minus this minimum value. Thus, the output of the computations is a grid function $v$ whose minimum value is zero.

\begin{algorithm}[h]
\caption{Jacobi-Type Iteration Update with Acceleration}
\label{alg:solver}
\begin{algorithmic}[1]
\State Given $v^h_0$ on $\mathcal{T}^h_\epsilon$, $\Delta t>0$, $\text{tol}>0$
\State $v^h_1(\mathbf{y}_j):= \mathcal{I}_h[ v^h_1(P_\Gamma \mathbf{y}_j)], ~~~\mathbf{y}_j \in \mathcal{B}_h$
\State $v^h_{1}(\mathbf{x}_i) := v^h_0(\mathbf{x}_i) + \Delta t F^h_0 \left( v^h_0(\mathbf{x}_i) \right),~\forall \mathbf{x}_i \in T_\epsilon\cap h\mathbb{Z}^3$ 
\State Set $n=1$
\While{$\max_{\mathbf{x}_i \in T_\epsilon\cap h\mathbb{Z}^3}\left\vert F^h_0 \left(v^h_n(\mathbf{x}_i) \right) \right\vert > \text{tol}$}
\State $\forall \mathbf{x}_i \in T_\epsilon\cap h\mathbb{Z}^3,$
\State $v^h_{n, E}(\mathbf{x}_i) := v^h_n(\mathbf{x}_i) + \gamma(n) (v^h_n(\mathbf{x}_i) - v^h_{n-1}(\mathbf{x}_i)))$, for all $\mathbf{x}_i \in T_\epsilon\cap h\mathbb{Z}^3$
\State $v^h_{n,E}(\mathbf{y}_j):= \mathcal{I}_h[ v^h_{n,E}(P_\Gamma \mathbf{y}_j)], ~~~\mathbf{y}_j \in \mathcal{B}_h$
\State $v^h_{n+1}(\mathbf{x}_i) := v^h_{n, E}(\mathbf{x}_i) + \Delta t F^h_0(v^h_{n,E}(\mathbf{x}_i))$
\State $n:=n+1$
\EndWhile
\State Define $v(\mathbf{x}_i) = v^h(\mathbf{x}_i) - \min_{i \in \mathcal{T}^h_\epsilon} v^h_{K}(\mathbf{x}_i)$.
\end{algorithmic}
\end{algorithm}

\begin{remarkun}

Monotone finite difference schemes are used for discretizing fully nonlinear elliptic PDEs to build convergence guarantees of the discrete solutions even in cases where the solution $v$ is only known to be \textit{a priori} continuous. There is a long line of work on the subject of monotone finite-difference discretizations of fully nonlinear second-order elliptic PDE, see~\cite{BSNum} for the theory showing uniform convergence of viscosity solutions of a monotone discretization of certain elliptic PDE, the paper~\cite{ObermanSINUM} for how the theory allows for the construction of wide-stencil schemes, the paper~\cite{HT_OTonSphere} for a convergence framework for building such monotone discretizations on local tangent planes of the sphere, and~\cite{HT_OTonSphere2} for an explicit construction of such a discretization. While it seems possible to construct a monotone scheme for the extended OT PDE~\eqref{eq:OTExtension}, we defer such development to a future project.
\end{remarkun}

\begin{remarkun}
    For the case of the sphere, $\Gamma = \mathbb{S}^2$, we may utilize known formulas for the mixed Hessian term $\left\vert \det D^2_{\mathbf{z} \boldsymbol{\xi}} c_{\sigma}(\mathbf{z}, \boldsymbol{\xi}) \right\vert$ derived in Section~\ref{sec:sphere}.
\end{remarkun}

\subsection{Computational Complexity}

The number of unknowns in the discrete system of equations for the Optimal Transport PDEs, \eqref{eq:OTExtension},
is asymptotically: 
\begin{equation*}
    |T_\epsilon\cap h\mathbb{Z}^3|\sim \mathcal{O}(\epsilon h^{-3}),~~~h\rightarrow 0.
\end{equation*}
If $\epsilon\sim\mathcal{O}(h^\alpha)$, for some $0\le\alpha\le 1$, the computational size of the discrete system would be
\begin{equation}
    \mathcal{O}(h^{\alpha -3}).
\end{equation}

When $\alpha=1$, the size of the system to be solved is similar to that of a discretized Optimal Transport PDEs defined in a bounded two dimensional domain. 

Formally, the Optimal Tranport PDEs in $T_\epsilon$ are derived for any $0<\epsilon<\tau_\Gamma$, independent of the 
mesh size $h$. 
This unique feature can be viewed both as an advantage as well and a disadvantage.

On the one hand, allowing $\epsilon \ge h$ brings flexibity in designing numerical algorithms, allowing, for example, wider stencil and more accurate interpolation scheme, or
finite element discretizations. On the other hand, larger $\epsilon$ implies a larger nonlinear system needs to be solved. 
In the numerical algorithm presented in this paper, the minimal value of $\epsilon$ is determined by the interpolation scheme. Since we employ trilinear interpolations, $\epsilon$ has to be larger than $\sqrt{3}h$. This means that formally, the computational complexity of solving the extended nonlinear system \eqref{eq:system} is roughly $2\sqrt{3}$ times higher than solving the two dimensional Optimal Transport PDEs.


Ultimately, one could raise the question about the convergence of the computed solution, 
especially as $\epsilon(h)\rightarrow 0$ as $h \rightarrow 0$. There are well-established theories on convergence of Trapezoidal rule based quadrature rules for approximating surface integrals under the proposed framework: see 
\cite{engquist2005discretization}\cite{zhong2023error}
for the regime $\epsilon\sim \mathcal{O}(h)$, and \cite{kublik2016integration, kublik2018extrapolative}\cite{izzo2022convergence,izzo2023high} for the regime $\epsilon \sim\mathcal{O}(h^\alpha),~0\le\alpha<1$.

Regarding the boundary value problem for the Optimal Transport PDE,
since the domain of depends on $\epsilon$, the convergence of the numerical solutions 
requires further analysis, and this line of analysis is not developed in this paper.

We notice that the PDEs \eqref{eq:OTExtension} do not formally depend on $\epsilon$: the $\epsilon$ factors in the ratio $f_{\epsilon}/g_{\epsilon}$ in \eqref{eq:OTExtension} cancel out. Therefore, there is no formal $\epsilon$ dependence in the finite difference scheme discussed earlier. However,
the closest point projection operator, $P_\Gamma$, implicitly depend on $\epsilon$ (recall that $P_\Gamma\mathbf{z}= (I-\phi_\Gamma(\mathbf{z})\nabla\phi_\Gamma)(\mathbf{z})$ and $|\phi_\Gamma(\mathbf{z})|<\epsilon$). Nevertheless, in practice, we observe that for a fixed $h$, $\epsilon=Ch^{\alpha}$, and $\epsilon=Ch$, the proposed discretization yields relatively accurate numerical solutions, see Section~\ref{sec:studies}. 

\subsection{Computational Results}

All computations in this section were performed using Matlab R2021b on a 2017 MacBook Pro, with a $2.3$ GHz Dual-Core Intel Core i$5$ and $16$ GB of $2133$ MHz LPDDR3 memory. 

In all of the computations, we used Algorithm~\ref{alg:solver} and initialized with the constant function $u^h_0 = 1$ and chose $\gamma(n) = (n+1)/(n+15)$ as per the recommendation in~\cite{SchaefferHou}. We perform a variety of computations on different surfaces, including the unit sphere, the northern hemisphere of the unit sphere, and a $2$-torus in $\mathbb{R}^3$. On the unit sphere, we use the squared geodesic cost function $c(\mathbf{x},\mathbf{y}) = \frac{1}{2}d_{\mathbb{S}^2}(\mathbf{x},\mathbf{y})^2$ and the logarithmic cost function $c(\mathbf{x}, \mathbf{y}) = -\log(1 - \mathbf{x} \cdot \mathbf{y})$ arising in the reflector antenna problem. For the reflector antenna problem, validation is typically done using a ray-tracing software to get a picture of the resulting target light illumination, see~\cite{castroreflector, point2fflens}. Since the main objective of this paper is to demonstrate the flexibility of our method over a variety of cost functions, we choose to perform a validation test for the logarithmic cost by comparing the computed solution to the solution of the PDE derived in Appendix~\ref{sec:exact} for a particular case. We supplement this with a numerical and qualitative validation of how well the pushforward constraint is satisfied. We also perform computations on the northern hemisphere of the unit sphere using the logarithmic cost. This example is done to show how the extension method can be used to perform computations on surfaces with boundaries, but also because it is a more physically relevant computation of the reflector antenna problem. Finally, we perform a computation on a torus $\Gamma \subset \mathbb{R}^3$, using the extended cost $c_{\sigma}(\mathbf{z}_1, \mathbf{z}_2) = \frac{1}{2} \left\Vert P_{\Gamma}\mathbf{z}_1 - P_{\Gamma}\mathbf{z}_2 \right\Vert^2 + \frac{\sigma}{2} (\phi(\mathbf{z}_1 - \mathbf{z}_2)^2$.

We point out that even though we have chosen to show the results of many of our computations by visualizing them on the base manifold $\Gamma$, all computations were performed by discretizing the extended Optimal Transport problem on $T_{\epsilon}$ as outlined in Section~\ref{sec:descriptionofscheme}.

\subsubsection{Comparison with the analytical solution on the sphere}\label{sec:analytical}

In Appendix~\ref{sec:exact}, for the sphere $\Gamma = \mathbb{S}^2$, we show the that if the source and target density satisfy:
\begin{align}\label{eq:knownsolution}
    f(x, y, z) &= \frac{\sin \left( \arccos(z) - \frac{\sin(\arccos(z))}{a_0}  \right)}{4 \pi \sin(\arccos(z))} \left(1 - \frac{z}{a_0} \right), \\
    g(x,y,z) &= \frac{1}{4 \pi},
\end{align}
for $a_0 = 3$, then if the cost function on $\mathbb{S}^2$ is the squared geodesic cost, then the Optimal Transport potential function equals:
\begin{equation}\label{eq:exact1}
    u(x,y,z) = \frac{z}{a_0}.
\end{equation}

Using $\epsilon = 0.2$, $h=0.05$ and $\sigma = 8$, we run Algorithm~\ref{alg:solver} and show that the residual and the $L^{\infty}$ error of the computed potential function with the known solution in Equation~\eqref{eq:exact1} converges. The $L^{\infty}$ error is computed by performing a trilinear interpolation of the computed potential function onto the sphere and then comparing it with the known analytical solution~\eqref{eq:exact1}. The results are summarized in Figure~\ref{fig:exactres}.
\begin{figure}
	\subfigure[]{\includegraphics[width=0.48\textwidth]{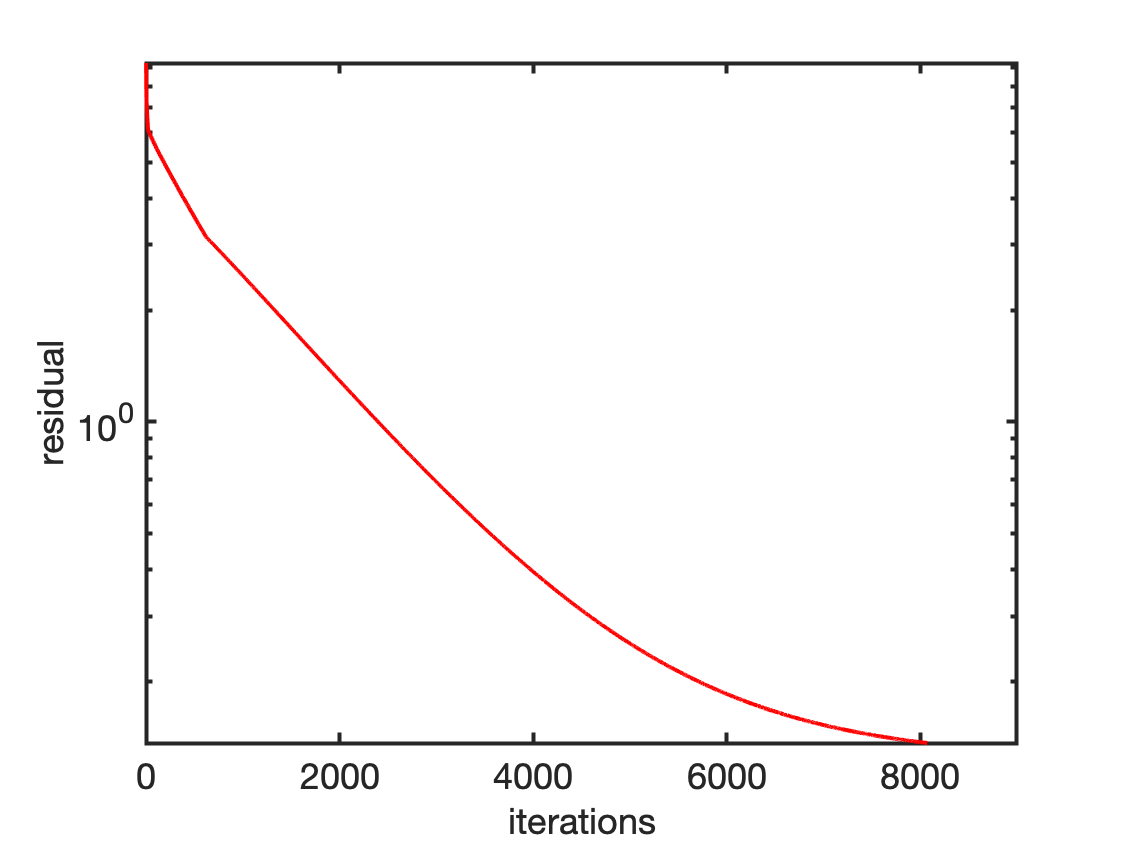}}
    \subfigure[]{\includegraphics[width=0.48\textwidth]{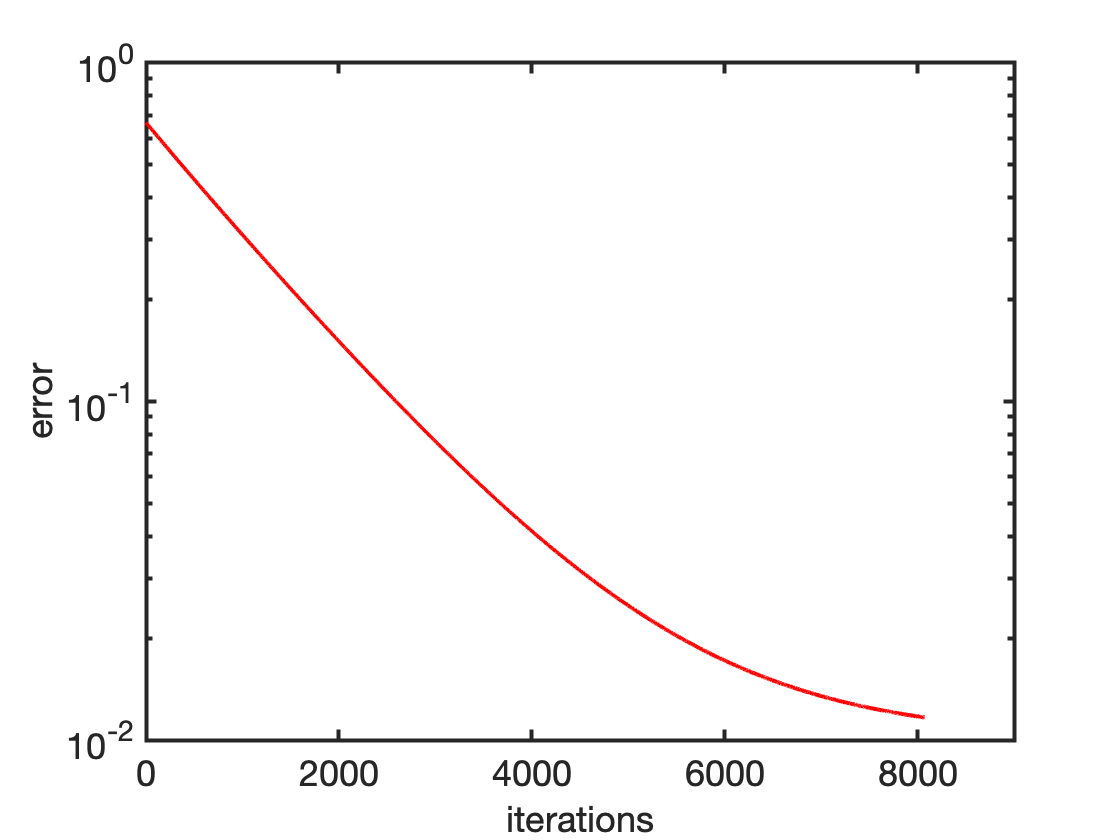}}
	\caption{(a) Convergence of the residual and (b) $L^{\infty}$ error.
 }\label{fig:exactres}
\end{figure}

We can, furthermore, test to see how well the pushforward constraint $\int_{\mathbb{S}^2}f(x)dx = \int_{\mathbb{S}^2}g(y)dy$ is satisfied with $a_0 = 2$. Since the mapping in this example is axisymmetric, we choose to fix a value $\theta_0$ and compute $T(\phi, \theta_0)$, which is approximately axisymmetric. For a fixed $\theta_0$ we then compare the values of the following integrals:
\begin{align}\label{eq:ibeforeafter}
    &\int_{0}^{2 \pi} \int_{0}^{\theta_0} f(\phi, \theta) d\theta d\phi, \\
    &\int_{0}^{2 \pi} \int_{0}^{T(\phi, \theta_0)} g(\phi, \theta) d\theta d\phi.
\end{align}

Since computational points on the grid $\mathcal{T}^h_\epsilon$ do not exactly equal $\theta_0$, we take a thin band around $\theta_0$, which becomes a thin band $T(\phi, \theta_0)$. The angle $\theta_0$ and $T(\phi, \theta_0)$ are then approximated by taking the average $\theta$-value in the respective bands. We can therefore approximate the integrals in Equation~\eqref{eq:ibeforeafter}. The result is summarized in Table~\ref{table:pushforward} and see that the pushforward constraint is approximately satisfied.
\begin{table}[h!]
\centering
\caption{Comparing $\int f(x)dx$ to $\int g(y)dy$ to see how well the pushforward constraint is satisfied.}
\label{table:pushforward}
\begin{tabular}{||c||c|c|c|c|c|c|c||} 
 \hline
 $\theta$ & $\pi/8$ & $2\pi/8$ & $3\pi/8$ & $4\pi/8$ & $5\pi/8$ & $6\pi/8$ & $7\pi/8$ \\ 
 \hline
  $\int f(x)dx$ & $0.0118$ & $0.0492$ & $0.1232$ & $0.2580$ & $0.4632$ & $0.7062$ & $0.9165$  \\ 
 \hline
  $\int g(y) dy$ & $0.0101$ & $0.0461$ & $0.1227$ & $0.2603$ & $0.4656$ & $0.7085$ & $0.9172$ \\
 \hline
\end{tabular}
\end{table}

We can also visualize the local change of area achieved by the computed pushforward map by seeing how a generated mesh is transformed via the pushforward map, see Figure~\ref{fig:pushforward}. Keep in mind that the source and target densities are given by Equation~\eqref{eq:knownsolution} and thus, the pushforward map should take the mesh and increase density around the north pole and decrease density around the south pole.

\begin{figure}
    \subfigure[]{\includegraphics[width=0.495\textwidth]
    {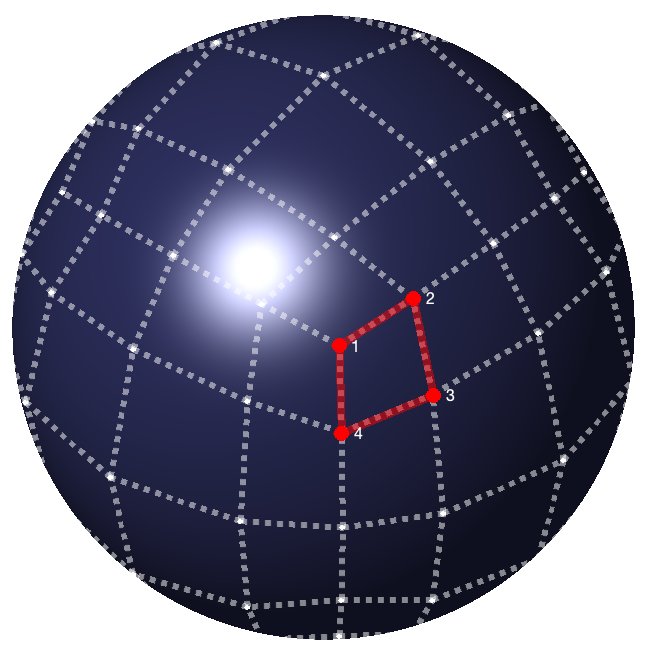}}
    \subfigure[]{\includegraphics[width=0.495\textwidth]{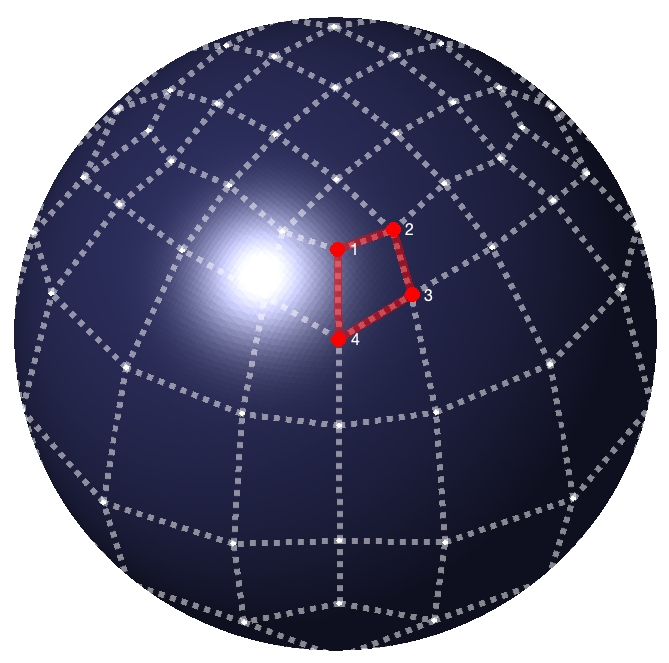}}    
	\caption{Visualization of the changes in the local density changes via a set of anchor points. The anchor points are connected to form a mesh over the sphere.
    (a) The initial distribution of anchor points; 
    (b) The distribution of the transported anchor points. 
    We highlight four connected vertices in red for comparison. The mass density in this computational example is required to increase in the northern hemisphere and decrease in the southern hemisphere. Details of this example can be found in Section~\ref{sec:analytical}}\label{fig:pushforward}
\end{figure}

\subsubsection{Studies with Varying $\sigma$, $\epsilon$, and $h$}\label{sec:studies}

In this section, we explore the effect of the ratio $\epsilon/h$ and the penalty parameter $\sigma$ using the test example from Section~\ref{sec:analytical}.

In our first study, we compare the error achieved after running Algorithm~\ref{alg:solver} for different values of $\epsilon/h$ with $\sigma=8$ and the source and target densities given by~\eqref{eq:knownsolution}. The runtime of the algorithm is, of course, much longer the higher the value of $\epsilon/h$, but the accuracy gets better as $\epsilon/h$ increases. The results are summarized in Table~\ref{table:epsilonoverh}.

\begin{table}[h!]
\centering
\caption{The $L^{\infty}$ error of the computed potential function to a known analytical solution decreases as a function of $\epsilon/h$.}
\label{table:epsilonoverh}
\begin{tabular}{||c||c|c|c|c|c||} 
 \hline
 $\epsilon/h$ & 2 & 3 & 4 & 5 \\ 
 \hline
 Error & $0.0625$ & $0.0123$ & $0.0102$ & $0.0089$ \\ 
  \hline
\end{tabular}
\end{table}



Next, we examine what happens when we vary $\sigma$, the penalty parameter, when $\epsilon=0.2$ and $h=0.05$, computing error using the analytical solution from Section~\ref{sec:analytical}. The results in Table~\ref{table:sigma} indicate that for ``small" values of $\sigma$, the main difficulty that the number of iterations required to reach a certain error is relatively large. For ``large" values of $\sigma$, the time step size has to be taken much smaller. Thus, we advocate for choosing $\sigma$ in the ``sweet spot" which is around $\sigma \approx 10$. This is why we often choose $\sigma=8$ in our computations. Table~\ref{table:sigma} summarizes the results of this study.

\begin{table}[h!]
\centering
\caption{Varying $\sigma$ and observing the number of iterations to reach a certain $L^{\infty}$ error. Note that the time step size must also change to avoid blowup of the algorithm.}
\label{table:sigma}
\begin{tabular}{||c||c|c|c|c|c||} 
 \hline
 $\sigma$ & $0.25$ & $2$ & $8$ & $16$ & $64$ \\ 
 \hline
 Error & 0.03798 & 0.01658 & 0.01165 & 0.01488 & 0.03185 \\ 
 \hline 
 Iterations & 27089 & 27945 & 8074 & 9257 & 8964 \\ 
 \hline
 $\Delta t$ & 0.0001 & 0.000025 & 0.000025 & 0.00001 & 0.0000025 \\  
  \hline
\end{tabular}
\end{table}

In our third study, we examine three different cases of the scaling as $\epsilon$ depending on $h$, to show convergence to the known solutions as $h \rightarrow 0$. In all cases, $\sigma=8$. In Table~\ref{table:tworootthree}, we set $\epsilon/h = 2\sqrt{3}$ and observe convergence in error as $h \rightarrow 0$. In this case, the scaling $\epsilon/h = \mathcal{O}(1)$, so the computational complexity scales as a two-dimensional discretization. We demonstrate that this scaling does well as $h \rightarrow 0$. The results are also summarized in Figure~\ref{fig:tworootthree}.

\begin{table}[h!]
\centering
\caption{Maintaining the ratio $\epsilon/h = 2\sqrt{3}$, we see convergence in the error.}
\label{table:tworootthree}
\begin{tabular}{||c||c|c|c|c|c||} 
 \hline
 $\epsilon$ & 0.25 & 0.2 & 0.15 & 0.1 & 0.07 \\ 
 \hline
 $h$ & $0.0686$ & $0.0577$ & 0.0433 & 0.0274 & 0.0192 \\ 
 \hline 
 Error & $0.0248$ & $0.0208$ & 0.0130 & 0.0074 & 0.0058 \\ 
 \hline
\end{tabular}
\end{table}

In Table~\ref{table:fixedepsilon}, we fix $\epsilon=0.2$ and vary $h$ and observe convergence of the error. In all cases $\sigma=8$. This method of fixing $\epsilon$ and shrinking $h$ scales as a true three-dimensional discretization. The case $\epsilon=0.2$ and $h=0.021$, for example involves $549322$ points in computation and, as such, is very slow. In this case, we can analytically prove the consistency of the numerical discretization, but caution its use in practice, do to the expense of computation. The results are also summarized in Figure~\ref{fig:tworootthree}.

\begin{table}[h!]
\centering
\caption{Fixing $\epsilon=0.2$ and shrinking $h$, we see convergence in the error.}
\label{table:fixedepsilon}
\begin{tabular}{||c||c|c|c|c|c||} 
 \hline
 $h$ & 0.079 & 0.061 & 0.045 & 0.031 & 0.021 \\ 
 \hline
 Error & 0.0270 & 0.0190 & 0.0129 & 0.0094 & 0.0076 \\ 
 \hline 
\end{tabular}
\end{table}

Next, we fix $\epsilon = Ch^{p}$ for $p=0.75$ and $C=1.6822$, which is an intermediate case of the scaling of the discretization, where the complexity scales between two-dimensional and three-dimensional methods. We set $\sigma=8$ in all test cases. The results are shown in Table~\ref{table:chp} and visualized in Figure~\ref{fig:tworootthree}.

\begin{table}[h!]
\centering
\caption{Setting $\epsilon=1.6822h^{0.75}$ and shrinking $h$, we see convergence in the error.}
\label{table:chp}
\begin{tabular}{||c||c|c|c|c|c||} 
 \hline
 $\epsilon$ & 0.25 & 0.2 & 0.15 & 0.1 & 0.08 \\ 
 \hline
 $h$ & $0.0787$ & $0.0585$ & 0.0398 & 0.0232 & 0.0172 \\ 
 \hline 
 Error & $0.0218$ & $0.0162$ & 0.0106 & 0.0060 & 0.0036 \\ 
  \hline
\end{tabular}
\end{table}

\begin{figure}
	\subfigure[]{\includegraphics[width=0.6\textwidth]{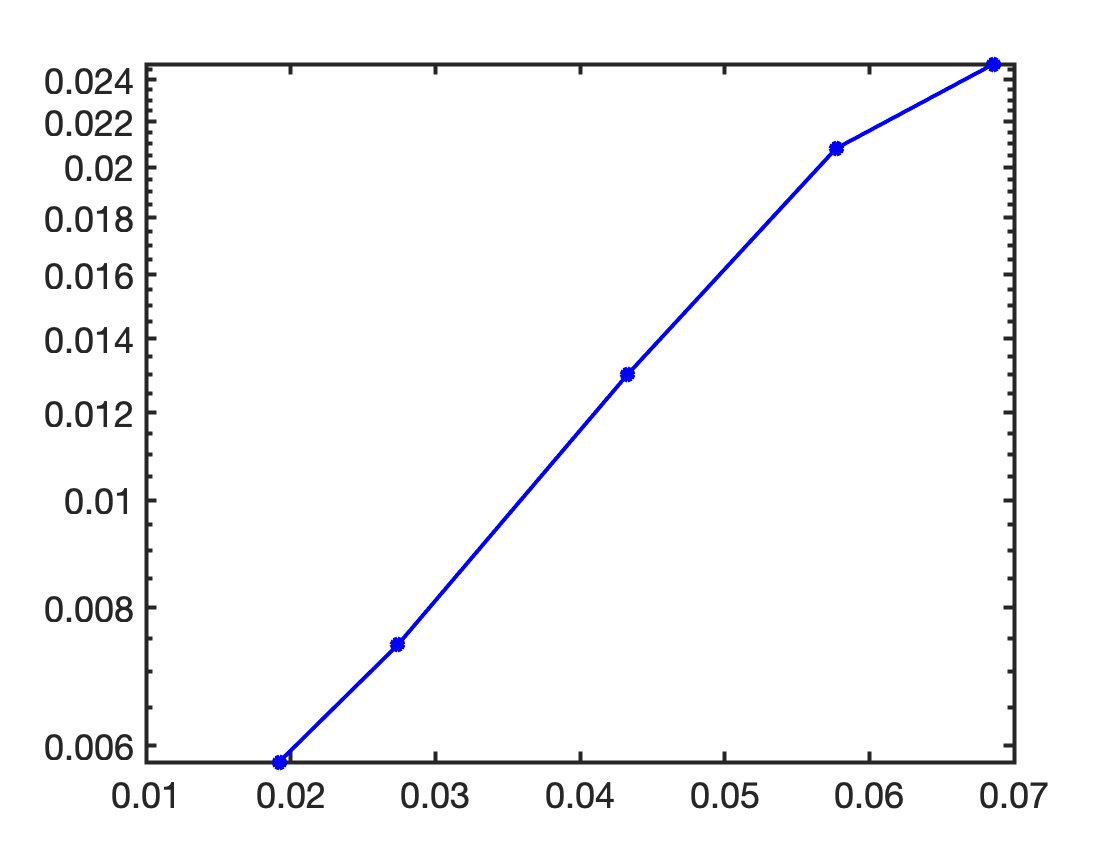}}
    \subfigure[]{\includegraphics[width=0.6\textwidth]{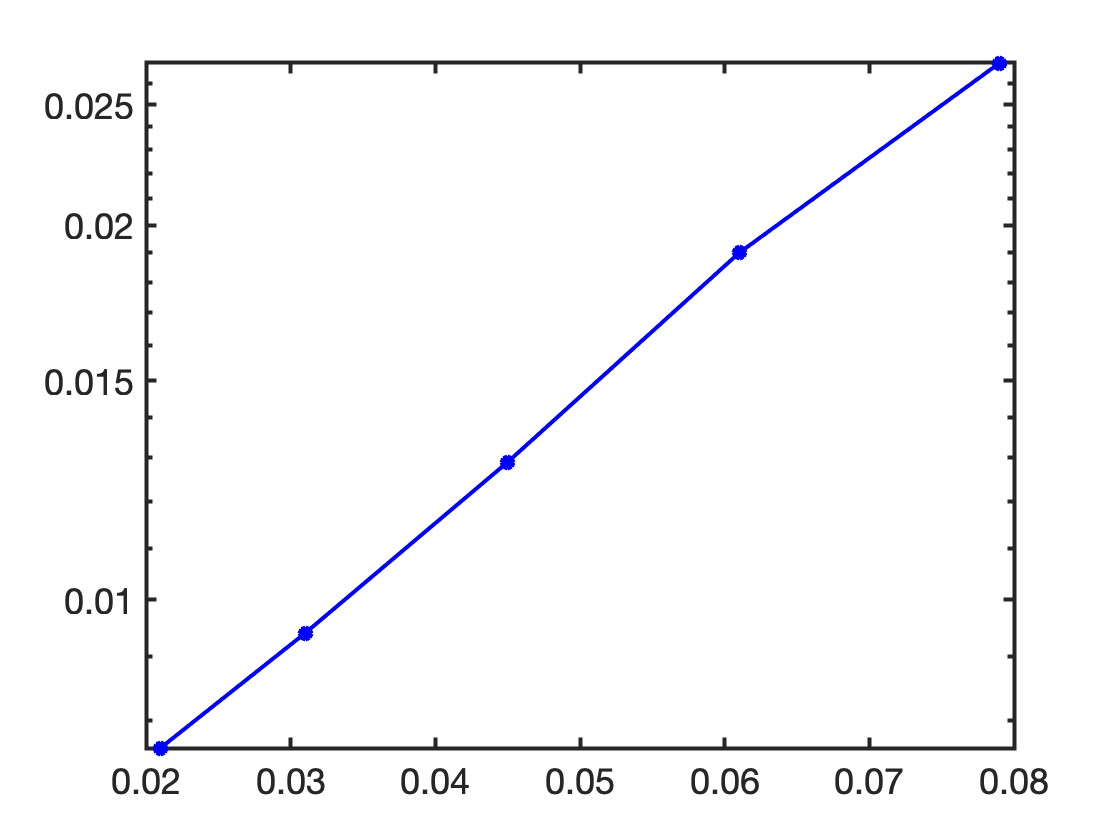}}
    \subfigure[]{\includegraphics[width=0.6\textwidth]{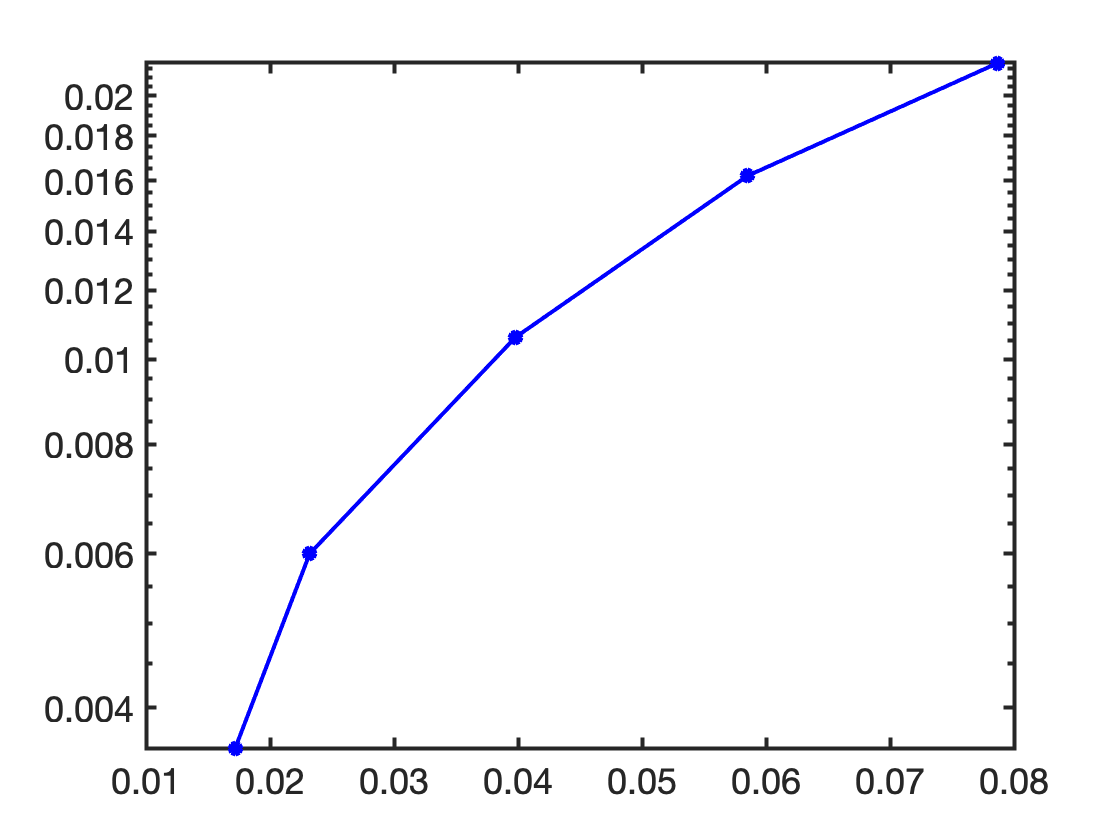}}
	\caption{The convergence of the error ($y$-axis) as a function of $h$ ($x$-axis) for (a) $\epsilon/h = 2 \sqrt{3}$, (b) for a fixed $\epsilon$, and (c) as a function of $h$ for $\epsilon=1.6822 h^{3/4}$.}\label{fig:tworootthree}
\end{figure}

\subsubsection{Optimal transport on a sphere: from North Pole to South Pole}
For this example, we have chosen $\epsilon=0.2$ and $h = 0.1$ and performed the computation on a grid of $5038$ points with $\sigma=1$. We performed the computation for squared geodesic cost function $c(\mathbf{x}, \mathbf{y}) = \frac{1}{2}d_{\mathbb{S}^2}(\mathbf{x}, \mathbf{y})^2$ and for the following source and target density functions for Cartesian coordinates $(x,y,z)$:
\begin{align*}
f(x,y,z) &= (1-\beta_1) \frac{1}{2\alpha_1} e^{-4(\arccos{z}-\frac{1}{10})^2} + \frac{\beta_1}{4 \pi}, \\
g(x,y,z) &= (1-\beta_2) \frac{1}{2\alpha_2} e^{-3(\arccos{z}-\pi + \frac{3}{10})^2} + \frac{\beta_2}{4 \pi},
\end{align*}
where $\beta_1 = 0.4$, $\beta_2 = 0.3$ $\alpha_1 = 1.042$, and $\alpha_2 = 2.089$, and whose extended densities are then derived by using Equation~\eqref{eq:densities}. Figure~\ref{fig:e2globe} shows the source and target densities on $\mathbb{S}^2$ and the computed potential function with a quiver plot projected to $\mathbb{S}^2$ showing the direction of the gradient of the potential function overlaid on top of a world map outline. We chose the world map to allow the reader to more easily visualize the location of the mass density concentrations. It should be clear from formulation of the Optimal Transport problem in Equation~\eqref{eq:OT} that in order to preserve mass the source mass located at the north pole needs to move to the target mass located at the south pole. In order to achieve this, the potential function must have a gradient near the north pole that points due south (towards the target mass distribution). Correspondingly, the direction of the mapping should point due south as well. This is precisely what we observe in Figure~\ref{fig:e2globe}.

Analytical and computational problems arise for solving the Optimal Transport PDEs when the target mass density $g$ approaches zero in the computation domain.  For one, the monotone discretizations on the sphere \cite{HT_OTonSphere2} requires that $g$ be bounded away from zero.
For this computational example, we have $h=0.1$ and we can take the parameter $\beta_1$ smaller and smaller, for example, down to at least $\beta_1 = 0.01$ without much of an effect on the number of iterations to reach a given tolerance. However, from numerical experiments, we have found that the Algorithm works fine above $\beta_2 =0.05 = h/2$, but not below this threshold, since the algorithm became unstable.




\begin{figure}
	\subfigure[]{\includegraphics[width=0.48\textwidth]{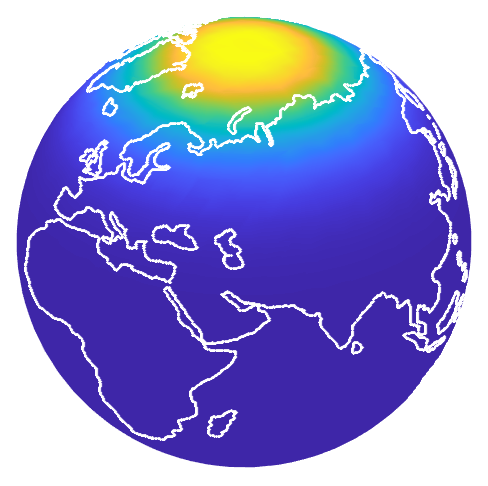}}
	\subfigure[]{\includegraphics[width=0.48\textwidth]{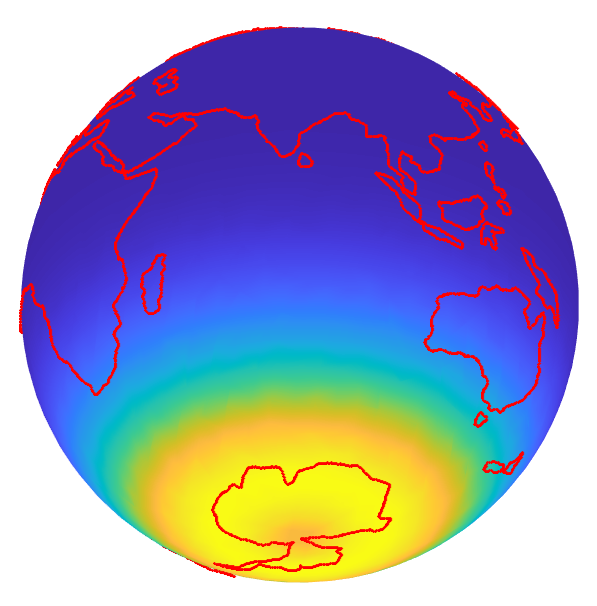}}
    \subfigure[]{\includegraphics[width=0.8\textwidth]{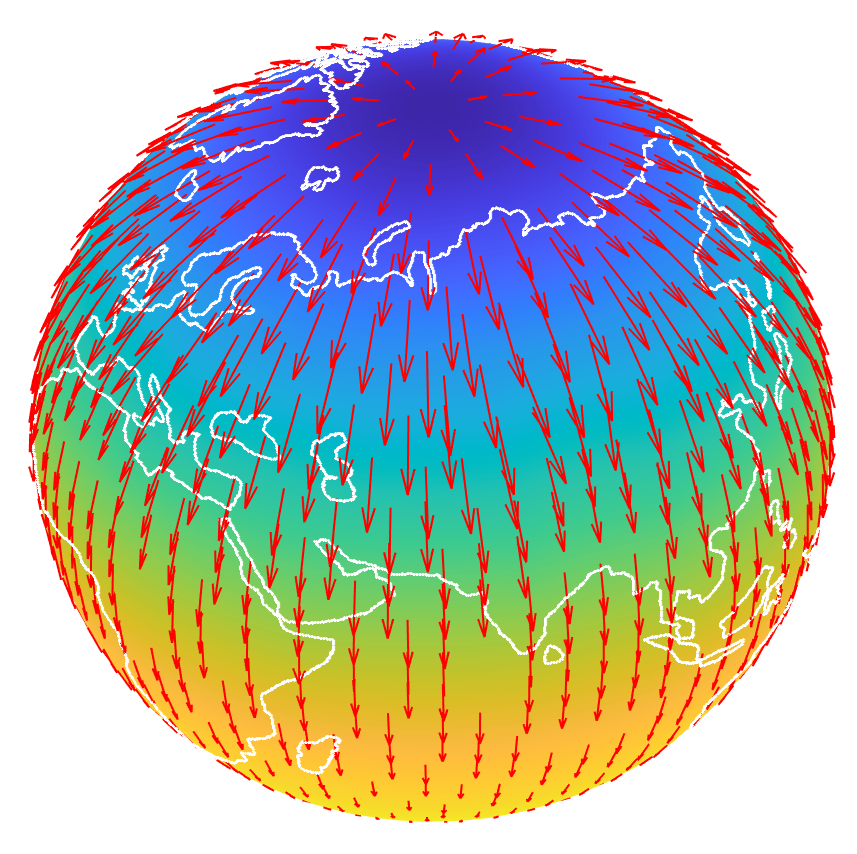}}
	\caption{(a) Source density at the north pole moves to (b) target density located at the south pole and (c) the resulting computed potential function with the direction of the gradient (red arrows) with an outline of a world map overlaid to enhance visualization.}\label{fig:e2globe}
\end{figure}


\subsubsection{Peanut Reflector}

In this example, we perform an Optimal Transport computation using the logarithmic cost $c(\mathbf{x}, \mathbf{y}) = -\log(1 - \mathbf{x} \cdot \mathbf{y})$ arising in the reflector antenna problem, with $\epsilon=0.2$ and $h=0.1$ and $\sigma=1$. In the reflector antenna problem, the computed potential function $u_{\epsilon}(\mathbf{z})$ is then used to construct the shape of a reflector $\rho$ via the expression:
\begin{equation}\label{eq:reflectorshape}
    \rho = P_{\mathbb{S}^2}(\mathbf{z})e^{-u_{\epsilon}(\mathbf{z})}, \ \ \ \mathbf{z} \in T_{\epsilon}.
\end{equation}

Note that the reflector is a sphere only when $u_{\epsilon}$ is constant. A constant potential function is the solution of the Optimal Transport problem on $\mathbb{S}^2$ only when $f = g$. In the reflector antenna problem, light originating at the origin (center of the sphere) with a given directional light intensity pattern $f(\mathbf{x})$ impinges off a reflector with shape given in Equation~\eqref{eq:reflectorshape}, and then travels in a direction $\mathbf{m}(\mathbf{x})$, yielding a far-field light intensity pattern $g(\mathbf{m}(\mathbf{x}))$. Conservation of light intensity, the physical laws of reflection, and a change of variables allow one to solve for the reflector shape via the function $u$ in Equation~\eqref{eq:reflectorshape} by solving for $u$ in the Optimal Transport PDE in Equations~\eqref{eq:OTGamma1} and~\eqref{eq:OTGamma2} with the logarithmic cost $c(\mathbf{x}, \mathbf{y}) = -\log(1 - \mathbf{x} \cdot \mathbf{y})$.

Strictly speaking, a reflector antenna should only be a hemisphere, since the intention is to redirect directional light intensity from the origin (inside the reflector) to the far-field outside the reflector. In order to do this, light must escape, and so the reflector cannot entirely envelop the origin. Temporarily putting physical realities aside, we compute the shape of the reflector for light intensity functions $f$ and $g$ with support equal to $\mathbb{S}^2$, demonstrating that the computation can be done for the entire sphere for the cost function $c(\mathbf{x}, \mathbf{y}) = -\log(1-\mathbf{x} \cdot \mathbf{y})$. We use a source density function which resembles a headlights of a car projected onto a sphere and a constant target density. The densities and the resulting ``peanut-shaped" reflector is shown in Figure~\ref{fig:reflector}. This computation was inspired by the work in~\cite{RomijnSphere} and was also done with a different discretization in~\cite{HT_OTonSphere3}.

\begin{figure}
	\subfigure[]{\includegraphics[width=0.48\textwidth]{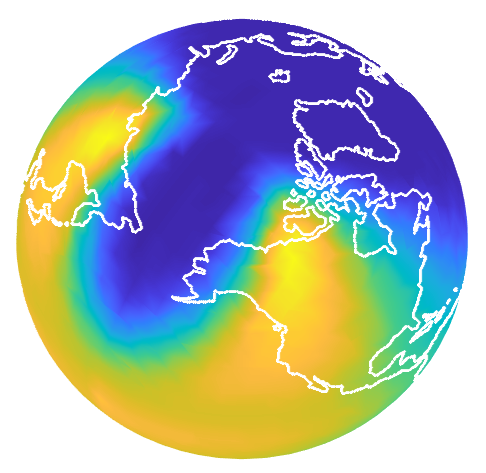}\label{fig:peanutlensf}}
	\subfigure[]{\includegraphics[width=0.48\textwidth]{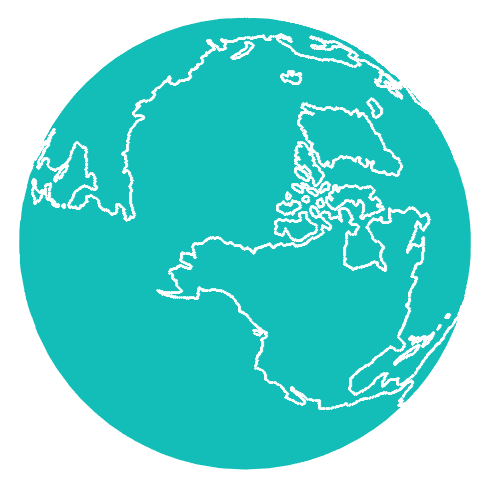}\label{fig:gconstant}}
    \subfigure[]{\includegraphics[width=0.8\textwidth]{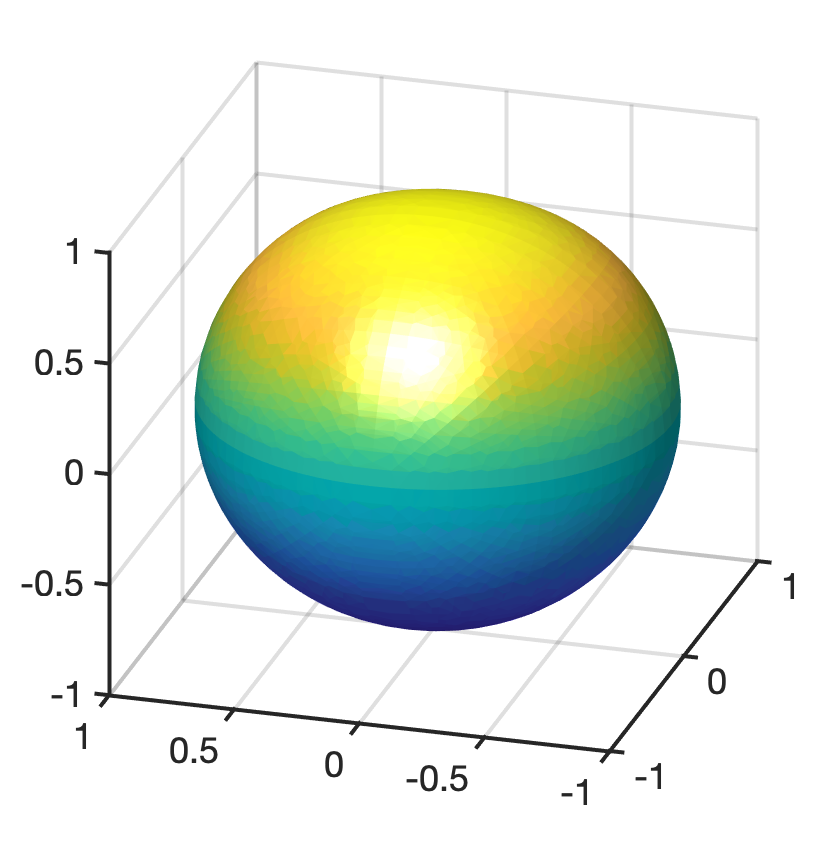}\label{fig:e3reflector2}}
	\caption{(a) Source density; (b) target uniform density; (c) the resulting reflector shape. Note that the reflector is not a sphere, but rather the shape given in Equation~\eqref{eq:reflectorshape}.}\label{fig:reflector}
\end{figure}

We perform another computation using the logarithmic cost function in order to compare with an exact solution. According to Appendix~\ref{sec:exact}, if the source and target densities satisfy:
\begin{align}
    f(x,y,z) &= \frac{\sin \left( \pi - \arccos(z) - \arccos \left( \frac{1-z^2-a_0^2}{1-z^2+a_0^2} \right) \right)}{4 \pi \sqrt{1-z^2}} \left(-1 + \frac{2a_0 z}{1-z^2+a_0^2} \right), \\
    g(x,y,z) &= \frac{1}{4 \pi},
\end{align}
then the potential function is known analytically to be:
\begin{equation}
u(x,y,z) = \frac{z}{a_0}.
\end{equation}

We perform our computation using Algorithm~\ref{alg:solver}, with $\epsilon=0.2$, $h=0.05$ and $\sigma=8$ and the results are shown in Figure~\ref{fig:reflectorantenna}.

\begin{figure}
	\subfigure[]{\includegraphics[width=0.48\textwidth]{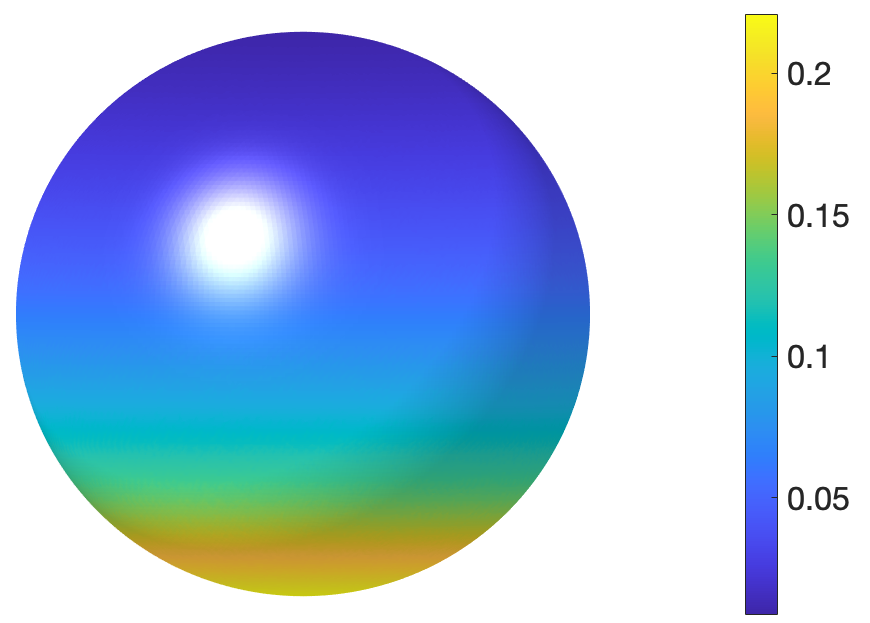}\label{fig:reflectorf}}
	\subfigure[]{\includegraphics[width=0.46\textwidth]{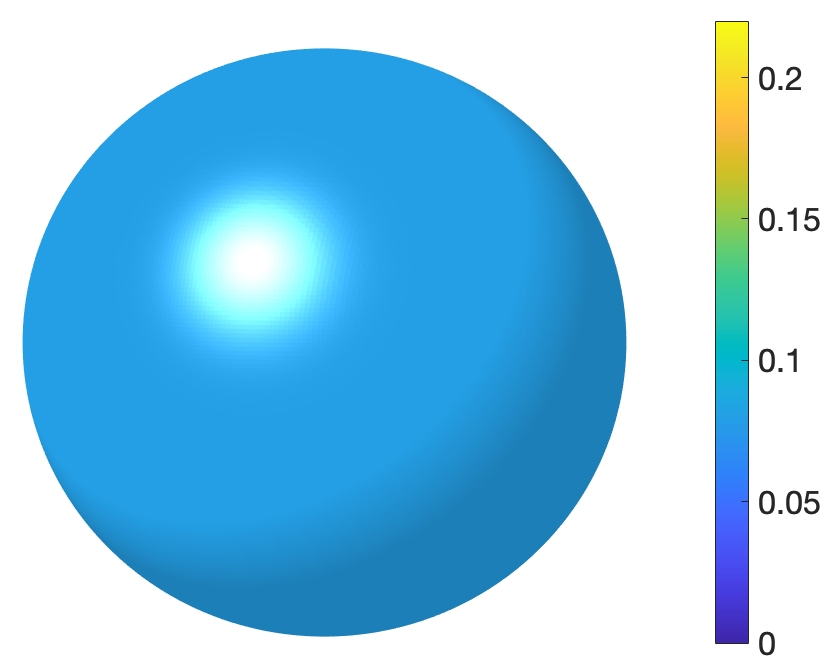}\label{fig:reflectorg}}
    \subfigure[]{\includegraphics[width=0.48\textwidth]{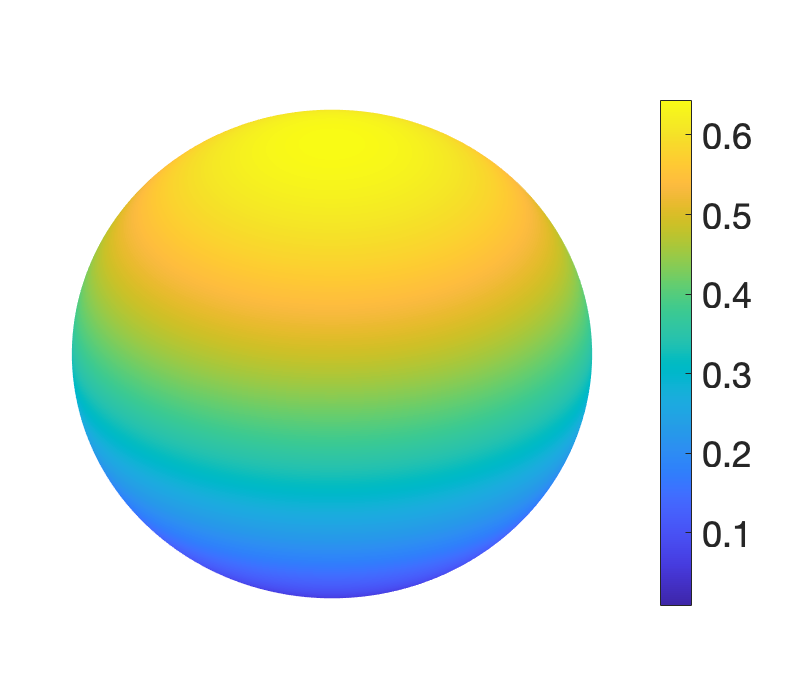}\label{fig:reflectoru}}
    \subfigure[]{\includegraphics[width=0.48\textwidth]{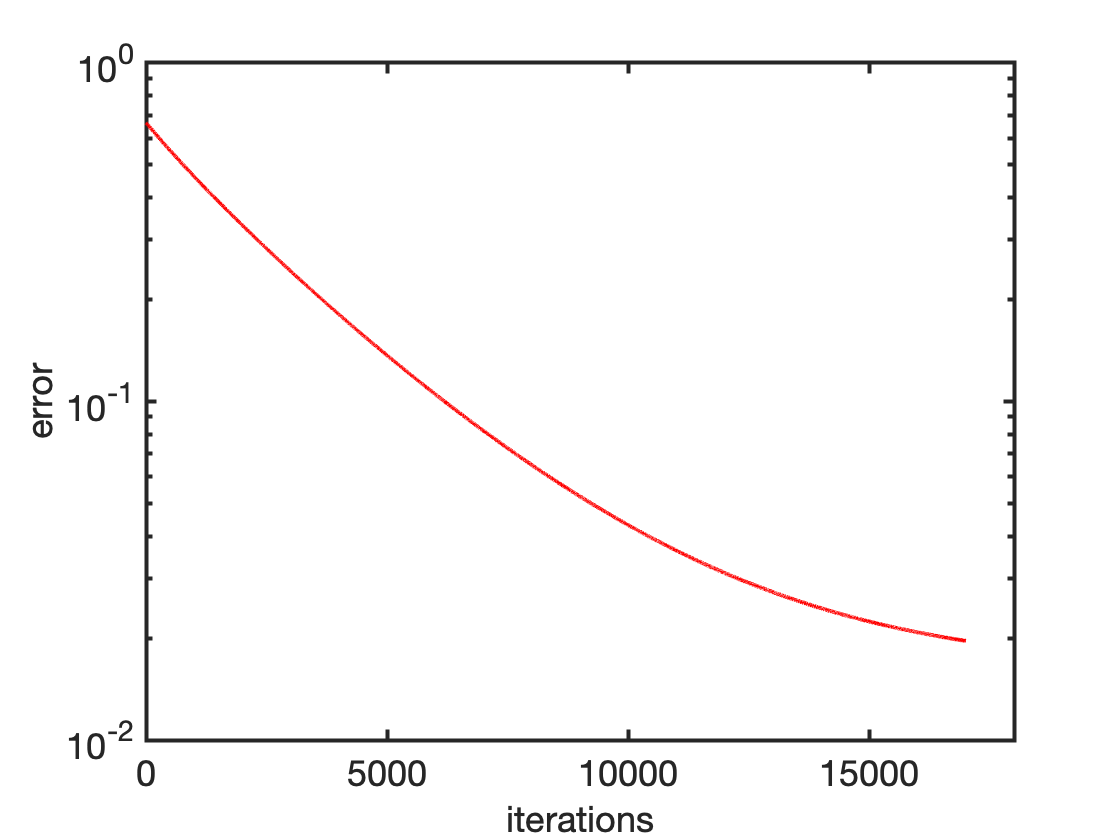}\label{fig:reflectorerror}}
	\caption{(a) Side view of the source density, (b) side view of the target density, (c) computed potential function and (d)  convergence of error.}\label{fig:reflectorantenna}
\end{figure}

\subsubsection{Non-Lipschitz target density}

For this example, we have chosen $\epsilon=0.2$ and $h = 0.1$ performed the computation on a grid of $5014$ points with $\sigma=1$. We perform the computation for the squared geodesic cost function $c(\mathbf{x}, \mathbf{y}) = \frac{1}{2}d_{\mathbb{S}^2}(\mathbf{x}, \mathbf{y})^2$ and for the following source and target density functions for Cartesian coordinates $(x,y,z)$: 
\begin{align*}
f(x,y,z) &= (1-\beta_1) \frac{1}{\alpha_1} e^{-5\arccos{x}^2} + \frac{\beta_1}{4 \pi}, \\
g(x,y,z) &= (1-\beta_2)/2\pi, & \text{if} \ z \cdot \mathbf{n} \geq 0 \\
g(x,y,z) &= \beta_2/4\pi, & \text{otherwise},
\end{align*}
where $\mathbf{n} = (0.2, 0.2, 1)$, $\beta_1 = 0.3$, $\beta_2 = 0.1$ and $\alpha_1 = 0.607788$ and the extended density functions are given by Equation~\eqref{eq:densities}. The target mass density in this example is discontinuous and the set of discontinuity has been chosen to not align with the computational grid $\mathcal{T}^h_\epsilon$. This is an important and difficult test example since the target density, while bounded away from zero, is not Lipschitz. Figure~\ref{fig:tilted} shows the source and target densities and the resulting quiver plot of the direction of the gradient of the potential function. Since we are performing a computation with the squared geodesic cost function, the gradient of the potential function around where the source mass is concentrated (middle of the Pacific Ocean) should be pointing to the northeast, but fanning out, so the mass gets appropriately spread out. This is, in fact, what we see from Figure~\ref{fig:tilted}.

\begin{figure}
	\subfigure[]{\includegraphics[width=6cm]{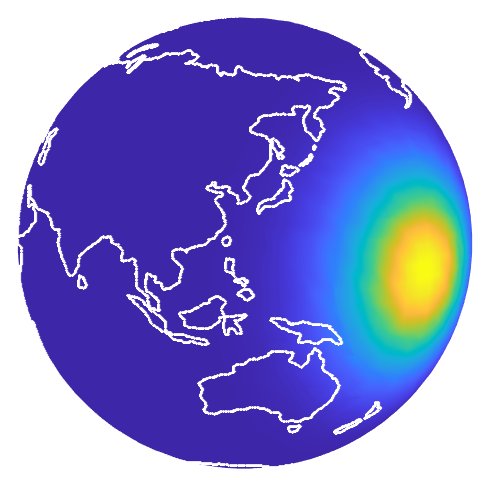}\label{fig:tiltf}}
	\subfigure[]{\includegraphics[width=6cm]{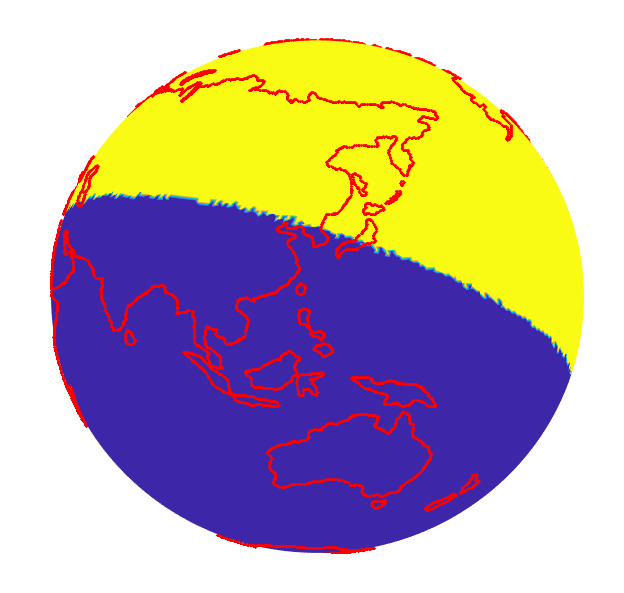}\label{fig:tiltg}}
    \subfigure[]{\includegraphics[width=0.8\textwidth]{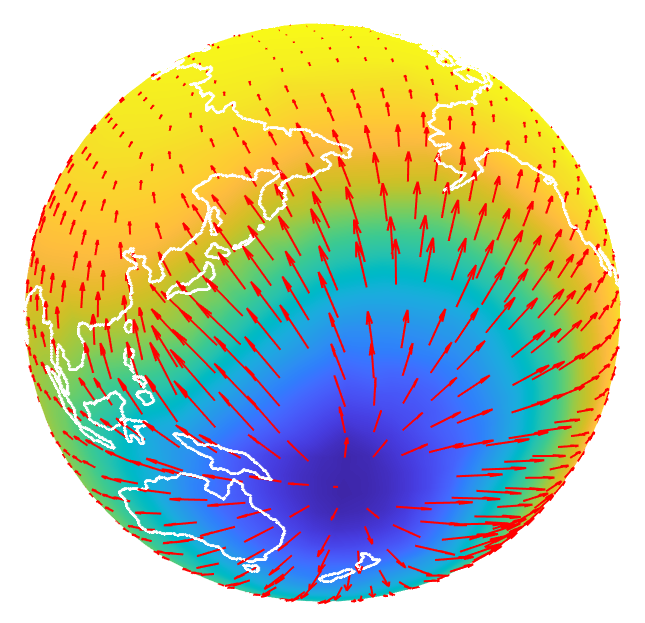}\label{fig:tiltmap}}
	\caption{ (a) Source density, (b) target density and (c) the potential function with its gradient (red arrows) showing mass moving from the source density in the middle of the Pacific Ocean to the half of the globe.}\label{fig:tilted}
\end{figure}

\subsubsection{Reflector antenna design using a hemisphere}

As outlined in Section~\ref{sec:boundary}, we can use a modification of Algorithm~\ref{alg:solver} to solve Optimal Transport on the northern hemisphere of $\mathbb{S}^2$, which is a surface with boundary, provided that we augment the discretization with zero-Neumann boundary conditions on the extension of the boundary. We use the density functions:
\begin{align}
    f(x,y,z) &= \frac{1+10 \arccos(z)}{2 \pi(1+a_0)}, \ \ \ &z \geq 0, \\
    g(x,y,z) &= \frac{1}{2 \pi}, \ \ \ &z \geq 0.
\end{align}

For this computation, we use $\sigma=8$, $\epsilon=0.2$ and $h=0.05$. The resulting grid has $N=24208$ points. We perform this computation using the logarithmic cost function $c(x,y) = -\log(1-x \cdot y)$, since this corresponds to a more physically accurate computation of the reflector antenna problem, as the computation is only for a light intensity pattern on one hemisphere. Even though, strictly speaking, a reflector antenna computation must be done over the entire sphere, we model a source light intensity pattern in the northern hemisphere being reflected to a target light intensity pattern in the southern hemisphere via a computation using the logarithmic cost function $c(x,y) = -\log(1-x \cdot y)$ only in the northern hemisphere.

The results of the computation are shown in Figure~\ref{fig:boundarycomputation}, which has a gradient of approximately zero along the boundary (strictly speaking the gradient is zero for the ghost points $\mathcal{C}^h$ which are located a distance $h$ below the boundary). The source mass, which is concentrated more around the equator, moves toward the north pole, as expected.

\begin{figure}
	\subfigure[]{\includegraphics[width=0.495\textwidth]{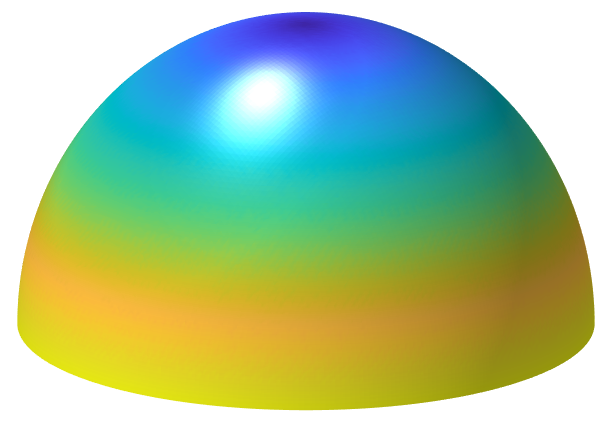}}
    \subfigure[]{\includegraphics[width=0.49\textwidth]{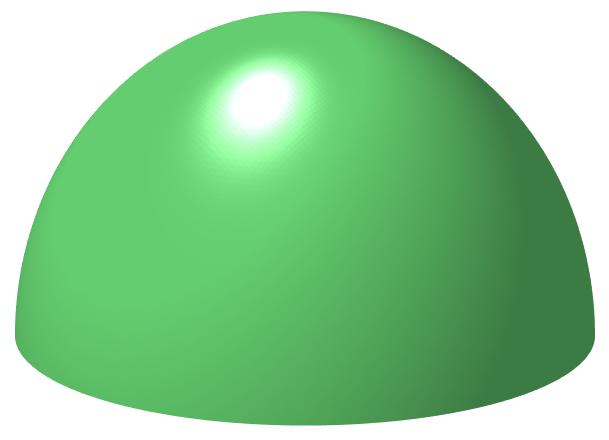}}
    \subfigure[]{\includegraphics[width=0.8\textwidth]{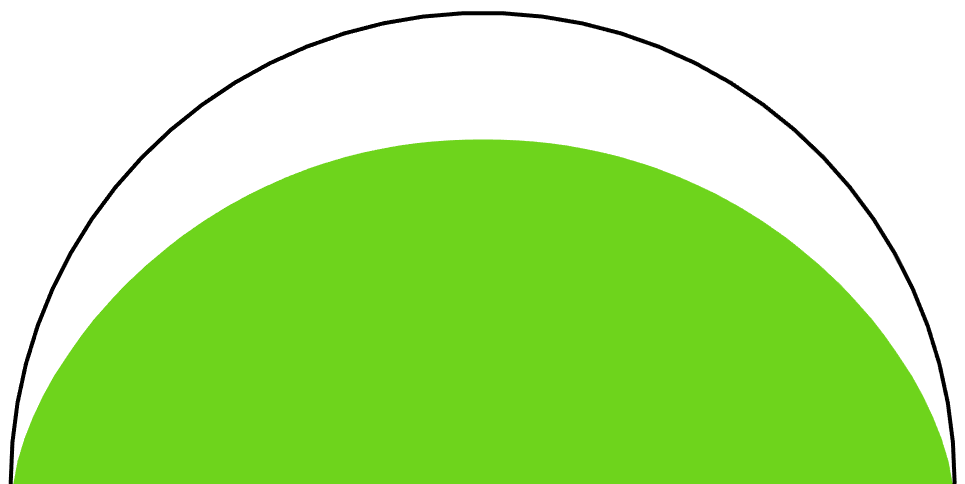}}
	\caption{(a) Source mass, which is concentrated near the boundary gets sent to the (b) target mass, which is constant over the hemisphere. (c) The reflector, side view, which resembles a flattened hemisphere, is able to redistribute the brighter light intensity along the equator to a constant light intensity in the southern hemisphere. A true hemisphere (which would lead to an Optimal Transport mapping that is the identity map) is shown with a black line. }\label{fig:boundarycomputation}
\end{figure}

\subsubsection{Optimal Transport computation on a torus}\label{sec:torus}
We consider the torus parametrized as follows:
\begin{align}\label{eq:torusparametrization}
    x &= \left(c+a \cos v \right) \cos u, \\
    y &= \left(c+a \cos v \right) \sin u, \\
    z &= a \sin u,
\end{align}
for $u, v \in [0, 2\pi)$ and $a=0.65$ and $c=1.3$. 

The signed distance function $\phi_{\Gamma}$ is given by the explicit formula:
\begin{equation}
    \phi_{\Gamma}(\mathbf{z}) = \sqrt{\left(\sqrt{x^2+y^2}-c \right)^2 + z^2}-a,
\end{equation}
and the projection operator $P_{\Gamma}$ is defined by the following map:
\begin{equation}
(x, y, z) \mapsto \left( \left( c + \frac{a}{\rho}(r-c) \right) \frac{x}{r}, \left( c + \frac{a}{\rho} (r-c) \right) \frac{y}{r}, \frac{a}{\rho}z \right).
\end{equation}

We apply Algorithm~\ref{alg:solver}, with $\epsilon=0.2$, $h=0.05$, and $\sigma=8$, to solve the resulting discrete equations. The resulting grid has $N=106304$ points.

For cost functions that do not require pre-computation, the runtime of Algorithm~\ref{alg:solver} is similar for that on the sphere $\mathbb{S}^2$. We will use the following cost function, since it does not require the expensive computation of geodesics on $\Gamma$:
\begin{equation}
c_{\sigma}(\mathbf{z}_1, \mathbf{z}_2) = \frac{1}{2} \left\Vert P_{\Gamma} (\mathbf{z}_1)-P_{\Gamma}(\mathbf{z}_2) \right\Vert^2+ \frac{\sigma}{2} \left( \phi_{\Gamma}(\mathbf{z}_1) - \phi_{\Gamma}(\mathbf{z}_2) \right)^2.
\end{equation}

On a torus, given an appropriate potential function $u$, the mapping for such a cost function satisfies:
\begin{equation}
    \mathbf{m}(\mathbf{x}) = \mathbf{x} + \nabla u(\mathbf{x}) + \alpha \mathbf{n}(\mathbf{x}),
\end{equation}
where $\alpha$ is the smallest value for which $\mathbf{m}(\mathbf{x}) \in \Gamma$. As this shows, the problem with such a cost function, though cheap, is that the source and target masses cannot require the mapping to transport mass too far, see~\cite{TurnquistLens} for more examples of cost functions that do not allow mass to be transported beyond a certain geodesic distance. 

The source and target densities we use for this computation are:
\begin{align}
    f(x,y,z) &= \frac{z+a+a_0}{4 \pi^2 a c (a+a_0)}, \\
    g(x,y,z) &= \frac{1}{4 \pi^2 a c},
\end{align}
for $(x,y,z) \in \Gamma$, where $a_0 = 0.5$. The source and target masses and computed potential function are shown in Figure~\ref{fig:torus}.

\begin{figure}
	\subfigure[]{\includegraphics[width=0.48\textwidth]{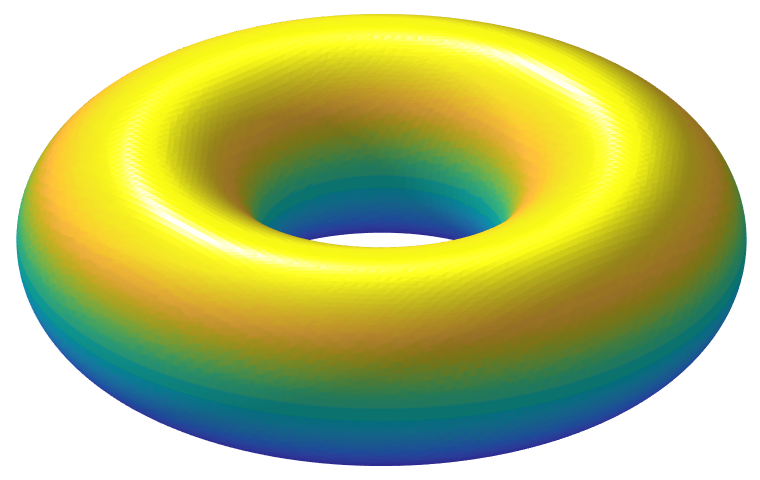}}\label{fig:torusf2}
    \subfigure[]{\includegraphics[width=0.495\textwidth]{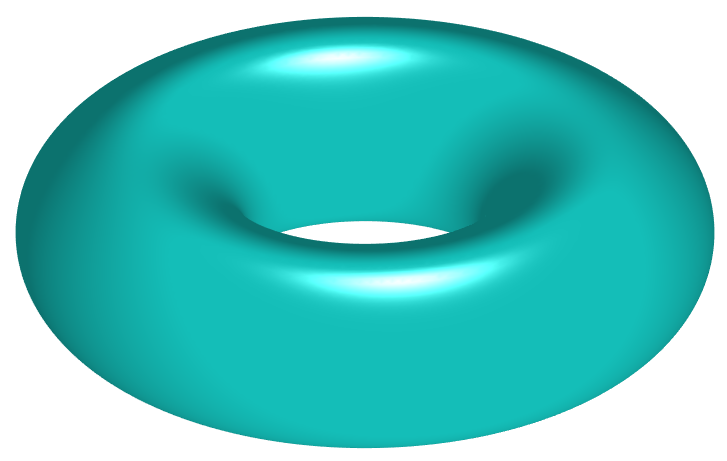}}\label{fig:torusg}
    \subfigure[]{\includegraphics[width=0.8\textwidth]{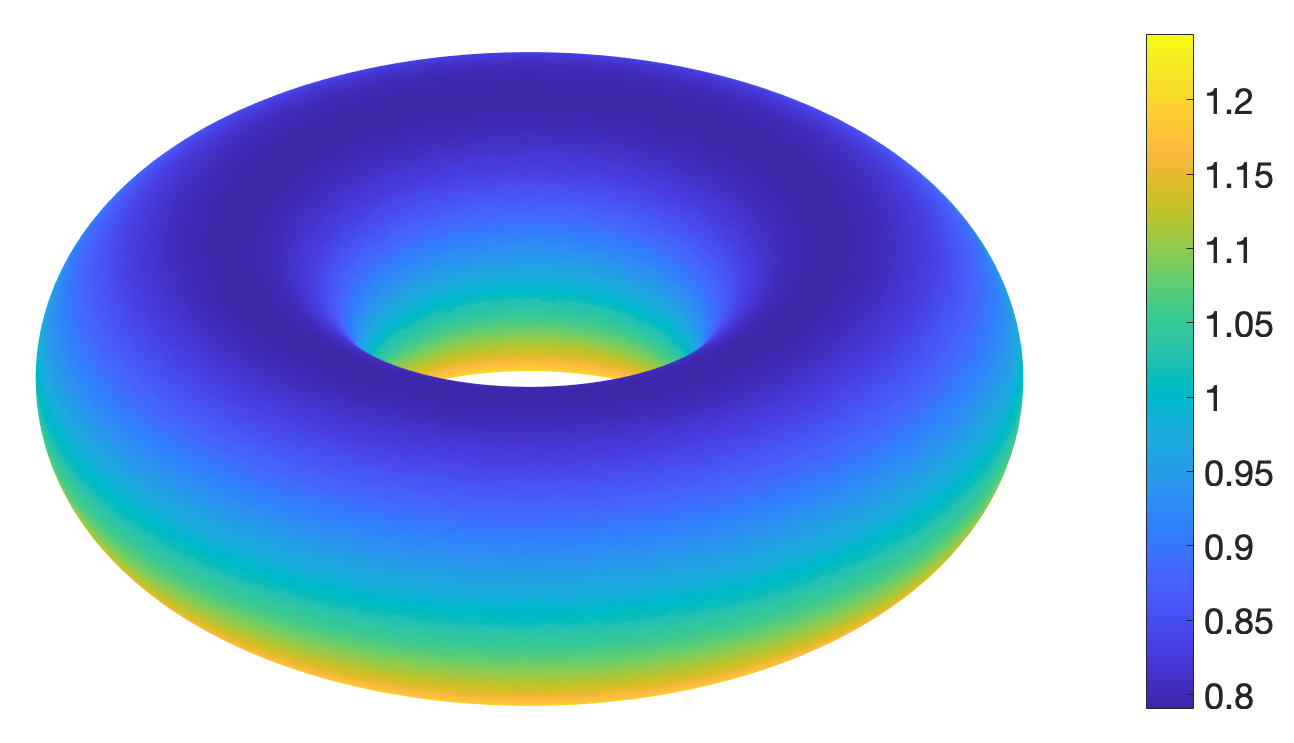}}\label{fig:torusu2}
	\caption{(a) Source mass, which is concentrated near the top of the torus gets sent to (b) the target mass, which is constant  over the entire torus, and (c) the computed potential function.}\label{fig:torus}
\end{figure}

The use of this cost function may be an efficient expedient for moving mesh methods on the torus, since any cost function used for Optimal Transport computations that yields a pushforward mapping is valid to use, see~\cite{Turnquist} for an overview of moving mesh methods using Optimal Transport. We implement a moving mesh computation on the torus for $a_0=1$, where transported mesh is not necessarily symmetric to the grid due to the computational grid being generated differently than the mesh. The results are shown in Figure~\ref{fig:torus2}, which demonstrate that the density of mesh points on the top of the torus is decreasing, while the density of mesh points on the bottom of the torus is increasing. Also, the mesh does not exhibit tangling while keeping the same connectivity, another beneficial aspect of using Optimal Transport for moving mesh methods.

\begin{figure}
	\subfigure[]{\includegraphics[width=0.48\textwidth]{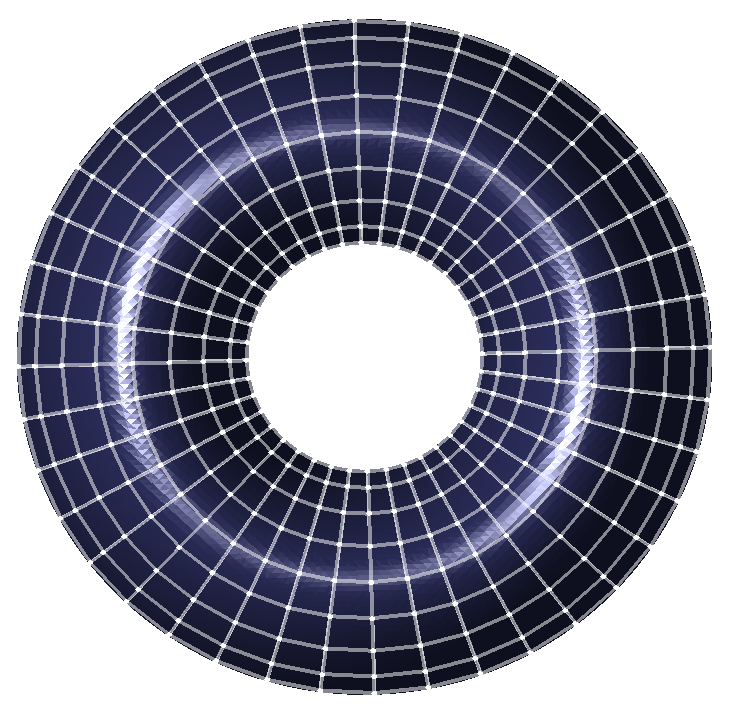}}\label{fig:beforetop}
    \subfigure[]{\includegraphics[width=0.495\textwidth]{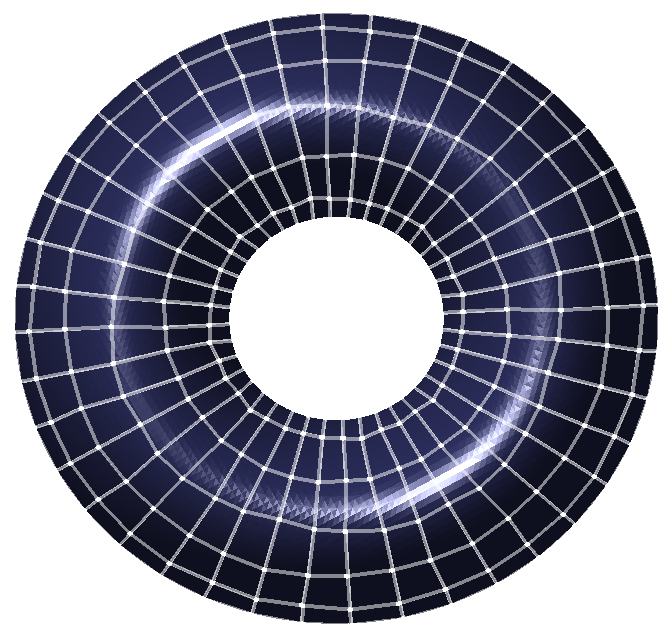}}\label{fig:aftertop}
    \subfigure[]{\includegraphics[width=0.48\textwidth]{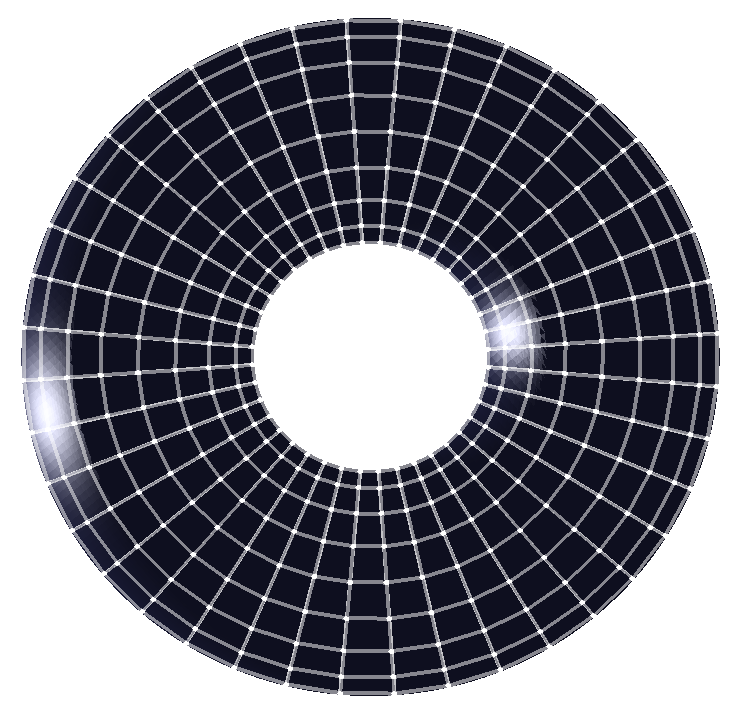}}\label{fig:beforebottom}
    \subfigure[]{\includegraphics[width=0.495\textwidth]{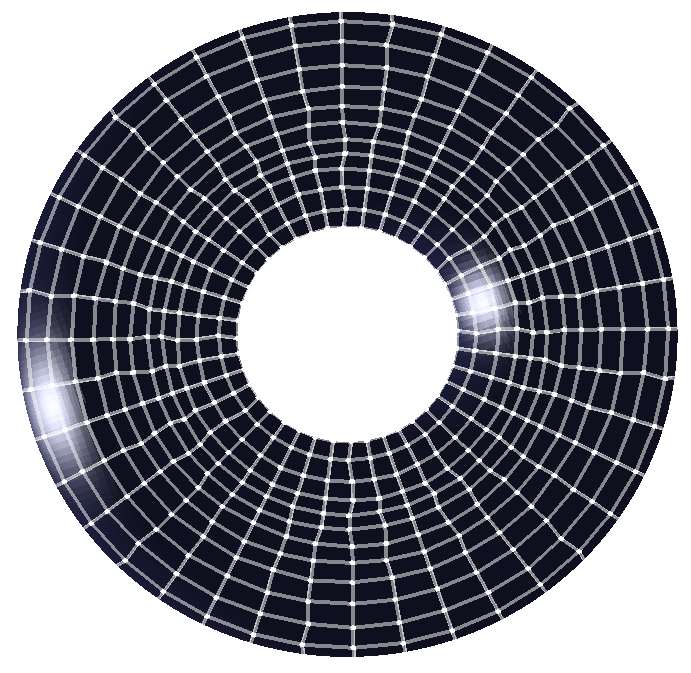}}\label{fig:afterbottom}
	\caption{(a) The mesh on the top of the torus decreases in density as seen in (b) the top of the torus after the mesh is moved via the pushforward map. Meanwhile, (c) the mesh on the bottom of the torus increases in density as seen in (d) the bottom of the torus after the mesh is moved via the pushforward map. The mesh does not tangle after the vertices are transported via the pushforward map.}\label{fig:torus2}
\end{figure}

\section{Conclusion}\label{sec:conclusion}
In this article, we have extended the Optimal Transport problem on a compact surface $\Gamma \subset \mathbb{R}^3$ onto a thin tubular neighborhood $T_{\epsilon}$ with width $\epsilon$. We showed how one can then compute the Optimal Transport mapping $\mathbf{m}$ for the Optimal Transport problem on $\Gamma$ by solving instead for the Optimal Transport mapping $\bar{\mathbf{m}}$ which is the solution to the extended Optimal Transport problem on $T_{\epsilon}$. The key is to extend the density functions and cost function in an appropriate way. The primary benefit of this extension is that the PDE formulation of the Optimal Transport problem on $T_{\epsilon}$ has only Euclidean derivatives. This allows us the flexibility to design a simple discretization that uses a Cartesian grid. We have discretized the extended PDE formulation of the Optimal Transport problem on $T_{\epsilon}$ and shown its ease of implementation and success with two different cost functions on the sphere, on the northern hemisphere of the sphere, where additional boundary conditions had to be implemented, and also on the $2$-torus.

\section*{Acknowledgment}
Tsai's research is supported partially by National Science Foundation Grants DMS-2110895 and DMS-2208504.

{\appendix
\section{Axisymmetric Potential Functions on the unit sphere}\label{sec:exact}


Axisymmetric potential functions $u$ of the Optimal Transport PDE on the sphere can be generated for both the squared geodesic $c(x,y) = \frac{1}{2}d_{\mathbb{S}^2}(x,y)^2$ and logarithmic cost $c(x,y) = -\log(1-x \cdot y)$ by specifying particular axisymmetric source $f$ and target $g$ density functions. For the squared geodesic cost, given that the potential function is axisymmetric, that is, if $u(\theta, \phi) = u(\theta)$, then the Optimal Transport mapping satisfies $T(\theta, \phi) = \left( \theta' , \phi \right)$. Using this, one can compute the determinant of the Jacobian of the Optimal Transport mapping $T$ for the squared geodesic cost as follows:

\begin{equation}
F(\theta):= \left\vert DT \right\vert = \left\vert \frac{\sin \theta '}{\sin \theta} \right\vert \left\vert \frac{d \theta '}{d \theta} \right\vert.
\end{equation}

For the squared geodesic cost function $c(x,y) = \frac{1}{2}d_{\mathbb{S}^2}(x,y)^2$, $\theta ' = \theta + \frac{du}{d \theta}$, and $\theta ' \in [0, \pi]$ and therefore, we derive:

\begin{equation}\label{eq:axisymmetric}
F(\theta) = \frac{\sin \left( \theta + \frac{du}{d \theta} \right)}{\sin \theta} \left\vert \frac{d}{d\theta} \left( \theta + \frac{du}{d \theta} \right) \right\vert= \frac{\sin \left( \theta + \frac{du}{d \theta} \right)}{\sin \theta} \left\vert 1 + \frac{d^2 u}{d \theta^2}  \right\vert.
\end{equation}

In contrast to the derivation in~\cite{scalingskewness}, we are concerned not with the so-called ``monitor function", but instead with producing the density functions $f$ and $g$ that satisfy $f(\theta, \phi) = f(\theta)$ and $g(\theta, \phi) = g(\theta)$. Furthermore, the measure-preserving property implies that $f$ and $g$ must satisfy the following Jacobian equation:

\begin{equation}\label{eq:jac2}
f(\theta) = F(\theta)g \left(\theta + \frac{du}{d \theta} \right).
\end{equation}

To generate simple tests, let us take the target density to be constant, i.e. $g = 1/4\pi$, and we will solve for the source density function $f$ given a known smooth axisymmetric potential function $u$. With a constant target density, the source density function $f$ must satisfy the following:

\begin{equation}
f(\theta) = \frac{F(\theta)}{4 \pi}.
\end{equation}

Now, given the axisymmetric potential function $u = \frac{\cos \theta}{a_0}$, where $a_0 > 1$, we get:


\begin{equation}
f \left(\theta \right) = \frac{\sin \left( \theta - \frac{\sin \theta}{a_0} \right)}{4 \pi \sin \theta} \left( 1 - \frac{\cos \theta}{a_0} \right).
\end{equation}

This integrates to $1$ over $\mathbb{S}^2$, by Equation~\eqref{eq:jac2}. Figure~\ref{fig:exact1} shows what this density function looks like for $a_0 = 3$.

\begin{figure}[htp]
	\includegraphics[width=0.8\textwidth]{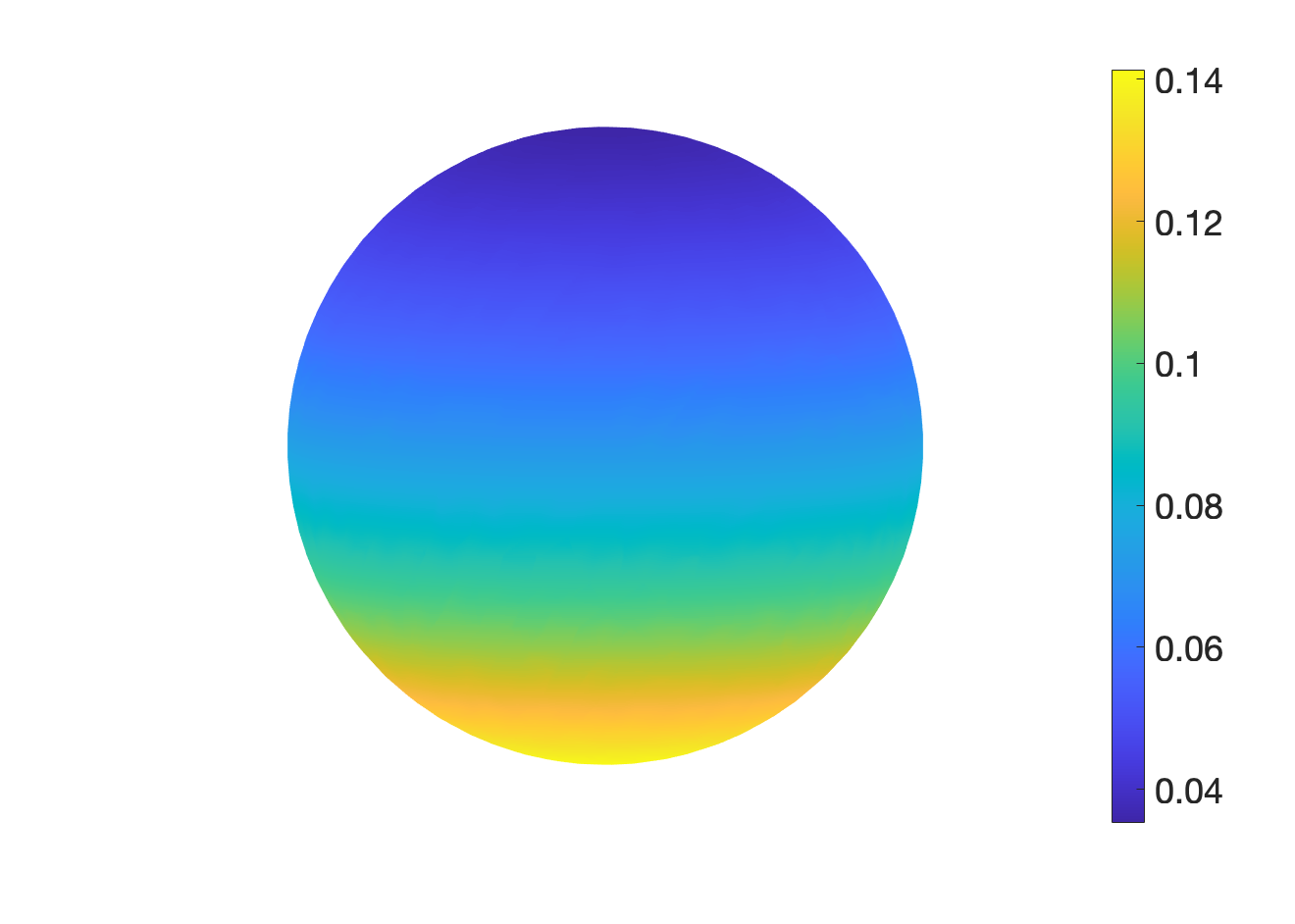}
	\caption{Source density $f$ that leads to a potential function $u(\theta) = \cos(\theta)/3$ for a constant target density $g = 1/4\pi$ for the squared geodesic cost function.}\label{fig:exact1}
\end{figure}

The logarithmic cost is a little trickier. For example, ``flat" potential functions (flat with respect to the standard metric on the sphere) lead to an Optimal Transport mapping $T$ which satisfies $T(\mathbf{x}) = -\mathbf{x}$. For an axisymmetric potential function, such a mapping will satisfy $T(\theta, \phi) = (\theta ', 2 \pi - \phi)$. However, since our densities and potential function will be axisymmetric, we have $T(\theta, 2 \pi - \phi) = (\theta ', \phi)$, so without loss of generality we may just compute $\theta '$ for the Jacobian and not worry about what happens to the aximuthal coordinate $\phi$. For the logarithmic cost, we get $\theta ' = \pi-\theta - \text{sgn} \left( \frac{du}{d\theta} \right) \arccos R \left( \frac{du}{d\theta} \right)$, where $R(\xi) = \frac{\xi^2-1}{\xi^2+1}$. In the axisymmetric case, we either have $\text{sgn} \left( \frac{du}{d\theta} \right) = \pm 1$, since otherwise the mapping would map two or more points to the same point. The expression for the Jacobian is therefore:

\begin{equation}\label{eq:axisymmetric2}
F(\theta) = \frac{\sin \left(\pi-\theta + \text{sgn} \left( \frac{du}{d\theta} \right) \arccos R\left( \frac{du}{d \theta} \right) \right)}{\sin \theta} \left(-1 + \text{sgn}\left( \frac{du}{d \theta} \right) \frac{d}{d \theta} \arccos R\left( \frac{du}{d \theta} \right) \right).
\end{equation}

For the axisymmetric potential function $u = \cos \theta / a_0$, so $\text{sgn} \left( \frac{du}{d\theta} \right) = -1$, and thus, we compute:
\begin{equation}
R \left(\frac{du}{d \theta} \right) = \frac{\sin^2 \theta-a_0}{\sin^2 \theta+a_0},
\end{equation}
and
\begin{equation}\label{eq:axisymmetric2}
f(\theta) = \frac{\sin \left(\pi-\theta - \arccos \left( \frac{\sin^2 \theta - a_0^2}{\sin^2 \theta + a_0^2} \right) \right) }{4 \pi\sin \theta} \left(-1 + \frac{2 a_0\cos \theta}{\sin^2 \theta + a_0^2}   \right).
\end{equation}

Figure~\ref{fig:exact2} shows what this density function looks like for $a_0 = 3$.

\begin{figure}[htp]
	\includegraphics[width=0.8\textwidth]{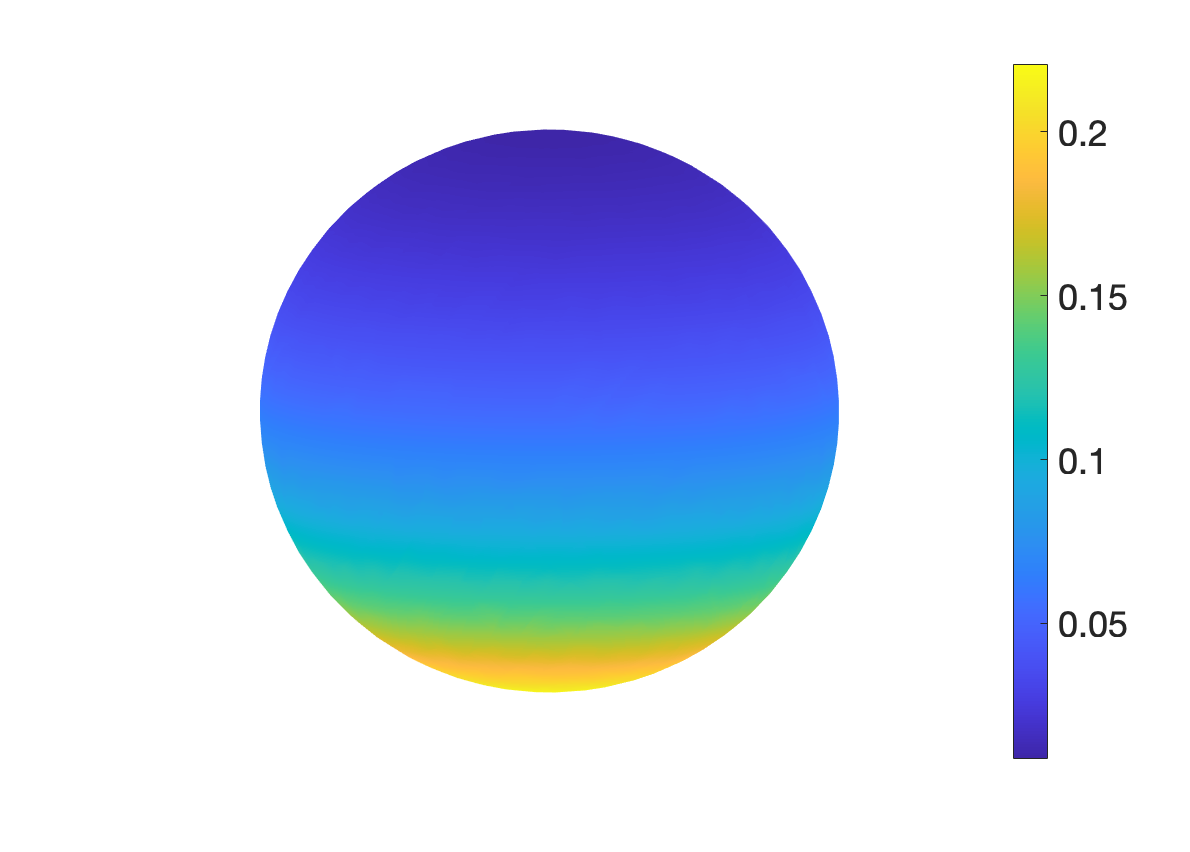}
	\caption{Source density $f$ that leads to a potential function $u(\theta) = \cos(\theta)/3$ for a constant target density $g = 1/4\pi$ for the logarithmic cost function.}\label{fig:exact2}
\end{figure}
}

\bibliographystyle{abbrv}
\bibliography{ShellProject}


\end{document}